\newtheoremstyle{mystyle}{}{}{\rmfamily}%
{}{\normalfont\bfseries}{ }{ }{}
\newtheorem{theorem}{Theorem}[section]
\newtheorem{lemma}[theorem]{Lemma}
\newtheorem{definition}[theorem]{Definition}
\newtheorem*{claim}{Claim}
\newtheorem{corollary}[theorem]{Corollary}
\newtheorem{proposition}[theorem]{Proposition}
\newtheorem{remark}[theorem]{Remark}
\theoremstyle{mystyle}
\newtheorem{question}[theorem]{Question}
\newtheorem{example}[theorem]{\bf Example}
\title{
Quasi-rigid operators and  hyper-recurrence
}
\author[1]{Manuel Saavedra}
\author[2]{Manuel Stadlbauer}
\affil[1,2]{Instituto de Matemática, Universidade Federal do Rio de Janeiro, RJ, Brazil.}
\affil[1]{\texttt{saavmath@im.ufrj.br}}
\affil[2]{\texttt{manuel@im.ufrj.br}}
\date{}
\setlist{itemsep=-1mm}
\setlist[enumerate]{label = (\arabic*),
ref = (\arabic*)}
\begin{document}
\maketitle

\setlength{\headheight}{14.49998pt}

\begin{abstract}
 \noindent We study recurrent operators from a new perspective by introducing the notion of hyper-recurrent operators and establish robust connections with quasi-rigid operators. For example, we prove that a recurrent operator on a separable Banach space is quasi-rigid if and only if it is a linear factor of a hyper-recurrent operator, and show that the quasi-rigid operators found in Costakis, Manoussos and Parissis's work, along with many others, are, in fact, hyper-recurrent operators. Furthermore, we provide a negative answer, using a class of operators introduced by Tapia, to the question by Costakis et al. whether  $T \oplus T$ is recurrent whenever $T$ is.

 \medskip
\noindent\textbf{Keywords:} linear dynamical system, recurrent operator, quasi-rigid operator\\
\noindent\textbf{2020 MSC:} 47A16, 37B20

 \end{abstract}


\section{Introduction} In this note, we discuss recurrence properties of a continuous operator $T$ acting on a Fréchet or Banach space $X$ by analysing the asymptotic behaviour along subsequences $(\omega_n)$ such that $\lim_n T^{\omega_n}(x) = x$ for some $x\in X$. In order to do so, we begin with recalling the notions of \emph{recurrence}, \emph{rigidity} and \emph{quasi-rigidity}. That is, $T$ is referred to as recurrent if there exists a dense subset $\Lambda$ of $X$ such that each element of this set is an accumulation point of its forward orbit. This implies that for each $x$ in this dense subset, there exists
a subsequence $(\omega_n)$ such that $\lim_n T^{\omega_n}(x) = x$. Furthermore, if this subsequence can be chosen independent of $x \in \Lambda$, then the operator is called \emph{quasi-rigid}. Or in other words, there exists a subsequence $(\omega_n)$ such that $\lim_n T^{\omega_n}(x) = x$ for all $x \in \Lambda$. If $\Lambda =X$, then a quasi-rigid operator is referred to as \emph{rigid}.

We now change our point of view and consider, for a given strictly increasing sequence $\omega$, the set $\mathfrak{L}(\omega) := \{ x: \lim_n T^{\omega_n}(x) = x\}$. In analogy to hypercyclic operators, we introduce the notion of \emph{hyper-recurrent} operators as follows. We refer to $T$ as hyper-recurrent if there exists a recurrent vector $x\in X$ such that $x \in \mathfrak{L}(\omega)$ implies that $\overline{\mathfrak{L}(\omega)} = X$. Or in other words, recurrence in $x$ implies convergence to the identity on a dense set with respect to the same sequence.  As it easily can  be seen,
\[
\hbox{hypercyclic} \Rightarrow \hbox{hyper-recurrent} \Rightarrow \hbox{quasi-rigid}  \Rightarrow  \hbox{recurrent}.
\]
The aim of this note is to study hyper-recurrence and, in particular, analyse the gaps between recurrence, quasi-rigidity and hyper-recurrence. To start, we remark that the these implications are strict: for the first, it suffices to consider the identity operator whereas Propositions \ref{T2}, \ref{RnH} and \ref{prop:quasi-rigid and not hyper-recurrent} below provide negative answers to the remaining ones.

The recent interest in recurrent operators probably was initiated by the publications \cite{Cos, Cos2} by Costakis, Manoussos and Parissis. In there, the authors provide a systematic study of the spectral properties of recurrent and rigid operators and obtained characterizations of relevant classes of recurrent operators on classical Fréchet and Banach spaces like weighted shifts, composition operators, multiplication operators, power-bounded operators (for the definition, see Section \ref{sec:Preliminaries}) and operators on finite dimensional spaces. Furthermore, \cite{Cos} contains several open questions, which are now discussed by the community.

For example, Mazet and Saias recently constructed a class of rigid operators in \cite{Mazet}, which are either non-invertible or have a non-rigid inverse. In particular, this class provides a negative answer to \textcolor{black}{two} of the questions in \cite{Cos}. However, the main focus of research of the last decade was put on hypercyclic vectors and their relation to Devaney chaos, i.e. the density of periodic points. A recent breakthrough in this direction is due to Menet (\cite{Menet--Linear-Chaos-And-Frequent--TAMS-2017}) who showed that a Devaney chaotic operator is reiteratively hypercyclic but not necessarily frequently hypercyclic
(for the definitions, see  \cite{Menet--Linear-Chaos-And-Frequent--TAMS-2017}).

In here, our focus is on recurrence instead of hypercyclicity. Motivated by the successful analysis of hypercyclic operators in terms of frequent, U-frequent or reiterative hypercyclicity (see, e.g., \cite{Gri,Fhyp,Menet--Linear-Chaos-And-Frequent--TAMS-2017}), these concepts recently also were transferred to recurrent, but not necessarily hypercyclic operators. That is, one refers to an operator $T$ as \emph{reiteratively/ U-frequently/ frequently recurrent} if the set
$ \{ n  \in \mathbb{N}:  T^n x \in A \} $
has a strictly positive upper Banach density/ upper density/ lower-density (\cite{Cos2, Boni, Grivaux, Sophie}) for $x$ in a dense set and any neighbourhood $U$ of $x$. For example, Costakis, Manoussos and Parissis related U-frequent recurrence to multiple recurrence (\cite{Cos2}), and Grivaux and López-Martínez showed in \cite{Grivaux} that these recurrence properties coincide in case of a reflexive Banach space. Furthermore, the notion of subspace hypercyclicity, introduced by in \cite{Madore}, recently was carried to recurrent operators in \cite{Moos}.

However, as one might see from the definition of hyper-recurrence, our approach is different as we are asking not for rates of returns but for a qualitative, global behaviour. That is, if $x$ is hyper-recurrent and $T^{\omega_n}x \to x$, then $T^{\omega_n}$ converges to the identity on a dense set.

The paper is structured as follows. In Section \ref{sec:Preliminaries} we recall underlying concepts and collect some basic results on the set of recurrent points. In Section \ref{sec:quasi-rigid}, we then show that quasi-rigidity is equivalent to $\bigoplus_{i=1}^n T: X^n \to X^n$ being recurrent for any $n \in \mathbb{N}$ (Theorem \ref{equiv}) and use this characterization and a construction by Augé (\cite{Auge}) and Tapia (\cite{Tapia}) in order to construct a recurrent operator which is not rigid (Theorem \ref{ans-neg} and Proposition \ref{T2}). This, in particular, answers question 9.6 in \cite{Cos}. Here, we would like to point out that the same results was independently obtained by Grivaux, López-Martínez and Peris in the preprint \cite{Sophie} by a different method.

In Section \ref{sec:hyper}, we then, among other things, introduce the notion of hyper-recurrence, give examples of quasi-rigid operators which are not hyper-recurrent (Propositions \ref{RnH} and \ref{prop:quasi-rigid and not hyper-recurrent}) and show in Theorem \ref{theo:factor} that an operator on a Fréchet space is quasi-rigid if and only if it is a factor of a hyper-recurrent one. Section \ref{sec:exploring} then is devoted to a discussion of hyper-recurrence for certain classes of operators.
In particular, we show that recurrence implies  hyper-recurrence in the following cases.
\begin{enumerate}[label=(\alph*)]
	\item Power bounded operators on a separable complex Hilbert space (Theorem \ref{Hr-H}).
	\item Multiplication operators on a unital Banach algebra (Theorem \ref{multi}), Banach spaces of holomorphic functions (Theorem \ref{Banach-holo}), or $L^p(\nu)$ with respect to a $\sigma$-finite measure  (Theorem \ref{theo:mult}).
	\item Invertible operators with discrete spectrum acting on a {separable complex} Fréchet space  (Theorem \ref{disc-esp}), and operators with a dense set of periodic vectors on a Fréchet space (Theorem \ref{per-dense}).
	\item Composition operators on $\mathrm{H}(\mathbb{C})$, $\mathrm{H}(\mathbb{C}^{*})$ $\mathrm{H}(\mathbb{D})$, the Hardy space $\mathrm{H}^{2}(\mathbb{D})$, the Weighted Hardy space of entire functions and the Hardy space of Dirichlet series (Theorem \ref{composition}).
\end{enumerate}

\section{Preliminaries} \label{sec:Preliminaries}

We now introduce some basic notions of recurrence. In order to do so, fix a continuous operator $T$ on a Fréchet space $X$. Then $x \in X$ is referred to as  recurrent if there exists a strictly increasing sequence $(\omega_n)$ such that $\lim_{n \to \infty} T^{\omega_n}(x)=x$.  Furthermore, we refer to $\mathrm{Rec}(T)$ as the set of all recurrent vectors, and recall that $T$ is a recurrent operator if $\overline{\mathrm{Rec}(T)}=X$.
In order to study recurrent operators in more detail, we consider the collection $\mathfrak{C}$ of sequences in $\mathbb{N}$ defined by
\begin{eqnarray*}
\mathfrak{C} : = \left\{(\omega_{n})_{n\in \mathbb{N}}: \omega_{n}\uparrow \infty \hbox{ and there is }\; x\in X\setminus\{0\}\; \hbox{with}\; \lim_{n\to \infty} T^{\omega_{n}}x =  x
\right\},
\end{eqnarray*}
and observe that $\mathfrak{C}$ is always non-trivial if $T$ is recurrent. Also note that the shift map $\sigma: (\omega_1,\omega_2,\ldots) \mapsto (\omega_2,\omega_3,\ldots)$ acts on $\mathfrak{C}$.
For $\omega=(\omega_{n})_{n\in \mathbb{N}}\in \mathfrak{C}$, set
\begin{eqnarray} \label{def:limit-set-of-omega}
\mathfrak{L}(\omega) \equiv \mathfrak{L}_T(\omega):=\{x\in X: \lim_{n\rightarrow \infty} T^{\omega_{n}}x=x\}.
\end{eqnarray}

We now collect some immediate consequences of these definitions. Note that $T(\mathfrak{L}(\omega)) \subset \mathfrak{L}(\omega)$ and that $\mathrm{Rec}(T)=\bigcup_{\omega\in \mathfrak{C}}\mathfrak{L}(\omega)$. Moreover, $\mathfrak{L}(\omega)$ and $\mathrm{Rec}(T)$ are $T$-invariant subsets of $X$ and $T$ acts injectively on them. However, if $\mathrm{Rec}(T)=X$ then $T$ is not necessarily invertible (see \cite{Mazet}).

An important class of operators in the context of recurrent operators is defined as follows (for more details, see Subsection \ref{subsec:pbo} below).

\begin{definition} \label{def:power-bounded}
We refer to $T$ as \emph{power-bounded} if for all $x \in X$, the orbit $\mathcal{O}_x = \{T^n(x): n \in \mathbb{N}\}$ is van Neumann-bounded in $X$. Or, in other words, for each neighborhood $U$ of $0$, there exists a positive number $r$ such that for all $z \in \mathbb{C}$ with $|z|\geq r$, $\mathcal{O}_x \subset zU$.
\end{definition}
Furthermore, in the context of bounded operators on Banach spaces, $T$ is power-bounded if and only if $\sup_n \|T^n\| < \infty$.
We now collect some basic facts about recurrent operators (for more results on power-bounded operators, see Subsection \ref{subsec:pbo} below).

\begin{proposition}\label{esp}
Let $T$ be a recurrent operator on a Fréchet space $X$. Then $\mathfrak{L}(\omega)$  and $\overline{\mathfrak{L}(\omega)}$ are  $T$-invariant linear subspaces,  $\mathfrak{L}(\sigma(\omega))=\mathfrak{L}(\omega)$ and  $\mathfrak{L}(\omega) \subset \mathfrak{L}(\mu)$ for any subsequence $\mu$ of  $\omega\in \mathfrak{C}$.
 Moreover, if $T$ is power-bounded, then $\hbox{Rec}(T)=X$ and $\mathfrak{L}(\omega)$ is closed for any $\omega\in \mathfrak{C}$.
\end{proposition}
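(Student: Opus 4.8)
The plan is to obtain the first three assertions by elementary linearity and continuity arguments, and to isolate the power-bounded case, where the actual content resides. For the linear-subspace and invariance claims, note that if $x,y\in\mathfrak{L}(\omega)$ and $\alpha,\beta$ are scalars, then $T^{\omega_n}(\alpha x+\beta y)=\alpha T^{\omega_n}x+\beta T^{\omega_n}y\to\alpha x+\beta y$ by linearity of each $T^{\omega_n}$ and continuity of the vector-space operations, so $\mathfrak{L}(\omega)$ is a linear subspace, and hence so is its closure; likewise $T^{\omega_n}(Tx)=T(T^{\omega_n}x)\to Tx$ by continuity of $T$, giving $T(\mathfrak{L}(\omega))\subset\mathfrak{L}(\omega)$, and taking closures yields $T(\overline{\mathfrak{L}(\omega)})\subset\overline{\mathfrak{L}(\omega)}$. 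The equality $\mathfrak{L}(\sigma(\omega))=\mathfrak{L}(\omega)$ is immediate because deleting the first term of a sequence does not affect its convergence, and $\mathfrak{L}(\omega)\subset\mathfrak{L}(\mu)$ for a subsequence $\mu$ of $\omega$ because a convergent sequence converges along every subsequence.

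For the power-bounded case, the key point is that power-boundedness makes $\{T^n:n\in\mathbb{N}\}$ equicontinuous: each orbit is von Neumann-bounded, so the family is pointwise bounded, and the uniform boundedness principle for Fréchet spaces upgrades this to equicontinuity, i.e.\ for every balanced neighbourhood $U$ of $0$ there is a balanced neighbourhood $V$ of $0$ with $T^n(V)\subset U$ for all $n$. Given this, I would prove $\mathfrak{L}(\omega)$ closed by a three-neighbourhood estimate: for $x\in\overline{\mathfrak{L}(\omega)}$ and a target neighbourhood $U$ of $0$, pick a balanced $W$ with $W+W+W\subset U$, then by equicontinuity a balanced $V\subset W$ with $T^n(V)\subset W$ for all $n$, and a point $y\in\mathfrak{L}(\omega)$ with $x-y\in V$; the decomposition $T^{\omega_n}x-x=T^{\omega_n}(x-y)+(T^{\omega_n}y-y)+(y-x)$ then places the first summand in $W$ for all $n$, the second in $W$ for all large $n$ (since $y\in\mathfrak{L}(\omega)$), and the third in $V\subset W$, so $T^{\omega_n}x-x\in U$ eventually.

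Finally, for $\mathrm{Rec}(T)=X$, since $T$ is recurrent the set $\mathrm{Rec}(T)$ is dense, so it suffices to show it is closed; one cannot merely invoke the closedness of the $\mathfrak{L}(\omega)$, as $\mathrm{Rec}(T)=\bigcup_{\omega\in\mathfrak{C}}\mathfrak{L}(\omega)$ is only a union of them, so I would use a diagonal construction. Fix a decreasing balanced neighbourhood basis $(U_k)$ of $0$, choose balanced $W_k$ with $W_k+W_k+W_k\subset U_k$ and balanced $V_k\subset W_k$ with $T^n(V_k)\subset W_k$ for all $n$; given $x\in\overline{\mathrm{Rec}(T)}$, pick $x_k\in\mathrm{Rec}(T)$ with $x-x_k\in V_k$ and then, using that the return sequence of $x_k$ tends to infinity, an index $\omega_k>\omega_{k-1}$ with $T^{\omega_k}x_k-x_k\in W_k$; the same three-term decomposition gives $T^{\omega_k}x-x\in U_k$, whence $x\in\mathrm{Rec}(T)$. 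The main obstacle is exactly this last step — arranging the diagonal sequence $(\omega_k)$ to be strictly increasing while simultaneously controlling all three error terms — but once equicontinuity is in place it is routine, and the remainder is bookkeeping with neighbourhoods.
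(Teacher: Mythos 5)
Your proposal is correct and follows essentially the same route as the paper: the first three claims are dispatched as immediate consequences of linearity and continuity, and the power-bounded case rests on Banach--Steinhaus equicontinuity of $\{T^n\}$ together with the standard three-term decomposition $T^{\omega_n}x-x=T^{\omega_n}(x-y)+(T^{\omega_n}y-y)+(y-x)$. The paper merely outsources these details to Lemma 3.1 of \cite{Cos} (phrased with an invariant metric rather than balanced neighbourhoods), and your diagonal construction for the closedness of $\mathrm{Rec}(T)$ is precisely the argument being cited there.
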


\begin{proof}We only prove the statements on power-bounded operators as the first three are immediate.
As the the proof of Lemma 3.1 in \cite{Cos} applies in verbatim to Fréchet spaces by choosing an invariant metric on $X$, one obtains that $\hbox{Rec}(T)$ always is closed. Hence, $\hbox{Rec}(T)=X$ by recurrence. In order to show that $\mathfrak{L}(\omega)$ is closed in $X$ for any $\omega  \in \mathfrak{C}$, it suffices to employ
the Banach-Steinhaus theorem for Fréchet spaces in order to obtain that $\{T^n\}_{n}$ is equicontinuous.
\end{proof}

Moreover, one should not expect that $\mathfrak{L}(\omega)$ has finite dimension.

\begin{proposition}\label{densidade} Let $T$ be a recurrent operator on an infinite-dimensional  Fréchet space $X$. Then
the set $\{x \in \mathrm{Rec}(T) :  \hbox{if }  x\in \mathfrak{L}(\omega) \hbox{ then }  \mathrm{dim}(\mathfrak{L}(\omega)) = \infty\}$
is dense in $X$.
\end{proposition}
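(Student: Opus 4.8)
The plan is to rewrite $S$ as $\mathrm{Rec}(T)\setminus N$, where
\[
 N \ :=\ \bigcup\bigl\{\mathfrak{L}(\omega) : \omega\in\mathfrak{C},\ \dim\mathfrak{L}(\omega)<\infty\bigr\},
\]
and to show that $N$ is ``small'' while $\mathrm{Rec}(T)$ is ``large''. For the latter, since $X$ carries a complete invariant metric $d$, the set $\mathrm{Rec}(T)=\bigcap_{k}\bigcap_{m}\bigcup_{n\geq m}\{x:d(T^nx,x)<1/k\}$ is a dense $G_\delta$, hence comeager. For the former, I would first analyse a finite-dimensional $\mathfrak{L}(\omega)$: by Proposition~\ref{esp}, $A:=T|_{\mathfrak{L}(\omega)}$ is a linear bijection of a finite-dimensional space with $A^{\omega_n}\to\mathrm{id}$, and since a nontrivial Jordan block or an eigenvalue off the unit circle would prevent such convergence, $A$ must be diagonalizable with unimodular spectrum; moreover a single nonzero vector of $\ker(T-\lambda)$ lying in $\mathfrak{L}(\omega)$ forces $\lambda^{\omega_n}\to1$ and hence $\ker(T-\lambda)\subseteq\mathfrak{L}(\omega)$. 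Thus $\mathfrak{L}(\omega)=\bigoplus_{\lambda\in F}\ker(T-\lambda)$ for a finite set $F$ of unimodular eigenvalues with finite-dimensional eigenspaces, so that
\[
 N\ \subseteq\ W\ :=\ \mathrm{span}\bigl\{v:Tv=\lambda v,\ |\lambda|=1,\ \dim\ker(T-\lambda)<\infty\bigr\}.
\]
I would state this for complex $X$; over $\mathbb{R}$ one argues in the same way, with two-dimensional rotation blocks playing the role of the spaces $\ker(T-\lambda)$.

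Now I would split into two cases. If $\overline{W}\neq X$, then $\overline{W}$ is a proper closed subspace, so $X\setminus\overline{W}$ is open and dense, and $\mathrm{Rec}(T)\cap(X\setminus\overline{W})$ is dense by Baire and contained in $\mathrm{Rec}(T)\setminus N=S$; done. The real work is the case $\overline{W}=X$, in which $W$ is infinite-dimensional and hence $T$ has infinitely many distinct unimodular eigenvalues $\lambda_1,\lambda_2,\dots$ with finite-dimensional eigenspaces; fix nonzero eigenvectors $v_i$. Given a nonempty open $V\ni x_0$, I would approximate $x_0$ by a \emph{finite} eigen-combination $w\in W$ with eigenvalues in a finite set $F_0\subseteq\mathbb{T}$, write $w=\sum_{\lambda\in F_0}u_\lambda$ with $u_\lambda\in\ker(T-\lambda)$, and then set $\tilde w:=w+\sum_{j\geq1}c_jv_{k_j}$, where the $\lambda_{k_j}\notin F_0$ are distinct and the nonzero scalars $c_j$ decay fast enough that the series converges absolutely and $\tilde w\in V\setminus\{0\}$. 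A simultaneous-approximation argument on the torus (for the countable set $F_0\cup\{\lambda_{k_j}:j\}$) gives $\omega_n\uparrow\infty$ with $\mu^{\omega_n}\to1$ for every $\mu$ in that set, and then $T^{\omega_n}\tilde w\to\tilde w$ by dominated convergence, so $\tilde w$ is recurrent.

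It remains to check $\tilde w\notin N$. Suppose $\tilde w\in\mathfrak{L}(\nu)$ with $\dim\mathfrak{L}(\nu)<\infty$; then $q(T)\tilde w=0$ for the nonzero minimal polynomial $q$ of $T|_{\mathfrak{L}(\nu)}$. Applying $T^n$ and an arbitrary $\phi\in X^*$ yields the identity $\sum_{\lambda\in F_0}e_\lambda\lambda^n+\sum_{j\geq1}f_j\lambda_{k_j}^n=0$ for all $n\geq0$, with $e_\lambda=q(\lambda)\phi(u_\lambda)$, $f_j=c_jq(\lambda_{k_j})\phi(v_{k_j})$ and $\sum_j|f_j|<\infty$. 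Since $q$ has finitely many roots, pick $j_0$ with $q(\lambda_{k_{j_0}})\neq0$; multiplying the identity by $\overline{\lambda_{k_{j_0}}}^{\,n}$ and taking Cesàro means in $n$, every term disappears except $f_{j_0}$ (the $\lambda$'s being distinct and unimodular), so $f_{j_0}=0$, whence $\phi(v_{k_{j_0}})=0$. As $\phi$ was arbitrary, $v_{k_{j_0}}=0$, a contradiction. Hence $\tilde w\in S\cap V$, and $S$ is dense.

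The main obstacle is precisely this last verification — guaranteeing that the recurrent vector we construct is trapped in \emph{no} finite-dimensional $\mathfrak{L}(\nu)$, not merely the obvious ones. The mechanism that makes it work is to force $\tilde w$ to carry infinitely many distinct unimodular eigen-components (no single polynomial in $T$ can annihilate them all) together with the fact that an absolutely convergent combination of eigenvectors for distinct unimodular eigenvalues cannot vanish unless it is trivial, which is the Cesàro-averaging step; the structural claim of the first paragraph is what lets us reduce the general situation to this eigenvector picture.
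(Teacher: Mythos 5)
Your proposal is correct, but it takes a genuinely different route from the paper's. The paper works with the orbit-span $E_x=\mathrm{span}\{T^nx:n\geq0\}$ and the subspace $A=\{x\in \mathrm{Rec}(T):\dim E_x<\infty\}$: when $A\neq X$ it notes that $\mathrm{Rec}(T)\setminus A$ is dense and that $E_x\subset\mathfrak{L}(\omega)$ already forces $\dim\mathfrak{L}(\omega)=\infty$; when $A=X$ it shows $T$ is invertible, inductively produces $x_1,x_2,\dots$ whose spans $E_{x_i}$ are in topological direct sum (via complemented $T$-invariant subspaces), and perturbs $x$ to $z=x+\sum_i x_i$ so that any $\mathfrak{L}(\omega)$ containing $z$ contains $\bigoplus_i E_{x_i}$. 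You instead classify every finite-dimensional $\mathfrak{L}(\omega)$ outright as a finite direct sum of full unimodular eigenspaces, obtaining $N\subseteq W$, split on whether $\overline{W}=X$, dispose of the case $\overline{W}\neq X$ by a Baire argument, and in the remaining case exclude $\tilde w$ from every finite-dimensional $\mathfrak{L}(\nu)$ by applying the minimal polynomial of $T|_{\mathfrak{L}(\nu)}$ and Ces\`aro-averaging unimodular characters. Your Ces\`aro (Wiener-type uniqueness) step replaces the paper's reliance on continuous projections onto the summands $E_{x_i}$ — arguably the most delicate point of the paper's induction — and your structural lemma replaces the citation of the finite-dimensional structure theorems of Costakis--Manoussos--Parissis; note that the two dichotomies do not coincide ($W$ only collects eigenvalues with finite-dimensional eigenspaces, while $A$ collects all finite orbit-spans), but each covers all cases. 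Two details you should make explicit: over $\mathbb{R}$ the eigenspace picture must be replaced by rotation blocks, as you acknowledge; and in a Fr\'echet space ``absolutely convergent'' must mean $\sum_j p_k(c_jv_{k_j})<\infty$ for every defining seminorm $p_k$, since this is what simultaneously yields $T^{\omega_n}\tilde w\to\tilde w$ by domination and $\sum_j|\phi(c_jv_{k_j})|<\infty$ for every $\phi\in X^*$ (continuity of $\phi$ bounds it by finitely many seminorms, not by the metric); both are achievable by scaling the $c_j$, so neither is a genuine gap.
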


\begin{proof}  For  $x \in X$, set $E_{x} :=  \mathrm{span}(\{T^{n}x : n \geq 0\})$. Now assume that $x \in \mathrm{Rec}(T)$ and  $\mathrm{dim}(E_x) < \infty$. As $E_x$ is a $T$-invariant subspace and $\mathrm{dim}(E_{x}) < \infty$, it follows from the structure theorem for recurrent operators on finite dimensional spaces
(Theorems 4.1 and 4.2 in \cite{Cos}) that $T|_{E_{x}}$ is conjugated to a unitary matrix.
For $y \in \mathrm{Rec}(T)$ with  $\mathrm{dim}(E_y) < \infty$, it then follows that $\{T^{n}(x+y) : n \geq 0\})$ is a  bounded subset of $E_x + E_y$ and that there exists a strictly increasing sequence $(\omega_n)$ such that $T^{\omega_n}(x+y)$ converges. As $T$ is power-bounded on $E_{x}+E_{y}$, $T|_{E_{x}+E_{y}}$ is an invertible isometry by Theorems 4.1 and 4.2 in \cite{Cos}. Hence,
\[   \lim_{n \to \infty}  \|T^{\omega_n -\omega_{n-1}}(x+y)  - x+y \|_2 = \lim_{n \to \infty}  \|T^{\omega_n}(x+y) -  T^{\omega_{n-1}}(x+y)\|_2 =0.\]
As one may assume without loss of generality that $(\omega_n -\omega_{n-1})$ is strictly increasing,  $x+y \in \mathrm{Rec}(T)$.
This implies that $A := \{x \in \mathrm{Rec}(T): \mathrm{dim}(E_{x}) < \infty \}$ is a $T$-invariant subspace of $X$.

\medskip
\noindent\emph{Case (i).} So assume that $A \neq X$ and that $\hbox{Rec}(T) \setminus A$ is not dense in $X$. However, this would imply that $A$ contains a neighborhood of $0$, which is absurd. Hence, if $A \neq X$, then $\hbox{Rec}(T) \setminus A$ is dense. Finally, if $x \in \mathfrak{L}(\omega) \setminus A$ for some $\omega \in \mathfrak{C}$, then  $\mathrm{dim}(\mathfrak{L}(\omega)) = \infty$ as $E_{x} \subset \mathfrak{L}(\omega)$.

\noindent\emph{Case (ii).}
If $A = X$, then $\mathrm{dim}(E_{x}) < \infty$ for each $x \in X$ and $\hbox{Rec}(T) = X$.
As $\mathrm{Rec}(T) = X$, it follows that $T$ is injective. We now fix $p \in X$. Then $T|_{E_{p}} : E_{p}\rightarrow E_{p}$ is an injective endomorphism of a finite dimensional subspace and hence invertible. Therefore, there exists $q\in E_{p}$ with $T(q)=p$. Thus, $T$ is invertible by the open mapping theorem.

Now choose $x \in X$ and a closed subspace $M_x$ such that $X = E_{x} \oplus M_{x}$. Then, as $M_x \cap T^{-1}(E_x) = M_x \cap E_x = \{0\}$, it follows that $M_x$ is $T$-invariant and that $E_x \cap E_y   = \{0\}$ for all $y \in M_x$.
We now proceed by induction as follows. Assume that $x_0:=x$, $\epsilon > 0$ and that $x_1, \dots , x_n \in X$ are chosen so that $E_{x_i} \cap E_{x_j} = \{ 0\}$ for $i \neq j$ and that $d(0,x_i) < \epsilon 2^{-i}$ for $1\leq i,j\leq n$. Then, as above, there exists a closed and $T$-invariant subspace $M_{n} \subset X$ with  $X = E_{x_1} \oplus \cdots \oplus E_{x_n} \oplus M_{n}$. Hence it suffices to choose $x_{n+1} \in M_{n}$ with $d(0,x_{n+1}) < \textcolor{blue}{\epsilon}2^{-n-1}$ in order to proceed by induction.

By the bounds on $d(0,x_i)$, it follows that  $z:= x + \sum_{i=1}^\infty x_i \in X$ and $d(x,z)< \epsilon$. Moreover, by construction, $\bigoplus_i E_{x_i}\subset \mathfrak{L}(\omega)$ for any $\omega$ with $z\in \mathfrak{L}(\omega)$. As $\epsilon > 0$ can be chosen arbitrarily, the statement is proven.
\end{proof}

\section{Quasi-rigid operatores} \label{sec:quasi-rigid}

The question whether the hypercyclicity of an operator $T$ implies that $T \oplus T$ is hypercyclic (cf. \cite{Herrero}) was intensively discussed in the scientific community due to its relation to weak mixing and the hypercyclicity criterion. In fact, it turned out that the three properties are equivalent  (\cite{Bes-Peris--Hereditarily-Hypercyclic-Operators--JFA-1999}) and that there are hypercyclic operators which do not satisfy this property (\cite{rosa}). This brings forth a natural question in the context of recurrent operators.

\begin{question}[Question 9.6, \cite{Cos}]
Let $T: X \rightarrow X$ be a recurrent operator on a separable Banach space $X$. Is it true that $T \oplus T$ is recurrent on $X \oplus X$?
\end{question}

In the remaining part of this section, we discuss possible solutions to this question. Recently, a negative answer based on a construction of Augé in \cite{Auge}
was abtained by Grivaux, López-Martínez and Peris (cf. \cite[Theorem 3.2]{Sophie}). However, we want to point that we independently arrived at the same conclusion in  Theorem \ref{ans-neg} and Proposition \ref{T2} through similar but different arguments, based on the Mycielski Theorem and the class of operators introduced by Tapia in
 \cite{Tapia}.

\begin{definition}\label{def:quasi-rigid}
We refer to the recurrent operator $T$ as quasi-rigid if there exists a sequence $\theta \in \mathfrak{C}$ with $\overline{\mathfrak{L}(\theta)}= X$. Moreover, if ${\mathfrak{L}(\theta)}= X$, then we refer to  $T$ as a rigid operator.
\end{definition}

As a first remark with respect to this definition, note that Proposition \ref{esp} implies that for any recurrent operator $T$ and $\omega \in \mathfrak{C}(T)$, the restriction $T: \overline{\mathfrak{L}(w)} \to \overline{\mathfrak{L}(w)}$ is quasi-rigid. If, in addition, $T$ is power-bounded, then the restriction is rigid.
In order to have a precise criterion for quasi-rigidity at hand, we study finite cartesian products of $T$. For ease of notation, we write $(X^{m}, T_{m})$ for the action of $\bigoplus_{n=1}^m T$ on the $m$-fold product space of $X$.

\begin{theorem}\label{equiv}
Consider a recurrent operator $T: X \rightarrow X$ on a separable Fréchet space $X$. The following assertions are equivalent.
\begin{enumerate}
    \item \label{item:theo-equiv-1}  $T$ is a quasi-rigid operator.
    \item \label{item:theo-equiv-2} For each $m \in \mathbb{N}$, $T_m: X^{m} \rightarrow X^{m}$ is recurrent.
\end{enumerate}
\end{theorem}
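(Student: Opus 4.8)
The plan is to prove the two implications separately, with $(2)\Rightarrow(1)$ carrying all the weight. For $(1)\Rightarrow(2)$: if $\theta\in\mathfrak C$ satisfies $\overline{\mathfrak L_T(\theta)}=X$, then for each $m$ one has $\mathfrak L_T(\theta)\times\cdots\times\mathfrak L_T(\theta)\subseteq\mathfrak L_{T_m}(\theta)$, since $T_m^{\theta_n}(x_1,\dots,x_m)=(T^{\theta_n}x_1,\dots,T^{\theta_n}x_m)\to(x_1,\dots,x_m)$ whenever each $x_i\in\mathfrak L_T(\theta)$. A finite product of dense sets is dense, and every point of $\mathfrak L_{T_m}(\theta)$ is recurrent for $T_m$; hence $\mathrm{Rec}(T_m)$ is dense in $X^m$, i.e.\ $T_m$ is recurrent.

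For $(2)\Rightarrow(1)$, I would fix a translation-invariant complete metric $d$ inducing the topology of $X$; then $d_m((x_i),(y_i)):=\max_{i\le m}d(x_i,y_i)$ is such a metric on $X^m$, and the countable product $X^{\mathbb N}$ is again a (separable) Fréchet space, hence a Baire space. The first observation is that $\mathrm{Rec}(T_m)$ is a dense $G_\delta$ subset of $X^m$: density is exactly the recurrence of $T_m$, and writing $\mathrm{Rec}(T_m)=\bigcap_{j\ge1}\bigcap_{N\ge1}\bigcup_{n>N}\{\mathbf x\in X^m:d_m(T_m^n\mathbf x,\mathbf x)<1/j\}$ exhibits it as a $G_\delta$ by continuity of each $T_m^n$. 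Consequently, if $\pi_m\colon X^{\mathbb N}\to X^m$ is the projection onto the first $m$ coordinates (a continuous open surjection), then each $\pi_m^{-1}(\mathrm{Rec}(T_m))$ is a dense $G_\delta$ in $X^{\mathbb N}$. Finally, fixing a countable dense set $\{d_j:j\in\mathbb N\}\subseteq X$ (here separability of $X$ enters), the sets $W_{j,l}:=\{(x_i)_i\in X^{\mathbb N}:d(x_k,d_j)<1/l\text{ for some }k\}$ are open and dense in $X^{\mathbb N}$, since any nonempty basic open subset of $X^{\mathbb N}$ constrains only finitely many coordinates. Intersecting these countably many dense $G_\delta$ sets and invoking the Baire property of $X^{\mathbb N}$, I obtain a point $(x_i)_{i\in\mathbb N}$ lying in all of them; then $\{x_i:i\in\mathbb N\}$ is dense in $X$ and $(x_1,\dots,x_m)\in\mathrm{Rec}(T_m)$ for every $m$.

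It then remains to build a single sequence $\theta$ witnessing the recurrence of every $x_k$ at once. Set $\theta_0:=0$ and, inductively, having chosen $\theta_{n-1}$, use that $(x_1,\dots,x_n)\in\mathrm{Rec}(T_n)$ — so that $\{t\in\mathbb N:\max_{k\le n}d(T^tx_k,x_k)<1/n\}$ is infinite — to pick $\theta_n>\theta_{n-1}$ in it. Then $\theta=(\theta_n)$ is strictly increasing and, for each fixed $k$, $d(T^{\theta_n}x_k,x_k)<1/n$ for all $n\ge k$, so $x_k\in\mathfrak L_T(\theta)$. Hence $\mathfrak L_T(\theta)\supseteq\{x_i:i\in\mathbb N\}$ is dense in $X$; in particular (assuming $X\ne\{0\}$, the contrary case being vacuous) $\theta\in\mathfrak C$ and $\overline{\mathfrak L_T(\theta)}=X$, i.e.\ $T$ is quasi-rigid.

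The one genuinely delicate point is the selection of $(x_i)$: one needs a single element of $X^{\mathbb N}$ that is recurrent for $T_m$ under every finite coordinate projection \emph{and} has a dense set of coordinates, and it is the Baire-category argument through the auxiliary dense open sets $W_{j,l}$ that supplies it. The finite-product hypothesis is used precisely to run the diagonal step, where returning close to $(x_1,\dots,x_n)$ simultaneously in all $n$ coordinates is what keeps every previously handled vector converging along the common sequence $\theta$.
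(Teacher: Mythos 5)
Your proof is correct, and while the easy direction $(1)\Rightarrow(2)$ coincides with the paper's, your argument for $(2)\Rightarrow(1)$ takes a genuinely different route. The paper invokes Mycielski's theorem: since each $\mathrm{Rec}(T_m)$ is residual in $X^m$, there is a Mycielski set $\mathcal K\subset X$ with $\mathcal K^m\subset\mathrm{Rec}(T_m)$ for all $m$, from which a countable dense subset $\{y_j\}$ is extracted and the sequence $\theta$ is built by exactly the same diagonal selection you use (their $H_k=\bigcap_{j\le k}N(y_j,B(y_j,1/k))$ plays the role of your set of times $t$ with $\max_{k\le n}d(T^tx_k,x_k)<1/n$). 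You replace Mycielski by a direct Baire-category argument in the countable product $X^{\mathbb N}$: the sets $\pi_m^{-1}(\mathrm{Rec}(T_m))$ are dense $G_\delta$'s because $\mathrm{Rec}(T_m)$ is a dense $G_\delta$ and $\pi_m$ is a continuous open surjection, and your auxiliary open dense sets $W_{j,l}$ force the coordinates of the generic point to be dense in $X$. This is more elementary and self-contained (only the Baire category theorem is needed, and you never have to worry about the pairwise-distinctness hypothesis in Mycielski's theorem), at the cost of producing only a single countable dense family of simultaneously recurrent vectors rather than the much richer Mycielski set — which is all this theorem requires, though the Cantor-set structure could matter for other applications. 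Both proofs then conclude identically, and your attention to the degenerate case $X=\{0\}$ and to why $\theta\in\mathfrak C$ is a point the paper glosses over.
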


We now recall the statement of Mycielski's Theorem. A set  $\mathcal{K}$ is referred to as a  \emph{Mycielski set} if the intersection of $\mathcal{K}$ and any nonempty open set $U$ contains a Cantor set.

\begin{theorem}[Mycielski Theorem, Corollary 1.1 in \cite{Tan}]
Suppose that $X$ is a separable complete metric space without isolated points, and that for every $n\in \mathbb{N}$, the set $\mathcal{R}_{n}$ is residual in the product space $X^{n}$. Then
there is a Mycielski set $\mathcal{K}$ in $X$ such that
\begin{eqnarray*}
(x_{1},x_{2}, \cdots, x_{n})\in \mathcal{R}_{n}
\end{eqnarray*}
for each $n\in \mathbb{N}$ and any pairwise different $n$ points $x_{1}, x_{2}, \ldots, x_{n}$ in $\mathcal{K}$.
\end{theorem}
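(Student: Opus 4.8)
The plan is to prove this by the classical Cantor-scheme (fusion) construction, coupled with a bookkeeping diagonalization and the fact that in a Baire space ``residual'' upgrades to ``contains a dense $G_\delta$''. First I would put the hypothesis in an open-dense form: since $X$ is separable and complete, each finite power $X^n$ is a Baire space, so residuality of $\mathcal{R}_n$ gives open dense sets $G_{n,m}\subseteq X^n$ (decreasing in $m$, say) with $\bigcap_m G_{n,m}\subseteq \mathcal{R}_n$. I would also fix a countable base $\{W_j\}_{j}$ of nonempty open subsets of $X$; the target is to produce, inside each $W_j$, a Cantor set $C_j$, and then set $\mathcal{K}:=\bigcup_j C_j$. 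Because any nonempty open $U$ contains some basic $W_j$ and hence contains $C_j$, such a $\mathcal{K}$ automatically meets the Mycielski condition as soon as every $C_j$ is a genuine Cantor set.

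Next I would build the Cantor sets simultaneously as images of Cantor schemes. For each $j$ I construct nonempty open sets $U^j_s$ indexed by finite binary words $s\in 2^{<\omega}$, with $U^j_{\emptyset}\subseteq W_j$ and the usual scheme conditions $\overline{U^j_{s0}}\cup\overline{U^j_{s1}}\subseteq U^j_s$, $\overline{U^j_{s0}}\cap\overline{U^j_{s1}}=\emptyset$, and $\mathrm{diam}(U^j_s)\to 0$ as $|s|\to\infty$. Since $X$ has no isolated points, every nonempty open set is infinite and each splitting step is feasible; completeness then makes each branch $\alpha\in 2^\omega$ determine a single point $x^j_\alpha\in\bigcap_k\overline{U^j_{\alpha|k}}$, and $C_j:=\{x^j_\alpha:\alpha\in 2^\omega\}$ is a Cantor set. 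The crucial structural fact here is that distinct points of $\mathcal{K}$ are, from some finite level on, carried by boxes of vanishing and hence eventually disjoint diameter.

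The crux is to interleave a bookkeeping scheme that forces the independence condition. I would enumerate all \emph{requirements}, each consisting of an integer $n$, an index $m$, and an ordered $n$-tuple of pairwise distinct nodes $(j_1,s_1),\dots,(j_n,s_n)$ sitting at a common level. When handling such a requirement I shrink the relevant open sets so that the product of the closures of the corresponding boxes lies inside $G_{n,m}$: by density there is a point of $G_{n,m}$ in the current product box, and by openness a product neighbourhood of it fits inside $G_{n,m}$, whose factors provide the shrunken sets. Since there are only countably many requirements, each one is processed at some finite stage.

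The hard part will be verifying that this interleaving is consistent, i.e. that meeting a new requirement never destroys the scheme conditions nor an already-satisfied requirement. The observation that makes the fusion go through is \emph{monotonicity}: once $\prod_i\overline{V_i}\subseteq G_{n,m}$ holds it persists after replacing any $V_i$ by a smaller open set, because $G_{n,m}$ is fixed; and the scheme conditions (nesting, disjoint siblings, shrinking diameters) are likewise stable under passing to smaller sets. Hence requirements may be processed one at a time by repeated shrinking. Finally I would check the conclusion: given pairwise distinct $x_1,\dots,x_n\in\mathcal{K}$, say $x_i=x^{j_i}_{\alpha_i}$, they are eventually carried by pairwise disjoint boxes, and for each $m$ the requirement attached to those box-addresses and index $m$ is eventually processed, forcing $(x_1,\dots,x_n)\in G_{n,m}$; therefore $(x_1,\dots,x_n)\in\bigcap_m G_{n,m}\subseteq\mathcal{R}_n$, as required.
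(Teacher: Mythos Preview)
The paper does not supply a proof of this statement: it is quoted verbatim as Corollary~1.1 of \cite{Tan} and invoked only as a black box in the proof of Theorem~\ref{equiv}. There is therefore nothing in the paper to compare your argument against.

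Your outline is the standard Cantor-scheme (fusion) proof of Mycielski's theorem and is correct in substance: pass from residual to a countable intersection of open dense sets via Baire, plant a Cantor scheme inside each member of a countable base, and interleave a bookkeeping enumeration of all requirements $(n,m,\text{tuple of distinct node-addresses})$, shrinking the relevant boxes so that the corresponding product of closures sits inside $G_{n,m}$. The monotonicity remark you isolate is exactly what makes the successive shrinkings compatible. One bookkeeping point worth making explicit is the handling of \emph{countably many} schemes at once: to keep each stage finitary you should introduce the $j$-th scheme only at some finite stage (e.g.\ stage $j$), so that at any given moment only finitely many boxes are active and the enumeration of requirements can be arranged so that each one is processed after all of its node-addresses have already been created. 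With that routine adjustment your argument goes through.
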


\begin{proof}[Proof of Theorem \ref{equiv}]
If the subspace $\mathfrak{L}(\theta)$ is dense in $X$ for some sequence $\theta \in \mathfrak{C}$, then $\mathfrak{L}(\theta)^m$ is dense in $X^m$. As $\mathfrak{L}(\theta)^m \subset \hbox{Rec}(T_m)$ for each positive integer $m$, we obtain that
 \ref{item:theo-equiv-1} implies  \ref{item:theo-equiv-2}.

Conversely, the recurrence of $T_m$ implies that $\hbox{Rec}(T_m)$ is a residual subset of $X^m$ for every $m \in \mathbb{N}$. Thus, by the Mycielski Theorem, there exists a dense set $\mathcal{K} \subset X$ with $\mathcal{K}^{m} \subset \hbox{Rec}(T_m)$ for all $m \in \mathbb{N}$. Through the separability of $X$, one obtains a countable dense set $\{y_{j}\}_{j\in\mathbb{N}} \subset \mathcal{K}$. For $k \in \mathbb{N}$,  set $H_{k}:=\bigcap_{j=1}^{k}N(y_{j},B(y_{j},1/k))$, with $N(a, A):=\{n\in \mathbb{N}: T^{n}(a)\in A\}$. As each of the $H_k$ is unbounded, there exists a strictly increasing sequence of positive integers {$\theta:=(\theta_{k})_{k}$} with {$\theta_{k}\in H_{k}$}. Hence $\mathfrak{L}(\theta)$ contains the sequence $\{y_{j}\}_{j\in \mathbb{N}}$. This implies that $\mathfrak{L}(\theta)$ is dense in $X$.
\end{proof}

\begin{remark} Our proof of Theorem \ref{equiv} is based on Mycielski's Theorem which gives rise to a  shorter proof of the result. It is worth noting that the  Mycielski Theorem also finds application within the domain of nonlinear dynamics, as illustrated in  \cite{Cai, Garcia, TanF}.
However, we want to stress here that Grivaux et al. (cf.  \cite[Theorem 2.5]{Sophie}) recently established Theorem \ref{equiv} in the context of Polish spaces through a  completely different proof, and that we only became aware of their result after already having proved Theorem \ref{equiv}.
\end{remark}

\subsection{Augé-Tapia operators} \label{subsec:tapia-operators}

By the Banach-Steinhaus Theorem for Banach spaces, the set of points with unbounded orbit is either empty or dense. Moreover, for finite-dimensional vector spaces, the stronger property holds that $A_{T}=\{x\in \mathbb{K}^{m}:\lim_{n\rightarrow \infty}\Vert{T^{n}x\Vert}=\infty\}$ is either empty or dense. Motivated by this fact, Prăjitură conjectured that this property might hold for general Banach spaces  (see \cite{Praji}). However, the conjecture was answered to the negative by Hájek and Smith in \cite{Hajek} in the same year. Thereafter, the techniques were extended and improved through works of Augé (\cite{Auge}) and Tapia (\cite{Tapia}) on wild dynamics.
\begin{definition}[\cite{Auge}]
    Let $T$ be a linear bounded operator on a Banach space $X$. We say that $T$ is a wild operator if $A_{T}$ and $\hbox{Rec}(T)$ have nonempty interior and form a partition of $X$, where the set $A_{T}$ is given by
    \begin{eqnarray*}
       A_{T}=\{x\in X:\lim_{n\rightarrow \infty}\Vert{T^{n}x\Vert}=\infty\}.
    \end{eqnarray*}
\end{definition}

The first examples of wild operators were obtained by Augé in \cite{Auge}, who showed that each infinite dimensional separable Banach space admits a wild operator through an explicit contruction based on the following object.
\begin{definition}[\cite{Auge}]
We say that a subset $F$ of $X$ is asymptotically separated if there exists a sequence $(g_{n})\subset X^{*}$ such that
$\liminf \vert{g_{n}(x)\vert}=0$ for all $x\in F$, and
$\lim \vert{g_{n}(x)\vert}=+\infty$ for all $x\notin F$.
\end{definition}

Note that, if $F$ is an asymptotically separated set, then it is automatically a $G_\delta$-set.
Moreover, as shown by Augé (\cite[Th. 1.1]{Auge}) and Tapia (\cite[Th. 3.1]{Tapia}), each asymptotically separated set gives rise to a bounded operator with the following property.
\begin{theorem}[Augé, Tapia]\label{Tapia-general}
 Let $X$ be a separable infinite dimensional complex Banach space, $V$ be a complemented, infinite codimensional subspace of $X$ and $\mathbb{P}$ a bounded projection onto $V$. Then there exists for any asymptotically separated set $F\subset V$ and a  bounded operator $T$ such that
$    A_{T}=\mathbb{P}^{-1}(V\setminus F)$ and  $\mathrm{Rec}(T)=\mathbb{P}^{-1}(F)$.
\end{theorem}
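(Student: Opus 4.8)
The plan is to produce $T$ by an explicit construction in the spirit of Augé and Tapia. Since $\mathbb{P}$ is a bounded projection, write $X=V\oplus W$ with $W:=\ker\mathbb{P}$, which is an infinite-dimensional separable Banach space, and fix a sequence of functionals $(g_n)\subset V^{*}$ witnessing the asymptotic separation of $F$, so that $\liminf_n|g_n(v)|=0$ for $v\in F$ and $\lim_n|g_n(v)|=\infty$ for $v\in V\setminus F$. One then designs $T$ on $X$ so that, roughly: the $V$-component is (essentially) frozen, so the interesting dynamics takes place inside the affine fibres $v+W$; and on $W$ there is a ``weighted shift plus rigid rotation'' mechanism that, as the orbit evolves, transports the initial $W$-data into ever higher, negligibly weighted coordinates while at the $n$-th step injecting a term proportional to $g_n(\mathbb{P}x)$ into a fresh coordinate. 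Iterating, the asymptotics of $\|T^{n}x\|$ are then governed by the scalars $g_n(\mathbb{P}x)$ together with the available return times of the rigid part.

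With such a $T$ in hand, two verifications remain. First, for $v\in V\setminus F$ one uses $|g_n(v)|\to\infty$: the coordinate into which $g_n(v)$ has just been injected has norm bounded below by a quantity tending to infinity, so $\|T^{n}x\|\to\infty$ and $x\in A_T$; for this one needs the shift-plus-rotation block to be power bounded and the memory of the $W$-part to have been pushed into high, small-weight coordinates, so that the blow-up term is not cancelled. Second, for $v\in F$ one selects, along the return times of the rigid block, a subsequence $(n_k)$ with $|g_{n_k}(v)|\to0$ and argues that by time $n_k$ all earlier injected contributions $g_j(v)$, $j<n_k$, have been transported into coordinates of negligible weight while the fresh contribution is $\approx g_{n_k}(v)\to0$, so that $T^{n_k}x\to x$ and $x\in\mathrm{Rec}(T)$. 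Since $\mathbb{P}^{-1}(F)$ and $\mathbb{P}^{-1}(V\setminus F)$ partition $X$, the two inclusions $\mathrm{Rec}(T)\supseteq\mathbb{P}^{-1}(F)$ and $A_T\supseteq\mathbb{P}^{-1}(V\setminus F)$ force the stated equalities.

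The hard part is the joint calibration of the parameters of the construction -- the shift weights, the widely separated ``sampling'' coordinates, the rotation angles producing the return times, and the coupling coefficients attached to the $g_n$ -- so that simultaneously $T$ is bounded on $X$, orbits over $V\setminus F$ escape to infinity, and orbits over $F$ return. These requirements are in genuine tension: making the coupling $v\mapsto(\text{injected terms})$ extend to a bounded operator forces the coupling coefficients to be strongly damped relative to $\|g_n\|$, which works against detecting the blow-up of $|g_n(v)|$ when $v\notin F$. The resolution is to have each $g_n$ act on the orbit only transiently -- exactly at time $n$ -- rather than through a fixed bounded perturbation, and to space the sampling coordinates so far apart along the shift that the transported mass of the early contributions decays geometrically; since $F$ is merely a $G_\delta$ and not a subspace, the return times must moreover be extracted individually for each $v\in F$ from the indices where $|g_n(v)|$ is small. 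Carrying this bookkeeping out is precisely the content of \cite[Th.~1.1]{Auge} and \cite[Th.~3.1]{Tapia}.
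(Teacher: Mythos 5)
You are being asked about a statement that the paper itself does not prove: Theorem \ref{Tapia-general} is quoted from \cite[Th.\ 1.1]{Auge} and \cite[Th.\ 3.1]{Tapia}, and your proposal, which ends by deferring ``the bookkeeping'' to exactly those references, is in that sense no less complete than the paper's treatment. The paper does, however, recall the construction explicitly right after the theorem (display \eqref{eq:auge-tapia-operator}), and your sketch of the mechanism does not match it. The actual operator is $T=S+B$, where $S$ is \emph{diagonal} with unimodular eigenvalues $\lambda_k=\exp(i\pi/m_k)$ (and the identity on $V$) and $B$ is a \emph{fixed} bounded coupling $Bx=\sum_{k>d} m_{k-1}^{-1}g_k(\mathbb{P}x)e_k$, bounded because $\sum_k \|g_k\|/m_{k-1}<\infty$. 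There is no weighted shift, nothing is ``transported into ever higher coordinates,'' and each $g_k$ acts at \emph{every} step rather than transiently at step $k$: its contributions accumulate coherently into the single coordinate $e_k$ as $\frac{\lambda_k^{n}-1}{(\lambda_k-1)m_{k-1}}g_k(\mathbb{P}x)$. The tension you correctly identify (damping the coupling enough for boundedness versus still detecting the blow-up of $|g_n(v)|$) is resolved not by transience but by \emph{resonance}: although the one-step coefficient $m_{k-1}^{-1}$ is tiny, the geometric sum $\sum_{j<n}\lambda_k^{j}$ reaches magnitude of order $m_k\gg m_{k-1}$, so the accumulated factor amplifies $|g_k(\mathbb{P}x)|\to\infty$; conversely, at the return times $n=2m_\ell$ all terms with $k\leq\ell$ vanish exactly (by the divisibility $m_k\mid m_\ell$), the tail is $O\bigl(\sum_{k>\ell+1}m_{k-2}\|g_k\|/m_{k-1}\bigr)\to 0$, and what survives is essentially $2g_{\ell+1}(\mathbb{P}x)$, which yields \eqref{eq:convergence-to-0-vs-recurrence} and hence recurrence precisely on $\mathbb{P}^{-1}(F)$.

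As a standalone argument your text therefore has a genuine gap: the operator is never written down, and the mechanism you describe is not realizable as stated. A fixed (autonomous) linear operator cannot inject $g_n(\mathbb{P}x)$ ``only at time $n$''; the time-$n$ orbit of $T=S+B$ necessarily sees the Birkhoff sum $\sum_{j<n}S^{j}Bx$, and arranging for that sum to single out $g_{\ell+1}$ at time $2m_\ell$ is exactly what the diagonal rotation and divisibility structure (not a shift) accomplish. Two points you do get right and which are indeed how the statement is closed: the $V$-component is frozen ($\mathbb{P}T=\mathbb{P}$), and since $A_T\cap\mathrm{Rec}(T)=\emptyset$ while $\mathbb{P}^{-1}(F)$ and $\mathbb{P}^{-1}(V\setminus F)$ partition $X$, the two inclusions force the two equalities. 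Note finally that the construction recalled in the paper takes $V$ finite-dimensional, whereas the theorem allows an arbitrary complemented infinite-codimensional $V$; for that generality one genuinely needs \cite{Auge} and \cite{Tapia}.
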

However, the  operators constructed by Augé are not recurrent as he put his focus on non-empty interiors of $A_{T}$ and $\mathrm{Rec}(T)$. This is where Tapia's work stands out by showing that any real or complex Banach space of dimension greater or equal to 2 admits an asymptotically separated set $U$ such that $\mathcal{U}$ and $\mathcal{U}^{c}$ are dense for the norm topology (Corollary 2.12 in \cite{Tapia}).
As an immediate consequence, one then obtains that every infinite dimensional separable complex space Banach space admits an operator $T$ such that $A_{T}$ and $\hbox{Rec}(T)$ form a partition of $X$ and both are dense (Cororally 3.3 in \cite{Tapia}).

\begin{definition}\label{Tapia-operator}
We refer to a bounded linear operator defined on a separable infinite-dimensional Banach space $T$ as
an \textcolor{black}{Augé-Tapia} operator of finite type $n$ if $T$ is as in Theorem \ref{Tapia-general}, where the space $V$ has dimension $n$ and $F$ is an asymptotically separated set  such that both $F$ and $F^c$ are dense in $V$.
\end{definition}
Furthermore, it almost immediately turns out that these operators form a class of operators that are recurrent but not quasi-rigid.

\begin{theorem}\label{ans-neg} A \textcolor{black}{Augé-Tapia} operator of finite type is recurrent but not quasi-rigid.
\end{theorem}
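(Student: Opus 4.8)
The statement follows almost immediately from the description of $\mathrm{Rec}(T)$ and $A_T$ in Theorem \ref{Tapia-general} together with Proposition \ref{esp}; the plan is simply to check the two halves. For recurrence, let $\mathbb{P}$ be the bounded projection onto the finite-dimensional subspace $V$, so that $\mathrm{Rec}(T) = \mathbb{P}^{-1}(F)$. Writing $X = V \oplus \ker\mathbb{P}$ we have $\mathbb{P}^{-1}(F) = F + \ker\mathbb{P}$, which is dense in $V + \ker\mathbb{P} = X$ because, by the definition of an Aug\'e--Tapia operator, $F$ is dense in $V$. Hence $\overline{\mathrm{Rec}(T)} = X$, i.e. $T$ is recurrent, and in particular Proposition \ref{esp} applies to $T$.

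For the failure of quasi-rigidity, suppose toward a contradiction that $T$ is quasi-rigid; then there is $\theta \in \mathfrak{C}$ with $\overline{\mathfrak{L}(\theta)} = X$. By Proposition \ref{esp}, $\mathfrak{L}(\theta)$ is a linear subspace of $X$, and since $\mathfrak{L}(\theta) \subseteq \mathrm{Rec}(T) = \mathbb{P}^{-1}(F)$, applying the continuous linear map $\mathbb{P}$ shows that $\mathbb{P}(\mathfrak{L}(\theta))$ is a linear subspace of the finite-dimensional space $V$ contained in $F$. As $\mathfrak{L}(\theta)$ is dense in $X$ and $\mathbb{P}$ maps $X$ continuously onto $V$, the subspace $\mathbb{P}(\mathfrak{L}(\theta))$ is dense in $V$; being a subspace of a finite-dimensional normed space it is also closed, so $\mathbb{P}(\mathfrak{L}(\theta)) = V$. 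Therefore $V \subseteq F$, contradicting the requirement that $F^c$ be dense --- in particular nonempty --- in $V$. Hence $T$ is not quasi-rigid.

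The only point requiring care --- rather than a genuine obstacle --- is that $F$ is merely a dense $G_\delta$ subset of $V$ with empty interior, so one cannot argue directly with the non-closed set $\mathbb{P}^{-1}(F)$ inside $X$. The resolution is to push $\mathfrak{L}(\theta)$ forward by $\mathbb{P}$ and exploit that $V$ is finite-dimensional, which forces the dense subspace $\mathbb{P}(\mathfrak{L}(\theta))$ to be all of $V$; equivalently, one may observe that $\mathbb{P}^{-1}(\mathbb{P}(\mathfrak{L}(\theta)))$ is closed and contains the dense set $\mathfrak{L}(\theta)$, hence equals $X$, again giving $\mathbb{P}(\mathfrak{L}(\theta)) = V$ and the same contradiction.
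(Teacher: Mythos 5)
Your proposal is correct and follows essentially the same route as the paper: both arguments push the dense subspace $\mathfrak{L}(\theta)$ forward by $\mathbb{P}$, use finite dimensionality of $V$ to conclude $\mathbb{P}(\mathfrak{L}(\theta)) = V$, and derive the contradiction $V \subseteq F \subsetneq V$. The only difference is that you spell out the (easy) recurrence half and the density-of-the-image step, which the paper leaves implicit.
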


\begin{proof} Assume that $V$, $F$ and $\mathbb{P}$ are as in Theorem \ref{Tapia-general}, and that $T$ is quasi-rigid. Then there exists $\omega \in \mathfrak{C}$ such that $\mathfrak{L}(\omega)$ is dense in $X$. Hence, $\mathbb{P}(\mathfrak{L}(\omega))$ is dense in $V$, which then implies that $V = \mathbb{P}(\mathfrak{L}(\omega))$ by finite dimensionality. Hence,
\begin{eqnarray*}
    V=\mathbb{P}(\mathfrak{L}(\omega))\subset \mathbb{P}(\mathrm{Rec}(T))=F\subsetneq V.
\end{eqnarray*}
This proves the theorem.
\end{proof}

We now recall the construction by Augé. As $X$ is a separable Banach space, there exist by Theorem 1 in \cite{Ovsepian} sequences $(e_{n}, e^{*}_{n})_{n\in \mathbb{N}}\subset X\times X^{*}$ such that
    \begin{enumerate}
        \item $\hbox{span}\{e_{n}:n\in \mathbb{N}\}$ is dense in $X$,
        \item $e^{*}_{n}(e_{m})=\delta_{n,m}$,
        \item for each $n\in \mathbb{N}$, $\Vert{e_{n}\Vert}=1$ and $M:=\sup_{n\in \mathbb{N}}\Vert{e^{*}_{n}\Vert}<\infty$.
    \end{enumerate}
It then follows from the third property that $\mathbb{P}(x):= \sum_{i=1}^d e^{*}_{i}(x) e_{i}$ defines an operator from $X$ to
$V:= \mathrm{span}(e_{1},e_{2},\cdots, e_{d})$ and that  $\Vert{\mathbb{P}\Vert}\leq dM$.
Choose an asymptotically separated set $F \subset V$ with associated sequence $(g_n)$ in $X^{*}$, as well as an increasing sequence $(m_k)$ with
$m_{k}\vert m_{k+1}$   for all $n\in \mathbb{N}$ and
\[ \sum_{k\geq d+1}\tfrac{m_{k-2}}{m_{k-1}}\Vert{g_{k}\Vert} < \infty.\]
We now define an
operator $S$ based on these sequences as follows. With $\lambda_{k} := 1$ for $1 \leq k \leq d$  and  $\lambda_{k}=\exp \frac{i\pi}{m_{k}}$ for $k > d$, define
\[  S: \textstyle\sum_{j=1}^{\ell}x_{k}e_{k} \mapsto  \textstyle\sum_{k=1}^{\ell}\lambda_{k}x_{k}e_{k},  \]
acting on $\mathrm{span}(e_{1},e_{2},\cdots, e_{\ell})$,  for  $\ell \in  \mathbb{N}$. Observe that $S$ extends to a bounded operator, $S:X \to X$. We are now in position to define the operator introduced by Augé in \cite{Auge}.
For $x\in X$, let
\begin{eqnarray} \label{eq:auge-tapia-operator}
    Tx=Sx + \sum_{k=d+1}^{\infty}\frac{1}{{m}_{k-1}} g_{k}(\mathbb{P}x)e_{k}.
\end{eqnarray}
As it easily can be verified by induction (cf. Lemma 3.5 in \cite{Auge}), we have for any $x\in X$ and $n \in \mathbb{N}$ that
\begin{eqnarray*}
  T^{n}x=S^{n}x+\sum_{k=d+1}^{\infty}\frac{ \sum_{j=0}^{n-1} \lambda_{k}^{j} }{m_{k-1}}  g_{k}(\mathbb{P}x)e_{k} =
  S^{n}x+\sum_{k=d+1}^{\infty}\frac{\lambda_{k}^{n} -1}{m_{k-1} (\lambda_{k} -1)}  g_{k}(\mathbb{P}x)e_{k}.
\end{eqnarray*}

As shown by Augé, this continuous linear operator $T$ satisfies (cf. \cite[p. 2108,2109]{Auge}) that
$A_{T}=\mathbb{P}^{-1}(V\setminus F)$ and $\mathrm{Rec}(T)=\mathbb{P}^{-1}(F)$. Furthermore, it follows from the underlying estimates, for any sequence $\theta_n \uparrow \infty$, that
\begin{equation} \label{eq:convergence-to-0-vs-recurrence}
 { \lim_{n \to \infty} \| T^{2m_{\theta_{n}-1}}(x) - x\| =0 \iff \lim _{n \to \infty} g_{\theta_n}(\mathbb{P}x) =0.}
\end{equation}
In particular, if $F$ and $F^c$ are dense in $V$, $T$ is an Augé-Tapia operator of finite type $d$.
As a first application, we show that the $d$-th product of the above operator no longer is recurrent.

\begin{proposition}\label{T2} Let $X$ be a separable infinite dimensional complex Banach space and $T$ be the {Augé-Tapia} operator of finite type $d$
defined through \eqref{eq:auge-tapia-operator} with respect to an asymptotically separated set $F \subset \mathbb{C}^d$ such that $F$ and $F^c$ are dense.
Then $T_{d}$ is not recurrent.
\end{proposition}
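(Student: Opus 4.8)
By Theorem~\ref{ans-neg} the operator $T$ is recurrent but not quasi-rigid, so Theorem~\ref{equiv} already tells us that $T_m$ fails to be recurrent for \emph{some} $m$; the content of the proposition is that one may take $m=d$. The plan is to argue by contradiction. Assuming $T_d$ is recurrent, I will show that
\[
\mathrm{Rec}(T_d)\ \subseteq\ \mathcal Z:=\{(x_1,\dots,x_d)\in X^d:\ \mathbb P x_1,\dots,\mathbb P x_d\ \text{are linearly dependent in}\ V\},
\]
which is impossible: $\mathcal Z$ is closed, being the zero set of the continuous map $(x_1,\dots,x_d)\mapsto\det(\mathbb P x_1,\dots,\mathbb P x_d)$ formed in a fixed basis of $V$, and it has empty interior, because given any $(x_1,\dots,x_d)$ and $u_1,\dots,u_d\in X$ with $\mathbb P u_i=e_i$ (so $\{\mathbb P u_i\}_{i=1}^d$ is a basis of $V$), the polynomial $t\mapsto\det(\mathbb P x_1+t\mathbb P u_1,\dots,\mathbb P x_d+t\mathbb P u_d)$ has leading coefficient $\det(\mathbb P u_1,\dots,\mathbb P u_d)\neq0$, hence is nonzero for arbitrarily small $t\neq0$, i.e.\ arbitrarily close tuples lie outside $\mathcal Z$. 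Since a recurrent operator satisfies $\overline{\mathrm{Rec}(T_d)}=X^d$, the inclusion above is the desired contradiction.

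The heart of the matter is the following claim: \emph{if $(x_1,\dots,x_d)\in\mathrm{Rec}(T_d)$, then there is a sequence $k_n\to\infty$ with $g_{k_n}(\mathbb P x_i)\to0$ for every $i=1,\dots,d$.} Granting this, suppose in addition that $\mathbb P x_1,\dots,\mathbb P x_d$ is a basis of $V$. Then every $w\in V$ is a fixed linear combination of the $\mathbb P x_i$, so $g_{k_n}(w)\to0$ for all $w\in V$. Picking $w\in V\setminus F$, which is non-empty because $F^c$ is dense in $V$, the defining property of an asymptotically separated set gives $\lim_k|g_k(w)|=+\infty$, hence $|g_{k_n}(w)|\to+\infty$ — a contradiction. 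Therefore no tuple in $\mathrm{Rec}(T_d)$ can have linearly independent $\mathbb P$-images, i.e.\ $\mathrm{Rec}(T_d)\subseteq\mathcal Z$, as needed.

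It remains to prove the claim. Fix $\omega_n\uparrow\infty$ with $T^{\omega_n}x_i\to x_i$ for all $i$, and read off from $T^{n}x=S^{n}x+\sum_{k>d}\frac{\lambda_{k}^{n}-1}{m_{k-1}(\lambda_{k}-1)}g_{k}(\mathbb P x)e_{k}$ that, for $k>d$,
\[
e^*_k\big(T^{\omega_n}x_i-x_i\big)=\big(\lambda_k^{\omega_n}-1\big)\Big(e^*_k(x_i)+\tfrac{g_k(\mathbb P x_i)}{m_{k-1}(\lambda_k-1)}\Big)\ \longrightarrow\ 0\qquad(n\to\infty).
\]
Recall also that $e^*_k(x)\to0$ as $k\to\infty$ for every $x\in X$, a standard consequence of $\sup_k\|e^*_k\|<\infty$ together with the density of $\operatorname{span}\{e_n\}$. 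Call $k>d$ \emph{stable} if $2m_k\mid\omega_n$ for all large $n$; since $m_j\mid m_k$ for $j\le k$, the stable indices form a down-set of $\{d+1,d+2,\dots\}$. If only finitely many are stable, then for every large $k$ the factor $\lambda_k^{\omega_n}-1$ stays bounded away from $0$ along a subsequence of $n$'s, which forces the bracket above to vanish; hence $g_k(\mathbb P x_i)=-m_{k-1}(\lambda_k-1)e^*_k(x_i)$, and since $m_{k-1}|\lambda_k-1|=2m_{k-1}\sin\frac{\pi}{2m_k}\le\pi$ we get $g_k(\mathbb P x_i)\to0$ along all $k$; take $k_n:=n$. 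Otherwise every $k>d$ is stable; set $\ell_n:=\max\{k\ge d+1:2m_k\mid\omega_n\}$ (finite, with $\ell_n\to\infty$) and $k_n:=\ell_n+1$. Writing $\omega_n=2m_{\ell_n}q_n$ and $s_n:=m_{\ell_n+1}/m_{\ell_n}\ge2$, maximality of $\ell_n$ forces $s_n\nmid q_n$, so $\lambda_{k_n}^{\omega_n}=e^{2\pi i q_n/s_n}$ is a nontrivial $s_n$-th root of unity; using $\sin t\ge\frac{2}{\pi}t$ on $[0,\frac{\pi}{2}]$ and $m_{k_n}=s_nm_{\ell_n}$ one obtains the uniform estimate
\[
\Big|\frac{\lambda_{k_n}^{\omega_n}-1}{m_{\ell_n}(\lambda_{k_n}-1)}\Big|\ \ge\ \frac{4}{\pi}.
\]
Since $|\lambda_{k_n}^{\omega_n}-1|\,|e^*_{k_n}(x_i)|\le2|e^*_{k_n}(x_i)|\to0$ and $e^*_{k_n}(T^{\omega_n}x_i-x_i)\to0$, the previous display yields
\[
|g_{k_n}(\mathbb P x_i)|\ \le\ \tfrac{\pi}{4}\Big(\big|e^*_{k_n}(T^{\omega_n}x_i-x_i)\big|+2\,|e^*_{k_n}(x_i)|\Big)\ \longrightarrow\ 0\qquad(n\to\infty),
\]
which proves the claim for each $i$ simultaneously along the same $(k_n)$.

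The step I expect to be the main obstacle is the last one: producing a single index $k_n$ that serves all $d$ coordinates at once and at which the scalar $\frac{\lambda_{k_n}^{\omega_n}-1}{m_{\ell_n}(\lambda_{k_n}-1)}$ is bounded below uniformly in $n$. This is precisely where one uses that the type is exactly $d$ — so that $\dim V=d$ and $d$ vectors in general position span $V$ — together with the divisibility structure of the sequence $(m_k)$; the remaining ingredients are soft.
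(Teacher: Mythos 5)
Your proof is correct, and while it shares its opening moves with the paper's argument, it closes the contradiction by a genuinely different mechanism. Both proofs assume $T_d$ recurrent, use a Baire-category observation to produce a recurrent $d$-tuple whose projections $\mathbb{P}x_1,\dots,\mathbb{P}x_d$ are linearly independent in $V$, and both rest on the coordinate identity $e_k^*(T^{\omega_n}x-x)=(\lambda_k^{\omega_n}-1)\bigl(e_k^*(x)+g_k(\mathbb{P}x)/(m_{k-1}(\lambda_k-1))\bigr)$. From there the routes diverge. The paper additionally forces the tuple into the residual set $X_0^d$ on which the bracket never vanishes; recurrence then yields $\lambda_k^{\theta_\ell}\to 1$ for every $k>d$, hence $\mathrm{span}(e_k:k>d)$ and $\mathrm{span}(\{y_i\})$ both lie in $\mathfrak{L}(\theta)$, so $\overline{\mathfrak{L}(\theta)}=X$, contradicting the failure of quasi-rigidity established in Theorem \ref{ans-neg}. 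You instead prove, for an \emph{arbitrary} recurrent $d$-tuple, that $g_{k_n}(\mathbb{P}x_i)\to 0$ along a common subsequence $k_n\to\infty$, handling vanishing and non-vanishing brackets at once via the stable/unstable dichotomy on divisibility of $\omega_n$ by $2m_k$ and the uniform lower bound $4/\pi$ on $\bigl|(\lambda_{k_n}^{\omega_n}-1)/(m_{\ell_n}(\lambda_{k_n}-1))\bigr|$; linear independence then transports this decay to every $w\in V$, contradicting $\lim_k|g_k(w)|=+\infty$ at a point $w\in V\setminus F$. What each approach buys: the paper's genericity trick ($X_0$ residual) lets it avoid any quantitative analysis of the roots of unity and reduces everything to the already-proved Theorem \ref{ans-neg}, which in turn cites Augé's identification $\mathrm{Rec}(T)=\mathbb{P}^{-1}(F)$ as a black box; your argument is self-contained, in effect re-deriving the multi-point version of the inclusion $\mathrm{Rec}(T)\subset\mathbb{P}^{-1}(F)$ directly from the definition of an asymptotically separated set, at the cost of the more delicate case analysis. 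I checked the quantitative steps (the down-set structure of the stable indices, $|\lambda_{k_n}^{\omega_n}-1|\geq 4/s_n$, $m_{\ell_n}|\lambda_{k_n}-1|\leq\pi/s_n$, and $e_k^*(x)\to 0$) and they are all sound.
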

\begin{proof}
Assume that $T_d$ is recurrent. Then the set  $\mathrm{Rec}(T_{d}) \subset   X^{d}$ is residual. Furthermore, as
\begin{eqnarray*}
X_0:=  X\setminus  \bigcup_{k\geq d+1}\left\{x\in X:e_{k}^{*}x=-\tfrac{g_{k}(\mathbb{P}x)}{(\lambda_{k}-1)m_{k-1}} \right\}  \subset  X
\end{eqnarray*}
also is residual, there exist $y_1, \ldots , y_d \in   X_{0}$ such that $(y_{1}, y_2, \ldots, y_d) \in \mathrm{Rec}(T_{d})$, and such that $\{\mathbb{P}(y_{i})\}_{i=1}^{d}$ is linearly independent in $\mathrm{span} (\{e_1,\ldots , e_d\})$. By recurrence, there is a sequence $\theta=(\theta_{\ell})_{\ell}\in \mathfrak{C}$ such that
$\lim_\ell  T^{\theta_{\ell}}(y_{i}) = y_i$
for each $i\in\{1,\ldots, n\}$.
By applying $e_{k}^{*}$ to the iterates of $T$, we obtain for $k\geq d+1$  that
\begin{eqnarray*}
    e^{*}_{k}(T^{\theta_\ell}(y_i) - y_i) & = & e^{*}_{k}    \left(S^{\theta_{\ell}}y_{i}+\sum_{k\geq d+1} \frac{\sum_{j=0}^{\theta_{\ell}-1}\lambda_{k}^{j} }{m_{k-1}}g_{k}(\mathbb{P}y_{i})e_{k}-y_{i}\right)\\
    & = & ({\lambda_{k}^{\theta_{\ell}}-1}) \left({e^{*}_{k}(y_{i})+ \frac{g_{k}(\mathbb{P}y_{i})}{(\lambda_{k}-1)m_{k-1}}}\right)
\end{eqnarray*}
Now observe that the second term in the above identity does not vanish as $y_i \in X_0$. Hence, by recurrence,  $\lambda_{k}^{\theta_{\ell}}$ has to converge  to $1$ as $\ell$ tends to infinity. \textcolor{black}{Hence,
$\mathrm{span}(e_{i}:i>d)\subset \mathfrak{L}(\theta)$ and $W_d \subset {\mathfrak{L}(\theta)}$, for $W_{d} := \mathrm{span} (\{y_i\})$. However, $\mathbb{P}(W_d) = \mathrm{span} (\{e_1,\ldots , e_d\})$, which implies that $\overline{\mathfrak{L}(\theta)}= X$. This contradicts Theorem \ref{ans-neg}.}
\end{proof}

\begin{remark} It is worth pointing out the differences to the results by Grivaux, López-Martínez and Peris in \cite{Sophie}. In there,
the authors gave an example of an operator $T$, also defined through \eqref{eq:auge-tapia-operator}, such that $T_{d-1}$ is recurrent and $T_{d}$ is not.
However, instead of using linear forms coming from an asymptotically separated set, they use a given sequence of linear forms $(\lambda_n g_n)$ such that $\{g_n\}$ is dense in the unit sphere and $\lambda_n$ growths in an appropriate way.
\end{remark}

\section{Hyper-recurrent operators} \label{sec:hyper}

In this section, we analyze a class of operators such that there exist a vector who, in rough terms, describes the complete recurrrent behaviour of $T$. Due to their similarity with hypercyclic operators, we will refer to them as \emph{hyper-recurrent} vectors (see Definition \ref{def:hyper-recurrent}).
Note that it follows immediately from the definitions that a hyper-recurrent operator is quasi-rigid. Furthermore, as shown below, quasi-rigid operators are factors of hyper-recurrent ones. As a first step, we introduce the concept of stationary sequences.

\subsection{Stationary sequences}

Recall that an operator on $X$ is quasi-rigid if $\overline{\mathfrak{L}(\omega)}=X$ for some $\omega \in \mathfrak{C}$. In particular, $\overline{\mathfrak{L}(\omega)}=\overline{\mathfrak{L}(\mu)}$ for each $\mu$ subsequence of $\omega$. However, in order to
study recurrent operators without this property, like, e.g. the Tapia operators (cf.  Theorem \ref{ans-neg}), we give the following definition.

\begin{definition}
The sequence $\omega\in\mathfrak{C}_{T}$ is stationary if
$\overline{\mathfrak{L}(\omega)}=\overline{\mathfrak{L}(\mu)}$
for each $\mu$ subsequence of $\omega$.
\end{definition}

We now show that stationary sequences always exist \textcolor{black}{on separable Fréchet spaces}.

\begin{theorem}\label{stati}
Let $T: X\rightarrow X$ be a recurrent operator in a separable Fréchet space. Then each sequence in $\mathfrak{C}$ admits stationary subsequence.
\end{theorem}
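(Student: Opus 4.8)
The plan is to use a diagonalization argument based on the fact that the closed subspaces $\overline{\mathfrak{L}(\mu)}$, as $\mu$ ranges over subsequences of a fixed $\omega \in \mathfrak{C}$, form a family that is partially ordered by reverse inclusion under the subsequence relation (by Proposition \ref{esp}, passing to a subsequence can only enlarge $\mathfrak{L}$, hence its closure). Fix $\omega = (\omega_n) \in \mathfrak{C}$. The goal is to extract a subsequence $\theta$ of $\omega$ that is "minimal" in the sense that no further subsequence strictly enlarges $\overline{\mathfrak{L}(\cdot)}$; such a $\theta$ is then automatically stationary, because any subsequence $\mu$ of $\theta$ satisfies $\overline{\mathfrak{L}(\theta)} \subset \overline{\mathfrak{L}(\mu)}$, and minimality forces equality.

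The main step is to produce this minimal subsequence. First I would observe that since $X$ is a separable Fréchet space, it is second countable; fix a countable base $\{U_j\}_{j \in \mathbb{N}}$ of open sets. For a subsequence $\mu$ of $\omega$, the closed subspace $\overline{\mathfrak{L}(\mu)}$ is determined by the set $J(\mu) := \{ j \in \mathbb{N} : U_j \cap \overline{\mathfrak{L}(\mu)} \neq \emptyset\}$ of indices of basic open sets it meets, and $\mu' $ a subsequence of $\mu$ gives $J(\mu) \subset J(\mu')$. Now I would build a decreasing chain of subsequences $\omega = \theta^{(0)} \supset \theta^{(1)} \supset \theta^{(2)} \supset \cdots$ together with an increasing chain of index sets: at stage $k$, if there is a subsequence $\nu$ of $\theta^{(k-1)}$ with $j_k \in J(\nu)$ for some new index $j_k \notin J(\theta^{(k-1)})$ — equivalently, if one can enlarge the closure by meeting the $k$-th basic open set not yet met, processing the $U_j$ in order — set $\theta^{(k)} := \nu$; otherwise set $\theta^{(k)} := \theta^{(k-1)}$. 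Finally take $\theta$ to be the diagonal sequence $\theta_n := \theta^{(n)}_n$ (after the usual adjustment to keep it strictly increasing and a genuine subsequence of $\omega$). By construction $\theta$ is eventually a subsequence of each $\theta^{(k)}$, so $\overline{\mathfrak{L}(\theta^{(k)})} \subset \overline{\mathfrak{L}(\theta)}$ for all $k$, and $J(\theta) \supset \bigcup_k J(\theta^{(k)})$; conversely the construction is designed so that no subsequence of $\theta$ can meet a basic open set outside $\bigcup_k J(\theta^{(k)})$, whence $\overline{\mathfrak{L}(\mu)} = \overline{\mathfrak{L}(\theta)}$ for every subsequence $\mu$ of $\theta$.

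The delicate point — and the one I expect to be the main obstacle — is making the last implication rigorous: one must argue that the "greedy" process of enlarging along basic open sets really exhausts all possible enlargements, i.e. that if some subsequence $\mu$ of $\theta$ had $U_j \cap \overline{\mathfrak{L}(\mu)} \neq \emptyset$ for an index $j$ processed at a stage where we declined to enlarge, this contradicts the stopping rule at that stage (here one uses that $\theta$ is, from that stage on, a subsequence of $\theta^{(k-1)}$, so $\mu$ is too, and the stopping rule said no subsequence of $\theta^{(k-1)}$ meets $U_j$). A clean way to organize this is to work not with the countable base directly but with the countable dense set $\{x_i\}$ furnished by separability and the countably many "rational" neighborhoods $B(x_i, 1/m)$; an index is "activated" once some subsequence forces the corresponding ball to meet $\mathfrak{L}$, and one checks activation is monotone and that the diagonal subsequence activates exactly the limit set of indices. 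Once the bookkeeping is set up correctly, monotonicity of $\mu \mapsto \overline{\mathfrak{L}(\mu)}$ from Proposition \ref{esp} does the rest, and $\theta \in \mathfrak{C}$ since $\theta$ is an infinite subsequence of $\omega \in \mathfrak{C}$ and $\mathfrak{L}(\omega) \subset \mathfrak{L}(\theta)$ already contains a nonzero vector.
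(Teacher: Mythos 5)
Your argument is correct, but it takes a genuinely different route from the paper's. The paper introduces a strict partial order on the set of subsequences of $\omega$ (roughly, $\mu\prec\nu$ when some shift of $\nu$ is a subsequence of $\mu$ and $\overline{\mathfrak{L}(\mu)}\subsetneq\overline{\mathfrak{L}(\nu)}$), shows that every chain has an upper bound --- first reducing to a countable chain via separability and then performing essentially the nested-subsequence diagonalization you describe --- and invokes Zorn's Lemma; the maximal element is the stationary subsequence. You instead run the diagonalization directly against a countable base: at stage $k$ either some subsequence of $\theta^{(k-1)}$ makes $\overline{\mathfrak{L}(\cdot)}$ meet $U_k$, in which case you pass to it and the index stays activated forever by monotonicity, or no subsequence of $\theta^{(k-1)}$ ever can, in which case no subsequence of the final diagonal can either, since every such subsequence is (up to a shift) a subsequence of $\theta^{(k-1)}$. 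Combined with the fact that two closed sets meeting exactly the same basic open sets coincide, and that $\overline{\mathfrak{L}(\theta)}\subset\overline{\mathfrak{L}(\mu)}$ is automatic, this gives stationarity. Both proofs rest on the same engine --- separability, monotonicity of $\mu\mapsto\mathfrak{L}(\mu)$ under passing to subsequences, and the diagonal trick for a nested countable family --- but your greedy exhaustion avoids Zorn's Lemma entirely and is more constructive, while the paper's version produces a maximal element of the order $\prec$, which is slightly stronger than stationarity but not exploited elsewhere. The one step you should write out in full is the shift bookkeeping you allude to: a subsequence $\mu$ of the diagonal $\theta$ is only \emph{eventually} a subsequence of $\theta^{(k-1)}$, so you need $\mathfrak{L}(\sigma^{M}(\mu))=\mathfrak{L}(\mu)$ from Proposition \ref{esp} before the stage-$k$ stopping rule applies to $\mu$; once that is in place the argument is complete, and $\theta\in\mathfrak{C}$ follows as you say from $\mathfrak{L}(\omega)\subset\mathfrak{L}(\theta)$.
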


\begin{proof} For a fixed a sequence $\omega \in \mathfrak{C}$, define $\mathfrak{A}(\omega)$ as the set of all subsequences of $\omega$. We introduce a strict partial order on $\mathfrak{A}(\omega)$ given by

\textcolor{black}{\begin{eqnarray*}
\mu\prec \nu \hbox{ if } \;\; \exists n\geq 0\; \text{such that}\; \sigma^{n}(\nu)\; \text{is a subsequence of}\; \mu\; \text{with}\:  \overline{\mathfrak{L}(\mu)}\subsetneq \overline{\mathfrak{L}(\nu)}.
\end{eqnarray*}}

Our aim is to show that every totally ordered subset of $\mathfrak{A}(\omega)$ has an upper bound, thereby invoking Zorn's Lemma to establish the existence of a maximal element, which will be a stationary subsequence of $\omega$.

Assume $\{\mu_{i}\}_{i\in I}$ is a totally ordered subset. If there exists an upper bound $\mu$ within $\{\mu_{i}\}_{i\in I}$, we are done. Otherwise, using the separability of $X$, we can construct a sequence $\{\nu_{n}\}_{n\in\mathbb{N}}\subset \{\mu_{i}\}_{i\in I}$ such that $\overline{ \bigcup_{i\in I} \mathfrak{L}(\mu_{i}) }=\overline{ \bigcup_{n\in\mathbb{N}}\mathfrak{L}(\nu_{n})}$. It's clear that $\{\nu_{n}\}_{n\in\mathbb{N}}\subset\mathfrak{A}(\omega)$ is totally ordered. We now show that for each $\alpha\in\{\mu_{i}\}_{i\in I}$ there exists $n\in\mathbb{N}$ such that $\alpha\prec \nu_{n}$.
Suppose the opposite, i.e. that there exists $\beta\in \{\mu_{i}\}_{i\in I}$ with $\nu_{n}\prec \beta$ for each $n\in \mathbb{N}$. This implies that  $\overline{ \bigcup_{n\in\mathbb{N}}\mathfrak{L}(\nu_{n})}\subset \overline{\mathfrak{L}(\beta)}$, which, due to $\beta$ not being an upper bound, leads to the strict inclusion $\overline{\mathfrak{L}(\beta)}\subsetneq\overline{ \bigcup_{i\in I} \mathfrak{L}(\mu_{i})}$, a contradiction.

Hence, it remains to establish an upper bound for $\{\nu_{n}\}_{n\in\mathbb{N}}$. To do this, we first may assume without loss of generality that $\nu_n \prec \nu_{n+1}$ for all $n  \in \mathbb{N}$. As $\{\nu_{n}\}$ is totally ordered, there exists an increasing sequence $(m_k)$ such that $\sigma^{m_{k}}(\nu_{k})$ is subsequence of $\sigma^{m_{k-1}}(\nu_{k-1})$ for each $k\geq 2$. Now choose a sequence $(a_{k})_k$ such that $a_{k}\in \sigma^{m_{k}}(\nu_{k})$ and $a_{k}\uparrow \infty$ for all $k \in \mathbb{N}$.

For  $\psi = (a_{k})$, it then follows by construction that $\sigma^k(\psi)$ is a subsequence of $\nu_{k}$ and by Proposition \ref{esp}, that $\mathfrak{L}(\psi) \supset \mathfrak{L}(\nu_k)$ for all $k \in \mathbb{N}$. This proves that  each totally ordered subset of $(\mathfrak{A}(\omega), \prec)$ has an upper bound. Applying Zorn's Lemma, a maximal element $\psi$ exists. If $\gamma$ is a subsequence of $\psi$, then by maximality, $\overline{\mathfrak{L}(\psi)}=\overline{\mathfrak{L}(\gamma)}$, indicating that $\psi$ is a stationary subsequence of $\omega$.
\end{proof}

We remark that the above proof in verbatim applies to also to recurrent maps on Polish spaces. We now proceed with a dynamical characterizations of stationary sequences.
In order to do so, for $\omega \in \mathfrak{C}$ and $A$ open, set $N_\omega(A):= \{ n \in \mathbb{N} : T^{-n}(A) \cap A \neq 0 \}$.

\begin{proposition}
Let $X$ be a Fréchet space, $T: X\rightarrow X$ be a recurrent operator and $\omega=(\omega_{n})_{n\in \mathbb{N}}$ a stationary sequence for $T$.
\begin{enumerate}
 \item  We have $\overline{\mathfrak{L}(\omega)}=X$ if and only if $|N_\omega(A)|=\infty$  for each non-empty open set $A$.
 \item  If $T$ is power bounded, then ${\mathfrak{L}(\omega)} = \{ x: |N_\omega(B(x,\epsilon))| = \infty \hbox{ for all } \epsilon> 0 \}$.
\end{enumerate}
\end{proposition}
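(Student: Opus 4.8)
I would prove both parts by separating, in each, an elementary inclusion valid for every $\omega\in\mathfrak C$ from a converse inclusion for which stationarity of $\omega$ is essential. The elementary part is a one-line observation: if $x\in\mathfrak L(\omega)$ then $T^{\omega_n}x\to x$, so for every neighbourhood $A$ of $x$ we have $x\in A\cap T^{-\omega_n}(A)$ for all large $n$, hence $N_\omega(A)$ is cofinite and in particular infinite. Applying this with $A$ an arbitrary non-empty open set (and using that $\overline{\mathfrak L(\omega)}=X$ forces $\mathfrak L(\omega)$ to be dense) yields the forward implication in (1), while applying it with $A=B(x,\epsilon)$ yields the inclusion $\subseteq$ in (2).

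For the converse in (1) I would fix a non-empty open set $V$ and aim to produce a point of $\overline{\mathfrak L(\omega)}$ inside $V$; since $\overline{\mathfrak L(\omega)}$ is closed, doing this for every $V$ forces $\overline{\mathfrak L(\omega)}=X$. Working with a complete translation-invariant metric $d$ on $X$, I would build inductively non-empty open sets $V_0=V\supseteq\overline{V_1}\supseteq V_1\supseteq\overline{V_2}\supseteq\cdots$ and indices $n_1<n_2<\cdots$: given $V_{k-1}$, the hypothesis $|N_\omega(V_{k-1})|=\infty$ supplies $n_k>n_{k-1}$ with $W_k:=V_{k-1}\cap T^{-\omega_{n_k}}(V_{k-1})$ non-empty (and open, by continuity of $T$), and then I pick a non-empty open $V_k$ with $\overline{V_k}\subseteq W_k$ and $\mathrm{diam}(\overline{V_k})<1/k$. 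The sets $\overline{V_k}$ form a nested sequence of non-empty closed sets with diameters tending to $0$, so by completeness they intersect in a single point $x^\ast$. From $x^\ast\in W_k$ I read off $x^\ast\in V_{k-1}$ and $T^{\omega_{n_k}}x^\ast\in V_{k-1}$, whence $d(x^\ast,T^{\omega_{n_k}}x^\ast)<\mathrm{diam}(V_{k-1})\to 0$, i.e.\ $x^\ast\in\mathfrak L(\mu)$ for the subsequence $\mu:=(\omega_{n_k})_k$ of $\omega$. Finally, stationarity gives $\overline{\mathfrak L(\mu)}=\overline{\mathfrak L(\omega)}$, so $x^\ast\in\overline{\mathfrak L(\omega)}\cap V$, as wanted.

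For the inclusion $\supseteq$ in (2) I would take $x$ with $|N_\omega(B(x,\epsilon))|=\infty$ for all $\epsilon>0$ and show $x\in\mathfrak L(\omega)$. Here power-boundedness enters twice. First, $\{T^m:m\ge 0\}$ is equicontinuous (as established in the proof of Proposition \ref{esp}), so for each $\delta>0$ there is $\epsilon\in(0,\delta)$ with $d(a,b)<\epsilon\Rightarrow d(T^ma,T^mb)<\delta$ for all $m$; choosing, for the infinitely many $n\in N_\omega(B(x,\epsilon))$, a point $y\in B(x,\epsilon)$ with $T^{\omega_n}y\in B(x,\epsilon)$ and using the triangle inequality gives $d(T^{\omega_n}x,x)\le d(T^{\omega_n}x,T^{\omega_n}y)+d(T^{\omega_n}y,x)<2\delta$ for those $n$. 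Letting $\delta\downarrow 0$ and diagonalising produces a subsequence $\mu=(\omega_{n_k})_k$ with $T^{\mu_k}x\to x$, so $x\in\mathfrak L(\mu)$. Second, power-boundedness makes $\mathfrak L(\omega)$ closed (Proposition \ref{esp}); combined with stationarity ($\overline{\mathfrak L(\mu)}=\overline{\mathfrak L(\omega)}$) this gives $x\in\mathfrak L(\mu)\subseteq\overline{\mathfrak L(\omega)}=\mathfrak L(\omega)$.

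The hard part will be the converse of (1): one must manufacture an actual vector returning to itself along a subsequence out of the purely qualitative hypothesis that every open set recurs infinitely often along $\omega$. The nested-sets-plus-completeness construction is the device that accomplishes this, and the role of stationarity is precisely to trade the extracted subsequence $\mu$ for $\omega$ itself at the end; once that idea is in place, the nontrivial inclusion in (2) is a short equicontinuity estimate combined with the same trade (and the closedness of $\mathfrak L(\omega)$ in the power-bounded case).
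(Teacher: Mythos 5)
Your proof is correct and follows essentially the same route as the paper: the easy inclusions are the same one-line observation, the converse in (1) is the same nested-closed-sets/completeness construction producing a point of $\mathfrak{L}(\mu)$ for a subsequence $\mu$ and then invoking stationarity, and the converse in (2) is the same equicontinuity estimate combined with stationarity and the closedness of $\mathfrak{L}(\omega)$ for power-bounded $T$. No substantive differences to report.
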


\begin{proof} We begin with the first assertion. So assume that $\overline{\mathfrak{L}(\omega)}=X$. Then, for any non-empty open set $A$ and $x \in A \cap \mathfrak{L}(\omega)$, it follows that  $T^{\omega_{n}}x \in A$ eventually. Hence,  $|N_\omega(A)|=\infty$.

On the other hand, if $|N_\omega(A)|=\infty$ for any non-empty and open set $A$,
then, for any $x\in X$, $\epsilon > 0$ and $N\in \mathbb{N}$, there exist
$y \in B(x,\epsilon)$, $\delta > 0$ and $n > N$ such that
\[ \overline{B(y,\delta))} \subset B(x,\epsilon) \hbox{ and }
  T^{n}( B(y,\delta))  \subset B(x, \epsilon). \]
Hence, by inductively applying this observation, one obtains sequences ${x_{n}}$ and ${\epsilon_{n}}$ such that $x_{n}\in B(x,\epsilon)$, $\epsilon_{n} \downarrow 0$, $\overline{B(x_{n+1}, \epsilon_{n+1})}\subset B(x_{n}, \epsilon_{n})$ and $T^{\omega_{k_{n}}}(B(x_{n+1}, \epsilon_{n+1}))  \subset B(x_{n},\epsilon_{n})$ for some subsequence $\nu:=(\omega_{k_{n}})$ of $(\omega_{n})$. By Kuratowski's theorem, $\{y\} :=\bigcap_{n} B(x_{n}, \epsilon_{n})$ is nonempty. Furthermore,
 $y \in \mathfrak{L}(\nu)$ as $\lim_{n \to \infty} T^{\omega_{k_{n}}}(y) =y$ by construction. Then, as $\omega$ is stationary, $X= \overline{\mathfrak{L}(\nu)} =  \overline{\mathfrak{L}(\omega)}$. Finally, as $x$ and $\epsilon$ can be chosen arbitrarily,
$\mathfrak{L}(\omega)$ is dense in $X$.

We now proceed to the second assertion.  Set $\Omega:=  \{ x \in X: |N_\omega(B(x,\epsilon))| = \infty \hbox{ for all } \epsilon> 0 \}$. Then $y \in \Omega^c$ if and only if there exists $\epsilon > 0$ such that $N_\omega(B(y,\epsilon))$ is a finite set.  As $N_\omega(A)$ is finite for any $A \subset B(y,\epsilon)$, it follows that $\Omega^c$ is open and that $B(y,\epsilon) \subset \mathfrak{L}(\omega)^c$. Hence, $\Omega$ is a closed set and $\overline{\mathfrak{L}(\omega)} \subset \Omega$

So it remains to show that ${\mathfrak{L}(\omega)} = \Omega$. For $x \in \Omega$, one can see that there exists a sequence $(x_{k})_{k}\subset X$ and a subsequence $\omega_{n_{k}}$ of $\omega$ such that $x_{k}\in B(x,1/k)$ and $T^{\omega_{n_{k}}}x_{k}\in B(x, 1/k)$. As $T$ is power bounded, this implies that
$ \lim_{k\to \infty} T^{\omega_{n_k}}(x) =  x$. Therefore, $x\in \mathfrak{L}(\omega)$ since $\omega$ is stationary
\end{proof}

The concept of mixing recurrence was introduced by Amouch, Lakrima and Jadida in \cite{Amouch-Lak} as follows. They refer to a dynamical system $T: X \rightarrow X$ as  \emph{mixing recurrent} if, for every nonempty open subset $U$ of $X$, there exists a positive integer $N$ such that $\displaystyle {T^{n}(U)\cap U\neq \emptyset\;\hbox{for all}\; n\geq N}$. It is evident that each topologically mixing operator is mixing recurrent. As an immediate consequence, we obtain the following
relation between quasi-rigidity and  stationarity.

\begin{corollary}\label{cor:mixing-recurrent-stationary}
Let $T$ be a mixing recurrent operator on $X$ and let $\omega \in \mathfrak{C}$.
Then $\omega$ is a stationary sequence if and only if $\overline{\mathfrak{L}(\omega)} =X$.
\end{corollary}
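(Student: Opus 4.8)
The plan is to obtain both implications directly from Proposition~\ref{esp} and part~(1) of the preceding Proposition, noting that mixing recurrence is needed only for the nontrivial direction. The implication that $\overline{\mathfrak{L}(\omega)}=X$ forces $\omega$ to be stationary in fact uses nothing about mixing recurrence: if $\mu$ is any subsequence of $\omega$, then $\lim_n T^{\omega_n}x=x$ forces $\lim_k T^{\mu_k}x=x$, so $\mathfrak{L}(\omega)\subseteq\mathfrak{L}(\mu)$ (which is part of Proposition~\ref{esp}), whence $X=\overline{\mathfrak{L}(\omega)}\subseteq\overline{\mathfrak{L}(\mu)}$ and therefore $\overline{\mathfrak{L}(\mu)}=X=\overline{\mathfrak{L}(\omega)}$. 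As this holds for every subsequence $\mu$ of $\omega$, the sequence $\omega$ is stationary.

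For the converse, suppose $\omega$ is stationary. By part~(1) of the preceding Proposition it suffices to check that $|N_\omega(A)|=\infty$ for every non-empty open set $A\subseteq X$. Fix such an $A$. Since $T$ is mixing recurrent, there is $N=N(A)\in\mathbb{N}$ with $T^m(A)\cap A\neq\emptyset$ for all $m\geq N$; because $T^{-m}(A)\cap A\neq\emptyset$ is equivalent to $T^m(A)\cap A\neq\emptyset$, every integer $m\geq N$ is a return time of $A$. As $\omega_n\uparrow\infty$, all sufficiently large terms $\omega_n$ are return times of $A$, so $N_\omega(A)$ is infinite; the preceding Proposition then yields $\overline{\mathfrak{L}(\omega)}=X$.

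I do not expect a genuine obstacle here, since the statement is recorded as an immediate consequence; the only point that deserves a word is that the cited results presuppose $T$ to be recurrent. This is automatic, because a mixing recurrent operator is recurrent by a standard Baire-category argument: for a non-empty open set $U$ of small diameter, mixing recurrence produces $x\in U$ with $T^nx$ arbitrarily close to $x$, so $\mathrm{Rec}(T)$ is a dense $G_\delta$ set. Conceptually, the corollary merely records that for a mixing recurrent operator \emph{cofinitely many} integers are return times of every open set, so recurrence along \emph{any} unbounded index sequence already makes $N_\omega(A)$ infinite, and stationarity of $\omega$ then upgrades this, through part~(1) of the preceding Proposition, to density of $\mathfrak{L}(\omega)$.
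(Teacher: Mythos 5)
Your proof is correct and is exactly the argument the paper intends: the corollary is recorded there as an immediate consequence with no written proof, and your two directions --- the easy one via the inclusion $\mathfrak{L}(\omega)\subset\mathfrak{L}(\mu)$ from Proposition \ref{esp}, and the converse via part (1) of the preceding proposition combined with the observation that mixing recurrence makes all sufficiently large integers (hence all but finitely many $\omega_n$) return times of every non-empty open set --- supply precisely the missing details. Your side remark that mixing recurrence already forces $T$ to be recurrent by a Baire-category argument, so the hypotheses of the cited proposition are met, is also correct and is the one point genuinely worth spelling out.
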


We now discuss stationary sequences from a dual point of view by considering those elements in $X$ whose recurrent sequences are automatically stationary. That is, we consider the set
\begin{eqnarray*}
   \mathrm{St}(T):=\{x \in \mathrm{Rec}(T): \hbox{if }  x \in \mathfrak{L}(\omega) \hbox{ for some } \omega,\hbox{then } \omega\hbox{ is a stationary sequence}\}.
\end{eqnarray*}

\begin{proposition}\label{estacionario}
Let $T\in \mathcal{L}(X)$ be a recurrent operator of a Fréchet space $X$ and assume that $x \in \mathrm{Rec}(T)$. Then $x\in \mathrm{St}(T)$ if and only if
there exists a closed $T$-invariant subspace $Y$ of $X$ such that $Y=\overline{\mathfrak{L}(\omega)}$ for all $\omega \in \mathfrak{C}$ with $x \in  \mathfrak{L}(\omega)$.
\end{proposition}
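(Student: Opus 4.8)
The plan is to derive both implications directly from the definitions, using as the only real tools Proposition~\ref{esp} and a simple merging construction for sequences. Recall from Proposition~\ref{esp} that each $\overline{\mathfrak{L}(\nu)}$ is a closed $T$-invariant linear subspace and that $\mathfrak{L}(\nu)\subset\mathfrak{L}(\mu)$ whenever $\mu$ is a subsequence of $\nu$; also recall that $\nu\in\mathfrak{C}$ as soon as some nonzero vector lies in $\mathfrak{L}(\nu)$. Throughout I may assume $x\neq 0$, so that every strictly increasing sequence $\nu$ with $x\in\mathfrak{L}(\nu)$ automatically belongs to $\mathfrak{C}$.

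For the ``if'' direction, suppose a closed $T$-invariant subspace $Y$ with the stated property exists. Fix any $\omega\in\mathfrak{C}$ with $x\in\mathfrak{L}(\omega)$ and let $\mu$ be an arbitrary subsequence of $\omega$. By Proposition~\ref{esp}, $x\in\mathfrak{L}(\omega)\subset\mathfrak{L}(\mu)$, hence $\mu\in\mathfrak{C}$ and the hypothesis applies to both $\omega$ and $\mu$, giving $\overline{\mathfrak{L}(\mu)}=Y=\overline{\mathfrak{L}(\omega)}$. As $\mu$ was arbitrary, $\omega$ is stationary; and as $\omega$ was an arbitrary element of $\mathfrak{C}$ with $x\in\mathfrak{L}(\omega)$, we conclude $x\in\mathrm{St}(T)$.

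For the ``only if'' direction, assume $x\in\mathrm{St}(T)$. The key claim is that $\overline{\mathfrak{L}(\omega)}$ does not depend on the choice of $\omega\in\mathfrak{C}$ with $x\in\mathfrak{L}(\omega)$. Given two such sequences $\omega=(\omega_n)$ and $\omega'=(\omega'_n)$, let $\psi=(\psi_k)$ be the strictly increasing enumeration of the infinite set $\{\omega_n:n\in\mathbb{N}\}\cup\{\omega'_n:n\in\mathbb{N}\}\subset\mathbb{N}$. Then $\psi_k\uparrow\infty$, both $\omega$ and $\omega'$ are subsequences of $\psi$, and $T^{\psi_k}x\to x$: given $\varepsilon>0$ pick $N$ with $d(T^{\omega_n}x,x)<\varepsilon$ and $d(T^{\omega'_n}x,x)<\varepsilon$ for $n\geq N$; since $\psi_k\to\infty$, all sufficiently large $k$ satisfy $\psi_k\geq\max\{\omega_N,\omega'_N\}$, so that $\psi_k=\omega_n$ or $\psi_k=\omega'_m$ with $n\geq N$ or $m\geq N$, whence $d(T^{\psi_k}x,x)<\varepsilon$. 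Thus $x\in\mathfrak{L}(\psi)$ and $\psi\in\mathfrak{C}$; since $x\in\mathrm{St}(T)$, the sequence $\psi$ is stationary, and therefore $\overline{\mathfrak{L}(\omega)}=\overline{\mathfrak{L}(\psi)}=\overline{\mathfrak{L}(\omega')}$. This proves the claim. Now pick any $\omega^{(0)}\in\mathfrak{C}$ with $x\in\mathfrak{L}(\omega^{(0)})$, which exists because $x\in\mathrm{Rec}(T)$, and set $Y:=\overline{\mathfrak{L}(\omega^{(0)})}$; by Proposition~\ref{esp} this is a closed $T$-invariant subspace, and by the claim $Y=\overline{\mathfrak{L}(\omega)}$ for every $\omega\in\mathfrak{C}$ with $x\in\mathfrak{L}(\omega)$, as required.

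The only step that is not pure bookkeeping is the merging construction: one must verify that replacing $\omega$ and $\omega'$ by the sorted union of their ranges preserves recurrence of $x$, so that $\psi\in\mathfrak{C}$ and $\psi$ inherits stationarity from the hypothesis $x\in\mathrm{St}(T)$; once this is in place, stationarity of $\psi$ collapses $\overline{\mathfrak{L}(\omega)}$ and $\overline{\mathfrak{L}(\omega')}$ onto a single subspace. Everything else is unwinding the definitions of $\mathfrak{C}$, of stationarity, and of $\mathrm{St}(T)$, together with Proposition~\ref{esp}.
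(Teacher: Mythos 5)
Your proof is correct and follows essentially the same route as the paper: the "if" direction is the immediate unwinding of definitions, and the "only if" direction merges any two admissible sequences into their sorted union, notes that $x$ remains recurrent along the merged sequence, and invokes stationarity of the merged sequence to identify the two closures. You simply spell out in more detail the verification that the merged sequence again lies in $\mathfrak{C}$, which the paper asserts without proof.
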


\begin{proof}
Set $\mathcal{Z} := \{ \omega \in \mathfrak{C} : x \in {\mathfrak{L}(\omega)}\}$ and assume that $\omega,\theta \in \mathcal{Z}$.
Then, with $\omega \cup \theta$ referring to the strictly increasing sequence which contains the elements of $\omega$ and $\theta$, it follows that
$x \in \mathfrak{L}(\omega \cup \theta)$. Hence, if $x \in \mathrm{St}(T)$, then
$\omega \cup \theta$ is stationary and, in particular, $
\overline{\mathfrak{L}(\theta\cup \omega)}=\overline{\mathfrak{L}(\omega)}= \overline{\mathfrak{L}(\theta)}$.
Hence, the first direction follows for  $Y:=\overline{\mathfrak{L}(\theta)}$, which is $T$-invariant by Proposition \ref{esp}.
The other direction is immediate.
\end{proof}

It is natural to ask whether  $\mathrm{St}(T)$ is non-empty. 

\begin{proposition}\label{RnH}
Let $X$ be a non-separable complex Hilbert space. Then there exists a rigid operator $T:X \to X$ such that $\mathrm{St}(T) = \emptyset$.
\end{proposition}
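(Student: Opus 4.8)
\emph{Strategy.} The plan is to realise $T$ as a diagonal unitary operator on $\ell^{2}(\Gamma)$ for a suitable uncountable $\Gamma$, with the eigenvalues arranged so that one sequence of exponents drives \emph{all} of them to $1$ (giving rigidity), while for \emph{every} countable set of coordinates a different sequence drives exactly those coordinates --- and many more, but not all --- to $1$. Granted such a $T$, the proposition follows at once: rigidity gives $\mathrm{Rec}(T)=X$, so it suffices to show $x\notin\mathrm{St}(T)$ for every $x\in X$, and if $x\in\mathrm{St}(T)$ then Proposition \ref{estacionario} produces a closed $T$-invariant $Y$ with $Y=\overline{\mathfrak{L}(\omega)}$ for \emph{every} $\omega\in\mathfrak{C}$ with $x\in\mathfrak{L}(\omega)$; feeding in the global rigidity sequence forces $Y=X$, while feeding in the sequence attached below to a countable set $\supseteq\mathrm{supp}(x)$ gives $Y\subsetneq X$, a contradiction.

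\emph{The operator.} I would fix an almost disjoint family $\{A_{i}:i\in I\}$ of infinite subsets of $\{4,6,8,\dots\}$ with $|I|=\aleph_{1}$, put $\beta_{i}:=\sum_{m\in A_{i}}1/m!$, and note that $n!\sum_{m\in A_{i},\,m>n}1/m!\le 2/(n+1)\to0$, so $n!\beta_{i}\to 0\pmod 1$ for each $i$. Writing $\kappa=\dim X$, identify $X$ with $\ell^{2}(\Gamma)$, $|\Gamma|=\kappa$, fix a surjection $\pi\colon\Gamma\to I$ (possible since $\kappa\ge\aleph_{1}$), and let $T$ be the unitary with $Te_{\gamma}=e^{2\pi i\beta_{\pi(\gamma)}}e_{\gamma}$. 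For $\theta=(n!)_{n}$ and any $x$, dominated convergence over the countable set $\mathrm{supp}(x)$ (each term $|\lambda_{\gamma}^{n!}-1|^{2}|x_{\gamma}|^{2}\le 4|x_{\gamma}|^{2}$ tends to $0$) gives $T^{n!}x\to x$; hence $\mathfrak{L}(\theta)=X$, $T$ is rigid, and $\theta\in\mathfrak{C}$.

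\emph{The separation step.} Given a countable $S\subseteq\Gamma$, I would choose $i_{0}\in I\setminus\pi(S)$ (possible as $I$ is uncountable), let $M_{1}<M_{2}<\cdots$ enumerate $A_{i_{0}}$, and set $\omega_{n}:=M_{n}!/2\in\mathbb{N}$. Then one checks $\omega_{n}\beta_{i_{0}}\to\tfrac12\pmod 1$ and $\omega_{n}\beta_{i}\to 0\pmod 1$ for $i\neq i_{0}$ by splitting $\beta_{i}=\sum_{m\in A_{i}}1/m!$ at $M_{n}$: for $m<M_{n}$ the term $\omega_{n}/m!=\tfrac12\prod_{m<l\le M_{n}}l$ is an integer since the product contains the even number $M_{n}$; the tail over $m>M_{n}$ is at most $\omega_{n}\cdot 2/(M_{n}+1)!=(M_{n}+1)^{-1}\to0$; and the term $m=M_{n}$ contributes $1/2$ when $i=i_{0}$, while for $i\neq i_{0}$ it is absent for all large $n$ as $A_{i}\cap A_{i_{0}}$ is finite and $M_{n}\to\infty$. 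Thus $\lambda_{\gamma}^{\omega_{n}}\to1$ exactly when $\pi(\gamma)\neq i_{0}$, and one more application of dominated convergence gives $\mathfrak{L}(\omega)=\ell^{2}(\Gamma\setminus\pi^{-1}(i_{0}))$: a closed (Proposition \ref{esp}), nonzero subspace (so $\omega\in\mathfrak{C}$) that contains $\ell^{2}(S)$ but is not dense in $X$ (as $\pi^{-1}(i_{0})\neq\emptyset$). Taking $S=\mathrm{supp}(x)$ for an arbitrary $x\in X$ then closes the argument as in the first paragraph, and $\mathrm{St}(T)=\emptyset$.

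\emph{Main obstacle.} The delicate part is the number-theoretic bookkeeping in the separation step: I must pick the hypotheses on $\{A_{i}\}$ (even integers, almost disjoint) so that every cross term $\omega_{n}\beta_{i}$ with $i\neq i_{0}$ really is an integer up to a tail that vanishes uniformly enough to survive the limit. A secondary point is that $X$ may have arbitrary uncountable Hilbert dimension, not just $\aleph_{1}$; this is handled by routing everything through an index set $I$ of size $\aleph_{1}$ via the surjection $\pi$, which forces $T$ to have repeated eigenvalues but leaves each $\mathfrak{L}(\omega)$ a $\pi$-saturated coordinate subspace, so the reasoning is unaffected.
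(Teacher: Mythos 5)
Your proof is correct, and its overall architecture coincides with the paper's: a diagonal unitary on an $\ell^{2}$-space over an uncountable index set, rigidity witnessed by one universal sequence, and, for each vector, a second sequence that converges to $1$ on the (countable) support but not on all coordinates, so that Proposition \ref{estacionario} rules out membership in $\mathrm{St}(T)$. Where you genuinely diverge is in how the eigenvalues and the separating sequences are produced. The paper works abstractly: it posits an uncountable subgroup $G\subset\mathbb{T}$ with $\lambda^{a_n}\to 1$ for all $\lambda\in G$, picks for each countable family $\{\lambda_k\}$ an element $\lambda\in G$ that is log-rationally independent of it, and invokes minimality of torus rotations (Kronecker) plus a diagonal argument to manufacture $(\theta_k)$ with $\lambda^{\theta_k}\to\beta\neq 1$ and $\lambda_i^{\theta_k}\to 1$ for all $i$. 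You instead build everything explicitly: eigenvalues $e^{2\pi i\beta_i}$ with $\beta_i=\sum_{m\in A_i}1/m!$ over an almost disjoint family of even integers, and the separating sequence $\omega_n=M_n!/2$, with the convergence $\omega_n\beta_i\to 0$ or $\tfrac12 \pmod 1$ verified by elementary factorial divisibility rather than equidistribution. Your route buys constructiveness and avoids the paper's mildly delicate simultaneous-approximation step; it also treats arbitrary uncountable Hilbert dimension cleanly via the surjection $\pi:\Gamma\to I$, whereas the paper's ``without loss of generality $X=\ell^{2}(G)$'' glosses over the case $\dim X\neq |G|$. The paper's abstract version is shorter on the construction side but pushes the work into the Kronecker-type lemma. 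Both arguments are sound.
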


\begin{proof}
Assume that $\psi:=(a_{n})_{n}$ is a sequence such that $G=\{\lambda\in \mathbb{T}: \lambda^{a_{n}}\longrightarrow 1\}$ forms an uncountable group. Without loss of generality, we can assume that the Hilbert space is $\ell^{2}(G)$. So assume that $\{x_{\lambda}\}_{\lambda\in G}$ is an orthonormal basis and that $T$ is defined through
\begin{eqnarray*}
T\left(\sum_{\lambda\in G}c_{\lambda}x_{\lambda}\right) :=\sum_{\lambda\in G}\lambda c_{\lambda} x_{\lambda}.
\end{eqnarray*}
Then $T$ is a surjective isometry with $\hbox{span}\{x_{\lambda}:\lambda\in G\}\subset \mathfrak{L}(\psi)$. Hence $T$ is rigid by Proposition \ref{esp}.

We now make use of the minimality of the torus rotation (see, for example, \cite{Viana}). In particular, we will refer to a subset $M$ of $\mathbb{T}$ as log-rationally independent if for any finite subset $\{\lambda_1 ,\ldots \lambda_k\} \subset M$ and $n_1, \ldots, n_k \in \mathbb{Z}$, the identity $\prod_i \lambda_i^{ n_i} = 1$ implies that $n_1= \cdots = n_k =0$ (see also Proposition \ref{ortho}).
So choose a sequence $(\lambda_{k})_{k\in \mathbb{N}}$ in $G$ and assume that $x \in \overline{\mathrm{span}(\{x_{\lambda_k}\})}$. Then, as $G$ is uncountable, there exists $\lambda\in G$ such that $\lambda$ is not a root of unity and such that $\lambda$ is log-rationally independent from any finite subset of $\{\lambda_k\}$. Hence, for any $k \in \mathbb{N}$, there exists $I_k \subset \{1,\ldots, k\}$  such that $\{\lambda_i : i \in I_k\} \cup \{\lambda \} $ is log-rationally independent and that each $\lambda_j$ with $j \leq k$, $j \notin I_k$ is rationally dependent from $\{\lambda_i : i \in I_k\} \cup \{\lambda \} $.

Now fix $\beta \in  \mathbb{T}\textstyle{\setminus\{1\}}$. As log-rationally independence implies that the torus rotation is minimal and the remaining $\lambda_i$ can be written as rational combinations of the elements in $\{\lambda_i : i \in I_k\} \cup \{\lambda \}$. Hence, by minimality, there exists $\theta_k$ arbitrary large such that
\[ |\lambda^{\theta_k} - \beta| < 1/k, \quad |\lambda_i^{\theta_k} - 1| < 1/k  \; \forall i = 1, \ldots, k.\]
Hence, there exists a strictly increasing sequence $(\theta_{k})$
with $\lim_k \lambda^{\theta_k} = \beta$ and $\lim_k \lambda_i^{\theta_k} = 1 $ for all $i \geq 1$.
In particular,   $x_{\lambda} \notin \mathfrak{L}(\theta)$. Hence, $\mathfrak{L}(\theta) \neq \mathfrak{L}(\psi)$ which implies that $x \notin \mathrm{St}(T)$ by Proposition \ref{estacionario}.
\end{proof}

\subsection{Hyper-recurrent operators}
We now introduce the class of hyper-recurrent operators. For a recurrent operator $T$, set
\begin{eqnarray*}\label{hyper-sta}
     \mathrm{Hr}(T):=\{x \in \mathrm{Rec}(T): \hbox{if }  x \in \mathfrak{L}(\omega) \hbox{ for some } \omega,\hbox{then } \overline{\mathfrak{L}(\omega)} =X\}.
\end{eqnarray*}
In order to give an example, it suffices to consider hypercyclic operators. In this case, each hypercyclic vector is an element of $\mathrm{Hr}(T)$.
\begin{definition}\label{def:hyper-recurrent}
Let $T: X\rightarrow X$ be a recurrent operator on a Fréchet space $X$. A recurrent vector $x\in X$ is {hyper-recurrent} if for every $\omega \in \mathfrak{C}$ with $x\in \mathfrak{L}(\omega)$ we have that $\overline{\mathfrak{L}(\omega)}=X$. The set of hyper-recurrent vectors is denoted by $\mathrm{Hr}(T)$ and we say that $T$ is a hyper-recurrent operator if there exists a  hyper-recurrent vector for $T$.
\end{definition}

In other words, a recurrent vector $x$ is hyper-recurrent if $\lim_n T^{a_n} (x) = x$ for some sequence implies that $T^{a_n}$ converges pointwise to the identity on a dense set in $X$. As a consequence, if $T$ is a hyper-recurrent operator then $T$ is quasi-rigid.
Furthermore, it follows from Proposition \ref{estacionario} for $T$ with $\mathrm{St}(T) \neq \emptyset$ that
the restriction of $T$ to $Y$ as given in there is hyper-recurrent.

Furthermore, hypercyclic operators are by far not the only examples of hyper-recurrent operators as shown in Section \ref{sec:exploring} below. However, by Proposition \ref{RnH}, there exist rigid operators which are not hyper-recurrent.
We now collect some basic properties of hyper-recurrent operators.

\begin{proposition}
    If $T$ is hyper-recurrent then $T^{m}$ and $T_{m}$ are also hyper-recurrent for every positive integer $m$.
\end{proposition}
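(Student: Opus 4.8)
The plan is to establish the two assertions separately, handling $T^m$ first and then $T_m$, in each case by relating the recurrent sequences of the new operator to those of $T$ and transporting a hyper-recurrent vector of $T$ along this relation. Throughout, let $x \in \mathrm{Hr}(T)$ be a fixed hyper-recurrent vector; by definition $x \in \mathrm{Rec}(T)$ and $\overline{\mathfrak{L}_T(\omega)} = X$ whenever $x \in \mathfrak{L}_T(\omega)$.

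For $T^m$: the key observation is that $(T^m)^n = T^{mn}$, so if $\theta = (\theta_n) \in \mathfrak{C}(T^m)$ witnesses recurrence of some vector, then the sequence $m\theta := (m\theta_n) \in \mathfrak{C}(T)$ and $\mathfrak{L}_{T^m}(\theta) = \mathfrak{L}_T(m\theta)$. First I would check that $x$ itself is a recurrent vector for $T^m$: since $x \in \mathrm{Rec}(T)$, pick $\omega \in \mathfrak{C}$ with $T^{\omega_n}x \to x$; passing to a subsequence along which $\omega_n \bmod m$ is constant, say $\equiv r$, one has $T^{\omega_n - r}x = T^{-r}(T^{\omega_n}x) \to T^{-r}x$ — wait, $T$ need not be invertible, so instead apply $T^{m-r}$ (or the appropriate shift) to get a subsequence of the form $m \cdot (\text{integers})$; more cleanly, along the constant-residue subsequence write $\omega_n = m q_n + r$ and note $T^{m q_n}(T^r x) \to T^r x$, so $T^r x \in \mathrm{Rec}(T^m)$, and since $x \in \mathrm{Hr}(T)$ and $x \in \mathfrak{L}_T(\omega)$ forces $\overline{\mathfrak{L}_T(\omega)} = X$, one can in fact argue directly that $x \in \mathrm{Rec}(T^m)$ by using that $\mathfrak{L}_T(\omega)$ is $T$-invariant and dense, picking a recurrent-for-$T^m$ vector inside it — here the cleanest route is just the residue-class argument showing some $T^j x$ is $T^m$-recurrent and then noting Proposition~\ref{esp} gives $\mathfrak{L}_T$ a linear-subspace structure. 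Once $x \in \mathrm{Rec}(T^m)$ is settled, for any $\theta \in \mathfrak{C}(T^m)$ with $x \in \mathfrak{L}_{T^m}(\theta)$ we have $x \in \mathfrak{L}_T(m\theta)$, hence $X = \overline{\mathfrak{L}_T(m\theta)} = \overline{\mathfrak{L}_{T^m}(\theta)}$, so $x \in \mathrm{Hr}(T^m)$ and $T^m$ is hyper-recurrent.

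For $T_m$ acting on $X^m$: the natural candidate hyper-recurrent vector is $\mathbf{x} := (x, x, \dots, x)$, the diagonal vector. Since $T_m^n = \bigoplus_{i=1}^m T^n$, for a sequence $\theta$ one has $\mathfrak{L}_{T_m}(\theta) = \mathfrak{L}_T(\theta)^m$ (a vector $(y_1,\dots,y_m)$ returns along $\theta$ iff each coordinate does). In particular $\mathbf{x} \in \mathfrak{L}_{T_m}(\omega)$ iff $x \in \mathfrak{L}_T(\omega)$, so $\mathbf{x} \in \mathrm{Rec}(T_m)$ (as $x \in \mathrm{Rec}(T)$), and whenever $\mathbf{x} \in \mathfrak{L}_{T_m}(\theta)$ we get $x \in \mathfrak{L}_T(\theta)$, hence $\overline{\mathfrak{L}_T(\theta)} = X$ by hyper-recurrence of $x$, and therefore $\overline{\mathfrak{L}_{T_m}(\theta)} = \overline{\mathfrak{L}_T(\theta)^m} = \overline{\mathfrak{L}_T(\theta)}^{\,m} = X^m$. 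Thus $\mathbf{x} \in \mathrm{Hr}(T_m)$ and $T_m$ is hyper-recurrent.

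The only genuinely delicate point is the $T^m$-recurrence of the chosen vector, since $T$ may fail to be invertible on $X$ (as emphasized after Proposition~\ref{esp}), so one cannot simply divide exponents by $m$; the remedy is the residue-class pigeonhole argument above, applied to a suitable forward iterate $T^j x$, combined with the fact that $\mathfrak{L}_T(\omega)$ is a $T$-invariant closed-under-the-relevant-operations subspace so that hyper-recurrence of $x$ propagates to that iterate. Everything else is a bookkeeping identity relating iterates of $T^m$ or $T_m$ to iterates of $T$, together with the elementary fact that the closure of a finite product of a set equals the product of its closures.
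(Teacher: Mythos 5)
Your treatment of $T_m$ is correct and essentially the paper's argument: you use the diagonal vector $(x,\dots,x)$ where the paper uses $(x,0,\dots,0)$, but in both cases the point is that $\mathfrak{L}_{T_m}(\theta)=\mathfrak{L}_T(\theta)^m$, so membership of the chosen vector in $\mathfrak{L}_{T_m}(\theta)$ forces $x\in\mathfrak{L}_T(\theta)$ and hence density of $\mathfrak{L}_T(\theta)^m$ in $X^m$. Likewise, for $T^m$ the identity $\mathfrak{L}_{T^m}(\theta)=\mathfrak{L}_T(m\theta)$ and the transfer of the density conclusion are fine and match the paper.

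The genuine gap is the step you yourself flag as delicate: showing that $x$ (or some iterate of it) is recurrent for $T^m$. Your residue-class pigeonhole does not do this. Writing $\omega_n=mq_n+r$ along a constant-residue subsequence, what you actually get is $(T^m)^{q_n}(T^rx)=T^{\omega_n}x\to x$, \emph{not} $\to T^rx$; so the computation shows only that $x$ lies in the $\omega$-limit set of $T^rx$ under $T^m$, which is not recurrence of $T^rx$ (nor of $x$) for $T^m$. No appeal to the linear-subspace structure of $\mathfrak{L}_T(\omega)$ repairs this, because without equicontinuity you cannot pass from $T^{\omega_n}y\to y$ to control of $T^{\omega_n-\omega_{n-1}}y$. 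The inclusion $\mathrm{Rec}(T)\subset\mathrm{Rec}(T^m)$ is a genuine theorem (the linear analogue of the classical fact that a $T$-recurrent point is $T^m$-recurrent), and the paper handles it by citing Proposition 2.3 of \cite{Cos} rather than by a pigeonhole argument. Your proof becomes correct if you replace the residue-class step by that citation (or by an actual proof of that proposition); as written, the $T^m$ half does not go through, and indeed without this input you cannot even conclude that $T^m$ is a recurrent operator.
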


\begin{proof}
Let $x$ be a hyper-recurrent vector for $T$. According to \cite[Proposition 2.3]{Cos}, $x$ is recurrent for $T^{m}$ for each positive integer $m$. Now, consider a fixed positive integer $m$. If $(T^{m})^{\omega_{n}}x$ converges to $x$ for some sequence $\omega:=(\omega_{n})_{n}$, then $T^{m\omega_{n}}x$ converges to $x$. Since $x$ is hyper-recurrent for $T$, we can conclude that $(T^{m})^{\omega_{n}}$ converges pointwise to the identity operator on a dense set in $X$.

On the other hand, it is clear that the $m-$tupel $(x,0, \ldots, 0)$ is recurrent for $T_{m}$. Furthermore, if $T_{m}^{\omega_{n}}(x,0,\ldots,0)$ converges to $(x,0, \ldots,0)$, then $T^{\omega_{n}}$ converges pointwise to the identity on a dense set $X_{0} \subset X$ by hyper-recurrence of $T$. It is then clear that $T_{m}^{\omega_{n}}$ converges to the identity on the dense set $X_{0}^{m}$. Hence, $T_{m}$ is hyper-recurrent.
\end{proof}

From Corollary \ref{cor:mixing-recurrent-stationary}, one immediately obtains the following for mixing recurrent operators, using the fact that  $\mathrm{St}(T) \neq \emptyset$ if $T$ is topologically mixing.

\begin{proposition}
Let $X$ be a Fréchet space and $T\in \mathcal{L}(X)$ be a recurrent operator on $X$. If $T$ is mixing recurrent, then
$\mathrm{St}(T)=\mathrm{Hr}(T)$.
In particular, if $T$ is topologically mixing then $\mathrm{St}(T)=\mathrm{Hr}(T)$ is dense in $X$.
\end{proposition}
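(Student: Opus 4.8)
The identity $\mathrm{St}(T)=\mathrm{Hr}(T)$ is essentially a restatement of Corollary \ref{cor:mixing-recurrent-stationary}: for a mixing recurrent operator, ``$\omega$ is stationary'' and ``$\overline{\mathfrak{L}(\omega)}=X$'' are equivalent conditions on a sequence $\omega\in\mathfrak{C}$, so the condition defining membership of $x$ in $\mathrm{St}(T)$ (every $\omega\in\mathfrak{C}$ with $x\in\mathfrak{L}(\omega)$ is stationary) and the condition defining membership of $x$ in $\mathrm{Hr}(T)$ (every such $\omega$ satisfies $\overline{\mathfrak{L}(\omega)}=X$) coincide verbatim. The plan is therefore to spell out the two inclusions and then to treat the topologically mixing case, where density is an extra ingredient.

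Concretely: if $x\in\mathrm{Hr}(T)$ and $x\in\mathfrak{L}(\omega)$ for some $\omega\in\mathfrak{C}$, then $\overline{\mathfrak{L}(\omega)}=X$ by Definition \ref{def:hyper-recurrent}, hence $\omega$ is stationary by Corollary \ref{cor:mixing-recurrent-stationary}; since $\omega$ was an arbitrary sequence through $x$, this means $x\in\mathrm{St}(T)$. Conversely, if $x\in\mathrm{St}(T)$ and $x\in\mathfrak{L}(\omega)$, then $\omega$ is stationary by the definition of $\mathrm{St}(T)$, hence $\overline{\mathfrak{L}(\omega)}=X$ again by Corollary \ref{cor:mixing-recurrent-stationary}; so $x\in\mathrm{Hr}(T)$. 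Thus $\mathrm{St}(T)=\mathrm{Hr}(T)$.

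For the final claim, a topologically mixing operator is mixing recurrent (as remarked before Corollary \ref{cor:mixing-recurrent-stationary}), so the first part yields $\mathrm{St}(T)=\mathrm{Hr}(T)$, and it only remains to see that this set is dense. Here I would invoke that a topologically mixing operator is topologically transitive, hence hypercyclic with a dense (indeed $G_\delta$) set of hypercyclic vectors on a separable Fréchet space, together with the observation made just before Definition \ref{def:hyper-recurrent} that every hypercyclic vector lies in $\mathrm{Hr}(T)$; density of $\mathrm{Hr}(T)$ follows immediately. I do not foresee any genuine obstacle: the equality part is purely formal once Corollary \ref{cor:mixing-recurrent-stationary} is in hand, and the only non-formal point is this density step (which is precisely why the stronger hypothesis of topological mixing, rather than mere mixing recurrence, is used, and why one needs separability here — recall Proposition \ref{RnH}), and it is standard.
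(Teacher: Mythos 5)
Correct, and essentially the paper's approach: the paper states this proposition without proof as an immediate consequence of Corollary \ref{cor:mixing-recurrent-stationary}, your two inclusions are exactly that formal argument, and your density step (topological mixing implies transitivity, hence by Birkhoff a dense set of hypercyclic vectors, each of which lies in $\mathrm{Hr}(T)$) is the natural way to substantiate the authors' remark that $\mathrm{St}(T)\neq\emptyset$ when $T$ is topologically mixing. Your caveat that separability of $X$ is needed for the density conclusion is well taken, since the proposition as stated omits that hypothesis.
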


On the other hand, in case of an \textcolor{black}{Augé-Tapia} operator, $\mathrm{St}(T)$ and $\mathrm{Hr}(T)$ might differ. In here, $Y_{q}$ stands for the invariant set given by Proposition \ref{estacionario}, for $q\in  \mathrm{St}(T)$.

\begin{theorem}\label{Hr-sta}
Let $X$ be a separable infinite-dimensional complex Banach space. Then, there exists a recurrent operator $T$ acting on $X$ such that $\{q\in \textit{St}(T): \mathrm{dim}(X/Y_{q})=1\}$
is dense in $X$, and the set $Hr(T)$ is empty.
\end{theorem}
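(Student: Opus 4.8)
The plan is to take the Augé–Tapia operator $T$ of finite type $d$ defined through \eqref{eq:auge-tapia-operator}, with respect to an asymptotically separated set $F \subset \mathbb{C}^d$ with associated sequence $(g_k)$ in $X^*$ such that both $F$ and $F^c$ are dense in $V = \mathrm{span}(e_1,\ldots,e_d)$, and additionally arrange that $F$ is a linear hyperplane minus a point, or more precisely that its "trace" on lines through the origin is a one-codimensional phenomenon. Concretely, I would like to choose $F$ so that for a dense set of $x$, the recurrent sequences $\omega$ for $x$ force $\overline{\mathfrak{L}(\omega)}$ to be a fixed hyperplane $Y_x$ with $\dim(X/Y_x)=1$, while no single vector can force $\overline{\mathfrak{L}(\omega)}=X$.

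First I would analyze $\mathfrak{L}(\omega)$ for such an operator. By the computation of $T^n$ recorded before \eqref{eq:convergence-to-0-vs-recurrence} and the summability condition on $\sum_{k}\tfrac{m_{k-2}}{m_{k-1}}\|g_k\|$, one sees that for $\theta=(\theta_\ell)\in\mathfrak{C}$ the condition $x\in\mathfrak{L}(\theta)$ is governed by (i) the behavior of $S^{\theta_\ell}$ on the coordinates $k>d$, i.e. whether $\lambda_k^{\theta_\ell}\to 1$, together with (ii) the term $g_k(\mathbb{P}x)$; this is exactly the content of \eqref{eq:convergence-to-0-vs-recurrence} when $\theta_\ell$ runs through multiples of the form $2m_{\theta_n-1}$. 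The key structural fact I would extract is that for a residual set of $x$ (those $x$ avoiding the countably many affine hyperplanes $\{e_k^*x=-g_k(\mathbb{P}x)/((\lambda_k-1)m_{k-1})\}$, $k>d$, as in the proof of Proposition \ref{T2}), any $\omega$ with $x\in\mathfrak{L}(\omega)$ must have $\lambda_k^{\omega_\ell}\to 1$ for all $k>d$, hence $\mathrm{span}(e_k:k>d)\subset\mathfrak{L}(\omega)$, and in addition $\mathbb{P}(\mathfrak{L}(\omega))\subset F$ and $g_{\text{relevant}}(\mathbb{P}x)=0$ whenever $x\in\mathfrak{L}(\omega)$; consequently $\overline{\mathfrak{L}(\omega)}=\mathbb{P}^{-1}(\overline{\mathbb{P}(\mathfrak{L}(\omega))})$. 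Thus the whole problem reduces to controlling the possible closures $\overline{\mathbb{P}(\mathfrak{L}(\omega))}$ inside the $d$-dimensional space $V$.

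Next I would engineer $F$ so that these closures are exactly the hyperplanes of $V$. Choose $F$ to be an asymptotically separated, dense, codense subset of $V$ whose intersection with a dense set of lines is such that $\mathbb{P}(\mathfrak{L}(\omega))$, being a subgroup-like subset closed under the $T$-action and sitting inside $F$, is forced to be contained in a hyperplane; simultaneously, by choosing the sequence $(g_k)$ so that along suitable multiples the phases $g_k(\mathbb{P}x)$ can be driven to $0$ exactly on any prescribed hyperplane, I can realize each coordinate hyperplane of $V$ as $\overline{\mathbb{P}(\mathfrak{L}(\omega))}$ for an appropriate $\omega\in\mathfrak{C}$. This gives: for a dense (residual) set of $x$, writing $H_x$ for the hyperplane of $V$ determined by $x$, every recurrent $\omega$ for $x$ satisfies $\overline{\mathfrak{L}(\omega)}=\mathbb{P}^{-1}(H_x)=:Y_x$, which is closed, $T$-invariant, and of codimension $1$ in $X$; by Proposition \ref{estacionario} this says precisely $x\in\mathrm{St}(T)$ with $Y_x=Y_q$ and $\dim(X/Y_q)=1$. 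Density of this set of $q$'s follows because it is residual (it contains the residual set $X_0$ from Proposition \ref{T2} intersected with the complements of the finitely many hyperplanes defining $H_x$, and the assignment $x\mapsto H_x$ can be taken locally constant on a dense open set). Finally, $\mathrm{Hr}(T)=\emptyset$ because a hyper-recurrent vector $x$ would require $\overline{\mathfrak{L}(\omega)}=X$ for every $\omega$ with $x\in\mathfrak{L}(\omega)$; but $x$ is itself recurrent, so it lies in some $\mathfrak{L}(\omega)$, and for that $\omega$ we have $\mathbb{P}(\mathfrak{L}(\omega))\subset F\subsetneq V$, so $\overline{\mathfrak{L}(\omega)}\subset\mathbb{P}^{-1}(\overline F)\subsetneq X$ unless $\overline F=V$ — and here I must be careful, since $F$ is dense, so $\overline F=V$; the actual obstruction is finer: one uses that $\mathbb{P}(\mathfrak{L}(\omega))$ lies in the hyperplane $H_x$, not merely in $F$, which is where the structural lemma of the previous paragraph (closedness of $\mathbb{P}(\mathfrak{L}(\omega))$ under the torus action on coordinates $\le d$, forcing it into a proper subgroup/subspace) does the work.

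The main obstacle I anticipate is exactly this last point: showing that $\overline{\mathbb{P}(\mathfrak{L}(\omega))}$ is always a \emph{proper} subspace of $V$, i.e. ruling out that the $\omega$'s conspire to make $\mathbb{P}(\mathfrak{L}(\omega))$ dense in $V$ for some recurrent vector. This requires pinning down how the finitely many "active" phases $\exp(i\pi/m_k)$ for $d<k$ small, together with the linear forms $g_k\circ\mathbb{P}$ restricted to $V$, interact — one needs that the simultaneous convergence $\lambda_k^{\omega_\ell}\to 1$ (all $k>d$) plus $g_k(\mathbb{P}x)\to$ (values forcing recurrence) constrains $\mathbb{P}x$ to a hyperplane determined by the finitely many $g_k$'s that are "not yet summed away." Carefully choosing $F$ and the ordering of the $g_k$ so that this constraint is a single linear equation (hence codimension exactly $1$, not $0$ and not $\geq 2$) is the delicate part; Tapia's Corollary 2.12 provides the needed dense, codense asymptotically separated set, and the finite-dimensionality of $V$ keeps the torus/linear-algebra bookkeeping manageable.
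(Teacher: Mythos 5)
Your overall architecture matches the paper's: use an Aug\'e--Tapia operator, restrict to the residual/dense set of vectors $q$ avoiding the affine hyperplanes $\{e_k^*x=-g_k(\mathbb{P}x)/((\lambda_k-1)m_{k-1})\}$ so that any $\omega$ with $q\in\mathfrak{L}(\omega)$ forces $\lambda_k^{\omega_n}\to 1$ and hence $W:=\mathrm{span}(e_k:k>d)\subset\mathfrak{L}(\omega)$, then read off $\overline{\mathfrak{L}(\omega)}=\overline{W}\oplus\mathbb{P}(\mathfrak{L}(\omega))$ and invoke Proposition \ref{estacionario} for stationarity and Theorem \ref{ans-neg} for $\mathrm{Hr}(T)=\emptyset$. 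But the step you yourself flag as ``the delicate part'' --- pinning $\overline{\mathbb{P}(\mathfrak{L}(\omega))}$ down to a subspace of codimension exactly $1$ in $V$ --- is a genuine gap as written, and your proposed fix (engineering $F$ to be ``a linear hyperplane minus a point'' or to interact with lines in a prescribed way, and rigging the $g_k$ so that phases vanish exactly on prescribed hyperplanes) is both unnecessary and partly infeasible: an asymptotically separated $F$ must here be dense and codense in $V$ for the operator to be recurrent with the required properties, and a hyperplane minus a point is not dense in $V$ once $\dim V\geq 2$. Moreover you never actually establish that $\mathbb{P}(\mathfrak{L}(\omega))$ cannot be dense in $V$; you only gesture at a ``structural lemma'' about subgroup-like sets.

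The paper closes this gap with two cheap observations that make all the engineering evaporate. First, $\mathfrak{L}(\omega)$ is a linear subspace (Proposition \ref{esp}), so $\mathbb{P}(\mathfrak{L}(\omega))$ is a \emph{linear subspace} of the finite-dimensional $V$ contained in $F\subsetneq V$; a linear subspace properly contained in $V$ is automatically a proper subspace, so density of $F$ is irrelevant --- this is exactly the argument of Theorem \ref{ans-neg}. Second, one simply takes $d=2$: then $\mathbb{P}(\mathfrak{L}(\omega))$ is a proper linear subspace of a two-dimensional space containing the nonzero vector $p=\mathbb{P}(q)$ (since $W\subset\mathfrak{L}(\omega)$ and $q\in\mathfrak{L}(\omega)$ give $p\in\mathfrak{L}(\omega)$ by linearity), hence it equals $\mathrm{span}(p)$ and $Y_q=\mathrm{span}(p)\oplus\overline{W}$ has codimension exactly $1$, with no condition on $F$ beyond density of $F$ and $F^c$ and the generic choice $G\subset F\setminus\bigcup_k\ker(g_k)$. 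You should replace your construction of a bespoke $F$ by this choice of $d=2$ and the linear-subspace observation; as it stands, the codimension-one claim, and therefore the density of $\{q\in\mathrm{St}(T):\dim(X/Y_q)=1\}$, is not proved.
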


\begin{proof} We will show that the \textcolor{black}{Augé-Tapia} operator of Proposition \ref{T2} for $d=2$ satisfies this property. That is, we consider the operator defined through
\begin{eqnarray*}
     Tx=Sx+\sum_{k=3}^{\infty}\frac{1}{m_{k-1}} g_{k}(\mathbb{P}x)e_{k},
\end{eqnarray*}
where $\mathbb{P}: X \to V:= \hbox{span}(\{e_{1}, e_{2}\})$ is the canonical projection, $(g_k)$ is the sequence in $V^\ast$ associated with an asymptotically separated set $F \subset V$ and the $m_{k}$ are as above. Furthermore, without loss of generality, we assume that $g_k \neq 0$ for all $k \in \mathbb{N}$.

Now choose a countable and dense set $G \subset V$ contained within the residual set defined by
\begin{eqnarray*}
\displaystyle{ G\subset F \cap \left(V\setminus \left(\bigcup_{k=1}^{\infty} \mathrm{ker}(g_{k})\right)\right)}.
\end{eqnarray*}
In analogy to the proof of Proposition \ref{T2}, we now consider the set
\begin{eqnarray*}
X_{0}=\left\{p+\textstyle\sum_{j=3}^{\ell}\alpha_{j}e_{j}: p\in G, \ell \geq 3, \alpha_{k} \neq -\frac{g_{k}(p)}{(\lambda_{k}-1)m_{k-1}} \, \forall k \geq 3 \right\}.
\end{eqnarray*}
Observe that $X_0$ is dense in $X$ and that $X_0 \subset \mathrm{Rec}(T)$ by construction of the $m_k$. We will now show that $X_{0}\subset \mathrm{St}(T)$ and $\hbox{dim}(X/Y_{q})=1$ for each $q$ that belongs to $X_{0}$. In order to do so, fix $ q =  p+\sum_{j=3}^{\ell}\alpha_{j}e_{j} \in X_0$.  If $q\in \mathfrak{L}(\omega)$ for some $\omega:=(\omega_{n})_{n}$, then
\begin{eqnarray*}
T^{\omega_{n}}(q) - q =
\sum_{k=3}^{\ell} (\lambda_{k}^{\omega_{n}}-1) \left( \alpha_{k}  + \frac{g_{k}(p) }{ (\lambda_k - 1) m_{k-1}}\right) e_{k}    +
\sum_{k=\ell+1}^{\infty} \frac{(\lambda_{k}^{\omega_{n}}-1) g_{k}(p) }{ (\lambda_k - 1) m_{k-1}} e_k  \xrightarrow{n\rightarrow \infty} 0
\end{eqnarray*}
By applying $e_{k}^{*}$ for $3 \leq k \leq \ell$, this implies as in the proof of Proposition \ref{T2} through the construction of $X_0$, that  $\lambda_{k}^{\omega_{n}}$ has to converge to $1$. On the other hand, for $k > l$, it follows that
\[  \frac{(\lambda_{k}^{\omega_{n}}-1) g_{k}(p) }{ (\lambda_k - 1) m_{k-1}}   \xrightarrow{n\rightarrow \infty} 0. \]
Hence, $\lambda_{k}^{\omega_{n}}$ converges to $1$ as $n\rightarrow \infty$ for each $k\geq 3$.
In particular, as $T^{\omega_{n}}(e_k)  = \lambda_{k}^{\omega_{n}} e_k$ for each $k\geq 3$, it follows that $W:=\mathrm{span}(e_{k}:k\geq 3) \subset \mathfrak{L}(\omega)$.  Hence,
$\overline{\mathfrak{L}(\omega)}=\text{span}(p)\oplus \overline{W}$ by Theorem \ref{ans-neg}. This is, $Y_{q}=\text{span}(p)\oplus \overline{W}$ and $\text{dim}(X/Y_{q})=1$.
\end{proof}

A further remarkable fact is that hyper-recurrent operators are not so far from beeing quasi-rigid, as shown in the next result.

\begin{theorem} \label{theo:factor}
For  a recurrent operator  $T$ on a separable Fréchet or Banach space $X$, the following assertions are equivalent.
\begin{enumerate}
    \item The operator $T$ is quasi-rigid.
    \item The operator $T$ is a linear and continuous factor of a hyper-recurrent operator $S$ acting on a separable Fréchet or Banach space such that $\mathrm{Hr}(S)$ is dense in $Y$.
\end{enumerate}
\end{theorem}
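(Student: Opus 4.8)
The plan is to prove the equivalence by treating the two implications separately: $(2)\Rightarrow(1)$ is essentially formal, while for $(1)\Rightarrow(2)$ I would exhibit a concrete hyper-recurrent extension built on a countable product of copies of $X$.

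For $(2)\Rightarrow(1)$ I would argue as follows. Suppose $\pi\colon\widehat X\to X$ is a continuous linear surjection with $\pi S=T\pi$, where $S$ is hyper-recurrent on a separable Fréchet space $\widehat X$. Since a hyper-recurrent operator is quasi-rigid (observed just after Definition \ref{def:hyper-recurrent}), pick $\omega\in\mathfrak C$ with $\overline{\mathfrak L_S(\omega)}=\widehat X$. From $\pi S=T\pi$ and continuity of $\pi$ one gets $\pi(\mathfrak L_S(\omega))\subset\mathfrak L_T(\omega)$, and a continuous surjection sends dense sets to dense sets (the preimage of a nonempty open set is nonempty and open, hence meets $\mathfrak L_S(\omega)$). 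Thus $\mathfrak L_T(\omega)$ is dense in $X$; assuming $X\neq\{0\}$ this also gives $\omega\in\mathfrak C$ for $T$, so $T$ is quasi-rigid. Note that density of $\mathrm{Hr}(S)$ plays no role here.

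The substantial direction is $(1)\Rightarrow(2)$. Assuming $T$ quasi-rigid, I would fix $\theta\in\mathfrak C$ with $\overline{\mathfrak L_T(\theta)}=X$, set $L_0:=\mathfrak L_T(\theta)$ (a dense subspace of the separable space $X$), and fix a countable set $D=\{d_j:j\in\mathbb N\}\subset L_0$ dense in $X$. Then I would take $\widehat X:=X^{\mathbb N}$ with the product topology — a separable Fréchet space — and let $S\colon\widehat X\to\widehat X$, $S((y_j)_j):=(Ty_j)_j$, which is linear and continuous, with the first coordinate projection $\pi_1\colon\widehat X\to X$ satisfying $\pi_1 S=T\pi_1$, so that $T$ is a linear continuous factor of $S$. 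The crucial elementary point is that, in the product topology, $S^{\omega_n}(y_j)_j\to(y_j)_j$ iff $T^{\omega_n}y_j\to y_j$ for every $j$; hence $\mathfrak L_S(\omega)=\mathfrak L_T(\omega)^{\mathbb N}$ and $\overline{\mathfrak L_S(\omega)}=\overline{\mathfrak L_T(\omega)}^{\,\mathbb N}$ for every $\omega$. I would then check that any $z\in\widehat X$ whose coordinates all lie in $L_0$ and whose set of coordinates is dense in $X$ — for instance $z_0:=(d_j)_j$, which is nonzero since $X\neq\{0\}$ — is hyper-recurrent for $S$: indeed $S^{\theta_n}z\to z$ gives $z\in\mathrm{Rec}(S)$, and if $z\in\mathfrak L_S(\omega)$ then each coordinate of $z$ lies in $\mathfrak L_T(\omega)$, so $\overline{\mathfrak L_T(\omega)}=X$, hence $\overline{\mathfrak L_S(\omega)}=\widehat X$. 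This makes $S$ hyper-recurrent. Finally, since a basic open subset of $\widehat X$ constrains only finitely many coordinates (say those of index $\le N$), one can put the constrained coordinates in the dense set $L_0$ and fill the remaining ones with the members of $D$, producing a vector of the above kind inside the given open set; so $\mathrm{Hr}(S)$ is dense in $\widehat X$.

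The conceptual heart — spreading the single good sequence $\theta$ from quasi-rigidity over infinitely many coordinates, where recurrence becomes coordinatewise and closure commutes with the product — is straightforward once the product-topology bookkeeping is in place, so I do not expect a real obstacle. The one point requiring genuine care arises only if one insists that $S$ act on a \emph{Banach} space whenever $X$ is Banach, rather than on the Fréchet space $X^{\mathbb N}$: then I would replace $\widehat X$ by a weighted sum $\ell^1((w_j),X)$ with weights $w_j>0$ chosen so small that $\sum_j w_j\sup_n\|T^{\theta_n}d_j-d_j\|<\infty$ (each supremum being finite since $T^{\theta_n}d_j$ converges), which restores norm convergence $S^{\theta_n}z_0\to z_0$; the density of $\mathrm{Hr}(S)$ then needs an extra rearrangement step matching coordinates of large $T^{\theta_n}$-oscillation with small-weight indices, using $w_j\to0$. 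That rearrangement is the fiddliest part of the argument, but it is routine bookkeeping rather than a true difficulty.
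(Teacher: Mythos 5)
Your proposal is correct and follows essentially the same route as the paper: both realize $T$ as the first-coordinate factor of the diagonal operator $S=\bigoplus T$ on a countable power of $X$, and both obtain hyper-recurrent vectors for $S$ by listing a dense subset of $\mathfrak L_T(\theta)$ among the coordinates, so that any return sequence for such a vector forces every listed point back and hence makes $\mathfrak L_T(\omega)$ dense. The only difference is that the paper works from the start in the $\ell^1$-sum $\ell_1(X)$ (Banach when $X$ is) and handles summability by scaling the dense sequence by small factors $r_k$, whereas you use the product topology on $X^{\mathbb N}$ — which removes that bookkeeping but forces the weighted-$\ell^1$ patch you sketch for the Banach case; that patch is sound (placing each $d_j$ at a sufficiently small-weight index, or scaling as the paper does, both work).
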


\begin{proof}
We will give the proof within the framework of a separable Fréchet space. We start with an explicit construction of a hyper-recurrent operator which has a given quasi-rigid operator $T$ as a factor. Choose $\omega \in \mathfrak{C}$ such that $\mathfrak{L}_T(\omega)$ is dense in $X$. Due to the separability of $X$, there exists a countable set $\{q_{k}\}_{k\in \mathbb{N}} \subset \mathfrak{L}(\omega)$ which is dense in  $X$.
We now consider the  separable Fréchet space
\[\ell_{1}(X)={\left\{(x_{m})_{m\in \mathbb{N}}: x_{m}\in X\; \forall m\in \mathbb{N},  \; \textstyle\sum_{m\in \mathbb{N}}d(x_{m},0)<\infty \right\}},\]
equipped with the metric $d_1((x_m)_m,(y_m)_m) = \sum_m d(x_m,y_m)$,
and the operator $S$ defined through $S((x_{m})_{m})=(Tx_{m})_{m}$. As $T$ is quasi-rigid, $S$ is a recurrent operator on $\ell_{1}(X)$.
Furthermore, the projection $\Theta: \ell_{1}(X)\rightarrow X$ onto the first coordinate is continuous and satisfies $\Theta\circ S=T\circ \Theta$.

We proceed with the proof that $\mathrm{Hr}(S)$ is dense in $\ell_{1}(X)$. To achieve this, consider an arbitrary positive integer $\ell > 1$ and $\epsilon > 0$. For a vector $\overline{x} = (x_{1}, \ldots, x_{\ell}, 0, \ldots, 0, \ldots)$ in $\ell_{1}(X)$, there are $q_{k_{1}}, \ldots,q_{k_{\ell}}$ such that
$\sum_{i=1}^{\ell}d(x_{i}, q_{k_{i}})<\epsilon/3$. It is evident that for each $k$, the set $\{T^{\omega_{n}}q_{k}\}_{n}\cup \{q_{k}\}$ is bounded in $X$. Hence, there exist $r_{k}\in (0,1)$ such that $\{r_{k}T^{\omega_{n}}q_{k}\}_{n}\cup \{r_{k}q_{k}\}\subset B(0, \epsilon/2^{k+1})$. Set
\[
z:=(q_{k_{1}}, \ldots, q_{k_{\ell}}, r_{1}q_{1}, r_{2}q_{2}, r_{3}q_{3}, \ldots, r_{k}q_{k}, \ldots)\in \ell_{1}(X)
\]
One can see that $d(\overline{x}, z)<\epsilon$, and $z\in \mathfrak{L}_{S}(\omega)$. We claim that $z$ is a hyper-recurrent vector for $S$. If $z\in \mathfrak{L}_{S}(\theta)$ for some $\theta:=(\theta_{n})_{n}$, then
\begin{eqnarray*}
	d_1(S^{\theta_{n}}z, z)\geq \sum_{k=1}^{\infty} d(r_{k}(T^{\theta_{n}}q_{k}-q_{k}),0)\xrightarrow[n\to \infty]{} 0
\end{eqnarray*}
As consequence, $\{q_{k}\}_{k}\subset \mathfrak{L}_{T}(\theta)$. Therefore, $\overline{\mathfrak{L}_{S}(\theta)}=X$.
To prove the remaining direction, it suffices to note that a factor of a quasi-rigid operator is always quasi-rigid.
\end{proof}
In order to study the gap between quasi-rigidity and hyper-recurrence, we introduce the notation
\begin{center}
    $\eta(T):=\min \{\vert{A\vert}: A\subset \mathrm{Rec}(T)\; \text{is a countable set, such that if}\; A\subset \mathfrak{L}(\omega)\;\text{for some}\; \omega,\; \text{then}\; \overline{\mathfrak{L}(\omega)}=X\}$.
\end{center}
Observe that the quantity $\eta(T)$ can be perceived as the minimal number of vectors which are necessary to capture the essence of quasi-rigidity. For example, an operator $T$ is hyper-recurrent if and only if $\eta(T) = 1$. Moreover, assuming $n > 1$, it follows that $\eta(T) = n$ if and only if $T_{n}$ is hyper-recurrent while $T_{n-1}$ is not.

\begin{proposition} \label{prop:eta}
Let $X$ be a separable Banach space and $T\in \mathcal{L}(X)$ be a quasi-rigid operator. Then the following holds.
\begin{enumerate}
\item $\eta(T)=2$ if and only if, $T_2$ is hyper-recurrent and for every $(p,q)\in \mathrm{Hr}(T_{2})$ and every $r\in(0, \infty)$, there exists a strictly increasing sequence of positive integers $(a_{\ell})_{\ell\in \mathbb{N}}$ such that
\begin{eqnarray*}
    \lim_{\ell\rightarrow \infty}\frac{\Vert{T^{a_{\ell}}p-p\Vert}}{\Vert{T^{a_{\ell}}q-q\Vert}}=r.
\end{eqnarray*}
\item $\eta(T)=\infty$ if and only if, for every $n$ and for every $(q_{1}, \cdots, q_{n})\in \mathrm{Rec}(T_{n})$, there exists a sequence $\omega\in \mathfrak{C}$ such that $\mathfrak{L}(\omega)$ is not dense in $X$ and $\{q_{i}: i=1,\ldots, n\}\subset \mathfrak{L}(\omega)$.
\end{enumerate}
\end{proposition}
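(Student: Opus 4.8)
The plan is to reduce both statements to the two facts recorded just before the proposition --- that $\eta(T)=1$ exactly when $T$ is hyper-recurrent, and that for $n>1$ one has $\eta(T)=n$ exactly when $T_n$ is hyper-recurrent while $T_{n-1}$ is not --- together with the elementary identity $\mathfrak{L}_{T_m}(\omega)=\mathfrak{L}_T(\omega)^m$, from which $\overline{\mathfrak{L}_{T_m}(\omega)}=\overline{\mathfrak{L}_T(\omega)}^{\,m}$; in particular $\overline{\mathfrak{L}_{T_m}(\omega)}=X^m$ iff $\overline{\mathfrak{L}_T(\omega)}=X$, and $(q_1,\dots,q_m)\in\mathrm{Hr}(T_m)$ iff every $\omega\in\mathfrak{C}$ with $q_1,\dots,q_m\in\mathfrak{L}_T(\omega)$ satisfies $\overline{\mathfrak{L}_T(\omega)}=X$. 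Since $T$ is quasi-rigid, a countable dense subset of $\mathfrak{L}_T(\theta)$ for some $\theta$ with $\overline{\mathfrak{L}_T(\theta)}=X$ is a capturing set, so $\eta(T)\le\aleph_0$; hence $\eta(T)=\infty$ means precisely that no $T_n$ is hyper-recurrent, i.e. $\mathrm{Hr}(T_n)=\emptyset$ for every $n$.

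Part (2) is then a translation. By the identities above, $\mathrm{Hr}(T_n)=\emptyset$ says exactly that for every $(q_1,\dots,q_n)\in\mathrm{Rec}(T_n)$ there is $\omega\in\mathfrak{C}$ with $q_1,\dots,q_n\in\mathfrak{L}_T(\omega)$ and $\overline{\mathfrak{L}_T(\omega)}\ne X$, i.e. $\mathfrak{L}_T(\omega)$ not dense; here $\omega\in\mathfrak{C}$ because some $q_i\ne 0$ lies in $\mathfrak{L}_T(\omega)$, and for the trivial tuple $(0,\dots,0)$ I would instead take any nonzero recurrent vector (necessarily not hyper-recurrent) to produce such an $\omega$. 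Quantifying over all $n$ gives exactly the asserted condition, and the converse is the same equivalence read backwards.

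For part (1), the recorded facts give that $\eta(T)=2$ is equivalent to ``$T_2$ hyper-recurrent and $T$ not hyper-recurrent'', so it suffices to show that, assuming $T_2$ is hyper-recurrent, $T$ fails to be hyper-recurrent if and only if the ratio condition holds (when $T_2$ is not hyper-recurrent, both sides of the proposition are false). Write $\alpha_\ell:=\|T^\ell p-p\|$ and $\beta_\ell:=\|T^\ell q-q\|$. For the easy direction, suppose $T$ is hyper-recurrent and fix $p\in\mathrm{Hr}(T)$; then $(p,0)$ is recurrent for $T_2$ and, by the identities, $(p,0)\in\mathrm{Hr}(T_2)$, yet for $(p',q')=(p,0)$ the denominator $\|T^{a_\ell}\cdot 0-0\|$ vanishes identically, so no sequence can make the corresponding ratio tend to a number in $(0,\infty)$; hence the ratio condition fails, which is the contrapositive wanted.

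The main work, and the step I expect to be the obstacle, is the remaining implication: realizing an \emph{arbitrary} $r\in(0,\infty)$ as a limit of ratios of return norms. The trick is not to combine separate ``bad'' sequences for $p$ and $q$ by hand, but to use a single bad sequence for the vector $v:=p-rq$. Assume $T_2$ is hyper-recurrent but $T$ is not, and fix $(p,q)\in\mathrm{Hr}(T_2)$ and $r\in(0,\infty)$; note $p$ and $q$ are linearly independent (otherwise $(p,q)\in\mathrm{Hr}(T_2)$ would force a coordinate into $\mathrm{Hr}(T)=\emptyset$, using Proposition~\ref{esp}), so $v\ne 0$, and since $T^{\nu_n}p\to p$ and $T^{\nu_n}q\to q$ along a sequence $\nu$ realizing recurrence of $(p,q)$, also $T^{\nu_n}v\to v$, so $v\in\mathrm{Rec}(T)$. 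As $\mathrm{Hr}(T)=\emptyset$, choose $\omega\in\mathfrak{C}$ with $v\in\mathfrak{L}_T(\omega)$ and $\overline{\mathfrak{L}_T(\omega)}\ne X$; then $\|(T^{\omega_n}p-p)-r(T^{\omega_n}q-q)\|\to 0$, so $\alpha_{\omega_n}-r\beta_{\omega_n}\to 0$ by the reverse triangle inequality (using $r>0$). Now $\beta_{\omega_n}\not\to 0$, for otherwise $\alpha_{\omega_n}\to 0$ as well, giving $(p,q)\in\mathfrak{L}_{T_2}(\omega)$ and hence, since $(p,q)\in\mathrm{Hr}(T_2)$, $\overline{\mathfrak{L}_T(\omega)}=X$ --- a contradiction. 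Thus $\beta_{\omega_n}\ge\epsilon$ for some $\epsilon>0$ along a strictly increasing subsequence $(a_\ell)$ of $(\omega_n)$, and then $\alpha_{a_\ell}/\beta_{a_\ell}=r+(\alpha_{a_\ell}-r\beta_{a_\ell})/\beta_{a_\ell}\to r$, with the ratio well defined since the denominators are bounded below. This is the sequence demanded by the ratio condition; combining with the easy direction completes part (1).
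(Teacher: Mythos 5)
Your proof is correct and follows essentially the same route as the paper: part (2) is the same translation of the definition of $\eta$, and the crux of part (1) is the same auxiliary vector $p\pm rq$ together with the reverse triangle inequality bound $\bigl|\,\Vert T^{n}p-p\Vert - r\Vert T^{n}q-q\Vert\,\bigr|\leq \Vert T^{n}(p\pm rq)-(p\pm rq)\Vert$. The only difference is presentational — the paper argues by contradiction (if $r\notin A(p,q)$ then $p+rq\in \mathrm{Hr}(T)$, forcing $\eta(T)=1$) whereas you extract the sequence directly from a non-dense $\mathfrak{L}(\omega)$ containing $p-rq$, and you are somewhat more explicit about the edge cases (linear independence of $p,q$, the zero tuple, non-vanishing denominators).
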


\begin{proof} We begin with the proof of the first statement.
If $\eta(T) = 2$, then $T_2$ is always hyperrecurrent. So assume that $(p, q) \in \mathrm{Hr}(T_{2})$. Then $p$ and $q$ are not periodic. Furthermore, suppose that there exists  $r > 0$ such that $r \notin A(p, q)$, with
\begin{eqnarray*}
    A(p,q):= \left\{t\in(0,\infty): \exists\; (a_{\ell})_{\ell}\; \hbox{such that}\; \lim_{\ell\rightarrow \infty}\frac{\Vert{T^{a_{\ell}}p-p\Vert}}{\Vert{T^{a_{\ell}}q-q\Vert}}=t \right\}.
\end{eqnarray*}
It is then clear that $p+rq$ is a recurrent vector for $T$. Moreover, if $p + rq$ lies in $\mathfrak{L}(\omega)$ for some $\omega \in \mathfrak{C}$, then
\begin{eqnarray*}
 \Vert{T^{\omega_{n}}q-q\Vert}\left\vert{r-\frac{\Vert{T^{\omega_n}p-p\Vert}}{\Vert{T^{\omega_n}q-q\Vert}}}\right\vert \leq \Vert{T^{\omega_{n}}(p+rq)-(p+rq)\Vert}  \xrightarrow[n\rightarrow \infty]{}0.
\end{eqnarray*}
Hence, $\lim_n \Vert{T^{\omega_{n}}q-q\Vert} =0$ as the second term does not converge to $0$ by assumption. Therefore, $\lim_n \Vert{T^{\omega_{n}}p-p\Vert} =0$ which then implies that  $(p,q)$ belongs to $\mathfrak{L}_{T_{2}}(\omega)$ and that  $\mathfrak{L}_{T_{2}}(\omega)$ is dense in $X^2$. So we obtain that  $p + rq \in \mathrm{Hr}(T)$ and $\eta(T)=1$, which is a contradiction.
To prove the converse, it remains to show that $T$ is not hyper-recurrent, or equivalently, that
$\mathrm{Hr}(T_{2})\cap \{(x,x): x\in X\}=\emptyset$. However, this follows from the condition on the quotients.

The second, remaining statement is an immediate consequence of the definition.
\end{proof}

Recall that Proposition \ref{RnH} implies that there exist rigid, non-hyper-recurrent operators on non-separable Hilbert spaces. In order to answer the question whether there are quasi-rigid operators which are not hyper-recurrent defined on a separable  Hilbert space, we adapt a construction in \cite{Sophie} in order to show the following.

\begin{proposition} \label{prop:quasi-rigid and not hyper-recurrent}
Let $X$ be a separable and infinite dimensional complex Hilbert space. Then there exists a rigid operator $T:X \to X$ with $\eta(T)=\infty$ and $\mathrm{St}(T) = \emptyset$. In particular, $T$ is not hyper-recurrent.
\end{proposition}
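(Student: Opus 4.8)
The plan is to reduce everything to the construction of a single \emph{rigid} operator $T$ on $X$ with $\eta(T)=\infty$, and then to build such a $T$ by adapting the construction of Grivaux, López-Martínez and Peris underlying \eqref{eq:auge-tapia-operator}. For the reduction, suppose $T$ is rigid, say $\mathfrak{L}(\psi)=X$; then $\psi$ is a recurrence sequence for \emph{every} vector, so if some recurrent $q$ also admits a recurrence sequence $\omega$ with $\overline{\mathfrak{L}(\omega)}\neq X$, then no closed $T$-invariant subspace equals $\overline{\mathfrak{L}(\cdot)}$ along all recurrence sequences of $q$, whence $q\notin\mathrm{St}(T)$ by Proposition \ref{estacionario}. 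Consequently, once we show that for \emph{every} $n$ and every tuple $(q_1,\dots,q_n)\in\mathrm{Rec}(T_n)$ there is $\omega\in\mathfrak{C}$ with $q_1,\dots,q_n\in\mathfrak{L}(\omega)$ and $\overline{\mathfrak{L}(\omega)}\neq X$, we obtain at once that $\eta(T)=\infty$ (no $T_n$ is hyper-recurrent, so in particular $T$ is not hyper-recurrent) and, applying the previous observation with $n=1$, that $\mathrm{St}(T)=\emptyset$.

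Two constraints pin down the \emph{type} of operator that is needed. First, $T$ cannot be normal: a rigid normal operator $T\cong M_z$ on $L^2(\mu)$, with $\mu$ a finite Borel measure, is automatically hyper-recurrent, since any $f$ that is nonzero $\mu$-a.e.\ is recurrent, and $f\in\mathfrak{L}(\omega)$ forces $z^{\omega_n}\to1$ in $\mu$-measure, hence $M_z^{\omega_n}\to I$ strongly and $\overline{\mathfrak{L}(\omega)}=X$. So $T$ must carry a genuine Jordan-type component, which is precisely what the feedback term of \eqref{eq:auge-tapia-operator} supplies. Second, by the mechanism behind Theorem \ref{Hr-sta}, an Augé--Tapia-type operator whose feedback is routed through a \emph{finite}-dimensional subspace $V$ has $\eta(T)\le\dim V$ (indeed, a generic tuple $q_1,\dots,q_d$ with $\mathbb{P}q_1,\dots,\mathbb{P}q_d$ spanning $V$ forces $\overline{\mathfrak{L}(\omega)}\supseteq V\oplus\overline{\mathrm{span}}\{f_k\}=X$); hence the feedback must pass through an infinite-dimensional subspace, so that no finite tuple of recurrent vectors can ever constrain all the relevant input directions.

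Accordingly, I would build $T$ on $X=V\oplus W$ with $V,W$ infinite-dimensional, of the shape $Tx=Sx+\sum_{k\ge1}\tfrac{\lambda_k}{m_{k-1}}g_k(\mathbb{P}x)\,f_k$ as in \eqref{eq:auge-tapia-operator}, where $\mathbb{P}$ is the projection onto $V$, $S$ acts trivially enough on $V$ and as the diagonal unitary with eigenvalues $\mu_k=e^{i\pi/m_k}$ on $W=\overline{\mathrm{span}}\{f_k\}$, the integers $m_k$ satisfy $m_{k-1}\mid m_k$, the family $\{g_k\}$ is dense in the unit sphere of $V^\ast$, and the weights $\lambda_k$ — together with the growth of $m_k/m_{k-1}$ — are tuned, which is the delicate part of the adaptation of \cite{Sophie}, so that $T$ is bounded and, along $\psi_\ell:=2m_{\theta_\ell-1}$ for a suitable $\theta_\ell\uparrow\infty$, one has $T^{\psi_\ell}\to I$ in the strong operator topology; Banach--Steinhaus then gives $\mathfrak{L}(\psi)=X$, i.e.\ $T$ is rigid (so $\mathrm{Rec}(T)=X$, with no wild part). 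The induction of \cite{Auge,Tapia} yields a clean formula for $T^Nx-x$ in terms of the $\mu_k$ and the scalars $g_k(\mathbb{P}x)$, which turns membership $x\in\mathfrak{L}(\omega)$ into a statement about whether certain weighted sums $\sum_k|\mu_k^{\omega_\ell}-1|^2|c_k(x)|^2$ tend to $0$, with $c_k(x)$ depending only on $\mathbb{P}x$ and on the $W$-coordinates of $x$.

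The heart of the matter — and the step I expect to be the main obstacle — is then the construction of $\omega$: given $q_1,\dots,q_n$, one must produce a sequence that returns the finitely many data vectors $(c_k(q_i))_k$ yet is \emph{not} a rigidity sequence, i.e.\ keeps $\mu_{k_0}^{\omega_\ell}$ bounded away from $1$ for at least one index $k_0$, so that the corresponding direction stays outside $\overline{\mathfrak{L}(\omega)}$ and hence $\overline{\mathfrak{L}(\omega)}\neq X$. When the data vectors have finite support this is immediate (take $\omega_\ell$ a multiple of $2m_{\max\,\mathrm{supp}}$ but not of $2m_{\max\,\mathrm{supp}+1}$); the real work is to handle data vectors of full support. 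Here one exploits simultaneously the decay built into the formula for $c_k(x)$, the density of $\{g_k\}$ in the unit sphere of $V^\ast$ (which provides, for the finite-dimensional $V_0=\mathrm{span}\{\mathbb{P}q_1,\dots,\mathbb{P}q_n\}$, an abundance of indices $k$ where $g_k$ nearly annihilates $V_0$), and the arithmetic of the roots of unity $\mu_k$, to assemble $\omega$ along a carefully chosen block structure of indices. This is exactly the point where $(m_k)$, $(\lambda_k)$ and the enumeration $(g_k)$ must be balanced against one another: the feedback must be strong enough that returning $q_1,\dots,q_n$ genuinely constrains only the finitely many input directions in $V_0$ — leaving infinitely many directions free to exclude from $\overline{\mathfrak{L}(\omega)}$ — and weak enough that $T$ stays bounded and rigid. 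Once $\omega$ is obtained, $q_1,\dots,q_n\in\mathfrak{L}(\omega)$ while $\overline{\mathfrak{L}(\omega)}$ is contained in a proper closed subspace of $X$, and the proof is complete.
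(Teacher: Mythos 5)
Your overall architecture is the right one and matches the paper's: reduce to showing that a single rigid $T$ admits, for every tuple $(q_1,\dots,q_n)\in\mathrm{Rec}(T_n)$, a sequence $\omega$ with $q_i\in\mathfrak{L}(\omega)$ and $\overline{\mathfrak{L}(\omega)}\neq X$ (which gives $\eta(T)=\infty$ via Proposition \ref{prop:eta} and, combined with rigidity and Proposition \ref{estacionario}, gives $\mathrm{St}(T)=\emptyset$); and realize $T$ as an Augé--Tapia-type operator whose feedback is routed through an \emph{infinite}-dimensional subspace $V$ using a family of functionals dense in the unit sphere of $V^{\ast}$. This is essentially the construction in the paper (there $V=E=\overline{\mathrm{span}}\{e_k:k<0\}$, on which $T$ acts as the identity, and the functionals are $\langle\omega_k,\mathbb{P}(\cdot)\rangle$ with $\{\omega_k\}$ dense in the sphere of $E$). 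The reduction paragraph is correct as stated.

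However, there is a genuine gap at exactly the step you flag as ``the heart of the matter,'' and the mechanism you propose there cannot work. You want $\overline{\mathfrak{L}(\omega)}\neq X$ by keeping $\mu_{k_0}^{\omega_\ell}$ bounded away from $1$ for some index $k_0$, i.e.\ by excluding an eigendirection $f_{k_0}$ of the $W$-part. But if a recurrent vector $q$ has $c_{k_0}(q)\neq 0$, then $\|T^{\omega_\ell}q-q\|^2\gtrsim|\mu_{k_0}^{\omega_\ell}-1|^2|c_{k_0}(q)|^2$, so $q\in\mathfrak{L}(\omega)$ \emph{forces} $\mu_{k_0}^{\omega_\ell}\to1$; for data of full support (the case you admit is ``the real work'') every recurrence sequence of $q$ is therefore forced to satisfy $\mu_k^{\omega_\ell}\to1$ for all $k$, so all of $W$ lands inside $\overline{\mathfrak{L}(\omega)}$ and nothing can be excluded on that side. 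The obstruction to density must come from the $V$-side, and this is what the paper does: taking $\omega\in V$ a unit vector orthogonal to $\mathrm{span}\{\mathbb{P}q_1,\dots,\mathbb{P}q_n\}$ and a subsequence $(k_\ell)$ with $\omega_{k_\ell+1}\to\omega$, one gets $\lim_\ell\|T^{m_{k_\ell}}(x)-x\|=|\langle\omega,\mathbb{P}x\rangle|$ for \emph{every} $x$ (the eigenvalue terms and the tail of the feedback vanish automatically along $(m_\ell)$ by the divisibility $m_k\mid m_{k+1}$ and the summability condition on $(m_k)$), so $\mathfrak{L}((m_{k_\ell})_\ell)=\ker(\omega^{\ast}\circ\mathbb{P})$ is a proper closed hyperplane containing the $q_i$. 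No block structure or balancing of $(m_k)$ against $(g_k)$ is needed; the same computation with $\omega_{k_\ell+1}\to e_\ell$ (Riemann--Lebesgue) gives rigidity. As written, your proposal leaves the decisive construction unexecuted and points it in a direction that provably fails for the hard case.
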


\begin{proof} Fix an orthonormal basis $\{ e_k : k \in \mathbb{Z}\}$ of $X$, set $E:= \mathrm{span}(\{e_k : k< 0\})$, $S:= \{ x \in E : \|x\| =1\}$ and let $\mathbb{P}: X \to E$ refer to the canonical projection. In order to define the operator, we now make use of a sequence $(\omega_k : k \geq 0)$ in $S$ and an increasing sequence $(m_k:k \geq 0)$ in $\mathbb{N}$ such that the following holds.
\begin{enumerate}
 \item $\{\omega_k\}$ is dense in $S$,
 \item $m_k |m_{k+1}$ for all $k \geq 0$ and $\lim_{j\to \infty} m_j \sum_{k> j} m_k^{-1} =0$.
\end{enumerate}
Let $\lambda_k := e^{\frac{2\pi i}{m_k}}$. The operator $T$ is now defined by, for $x = \sum_{k \in \mathbb{Z}} x_k e_k$,
\[
Tx = \sum_{k < 0} x_k e_k + \sum_{k \geq 0} \lambda_k x_k e_k + \sum_{k \geq 0} \frac{1}{m_{k-1}} \langle \omega_k, \mathbb{P}(x) \rangle e_k.
\]
It then follows in analogy to the Augé-Tapia operators that (see Fact  3.3.1 in \cite{Sophie} for our setting) that
\[
T^n(x) - x  = \sum_{k\geq 0} (\lambda_k^n -1)x_k e_k  + \sum_{k\geq 0} \frac{\lambda_k^n -1}{(\lambda_k -1)m_{k-1}} \langle \omega_k, \mathbb{P}(x) \rangle e_k.
\]
For $n = m_\ell$, the property that $m_{k+1}$ is a multiple of $m_{k}$ implies that
\begin{align*}
	T^{m_\ell}(x) - x =  & \sum_{k> \ell} (\lambda_k^{m_\ell} -1)x_k e_k  +  \frac{\lambda_{\ell + 1}^{m_\ell} -1}{(\lambda_{\ell + 1} -1)m_{\ell}} \langle \omega_{\ell+1}, \mathbb{P}(x) \rangle e_{\ell+1}
 \\ & +  \sum_{k> \ell + 1} \frac{\lambda_k^{m_\ell} -1}{(\lambda_k -1)m_{k-1}} \langle \omega_k, \mathbb{P}(x) \rangle e_k = I_1 + I_2 + I_3.
\end{align*}
It is now not hard to show that $I_1$ and $I_3$ tend to $0$ as $\ell \to \infty$ (cf. Fact 3.3.3 and the proof of Proposition 3.4 in \cite{Sophie}). Furthermore,
\[ \lim_{\ell \to \infty} \frac{\lambda_{\ell + 1}^{m_\ell} -1}{(\lambda_{\ell + 1} -1)m_{\ell}} =1. \]
Hence, as in the case of the Augé-Tapia operators, the recurrence of $x$ along $(m_\ell)$ is given by the asymptotics of $\langle \omega_k, \mathbb{P}(x) \rangle$.

 We now use this observation in order to show that $T$ is rigid. So choose a sequence $(k_\ell)_\ell$ such that ${\lim_{\ell \to \infty} \|\omega_{k_{\ell}+1}-e_{\ell}}\| =0$. Then, for each $x \in X$,
\begin{align*}
|\langle \omega_{k_{\ell}+1}, \mathbb{P}(x) \rangle |
	& \leq   | \langle e_\ell, \mathbb{P}(x) \rangle| +   |\langle \omega_{k_{\ell}+1} - e_\ell , \mathbb{P}(x) \rangle | \\
	& \stackrel{(\star)}{\leq}  | \langle e_\ell, \mathbb{P}(x) \rangle | +  \| \omega_{k_{\ell}+1} - e_\ell  \|  \| \mathbb{P}(x) \|\xrightarrow[\ell\rightarrow \infty]{\ast} 0,
\end{align*}
where we have used the Cauchy-Schwarz inequality in $(\star)$ and the Riemann-Lebesgue Lemma in $(\ast)$. Hence, $\lim_\ell T^{m_{k_l}}(x)=x$. As $x$ can be arbitrarily chosen, $T$ is rigid.

In order to show that $\eta(T) = \infty$, we now construct a further sequence $(k_\ell)_\ell$ for a given $n$-tupel $q_1, \ldots q_n \in X$ as follows. Choose  $\omega \in S$ in the orthogonal complement of  $\mathrm{span}(\{\mathbb{P}q_i : i =1, \ldots n\})$. By density, there exists a sequence $(k_\ell)$ such that {$(\omega_{k_{\ell}+1})$ converges to $\omega$}. For $x \in X$, this implies that
\[ \lim_{\ell \to \infty} \Vert{T^{m_{k_\ell}}(x) - x\Vert} = \lim_{\ell \to \infty} \vert\langle\omega_{k_{\ell}+1},\mathbb{P}x \rangle\vert =   \vert\langle\omega,\mathbb{P}x \rangle\vert.
\]
Hence,  $\mathfrak{L}((m_{k_\ell})_\ell)$ is the kernel of the linear form $\omega^\ast\circ \mathbb{P}$ and $\{q_1,\ldots, q_n\} \in \mathfrak{L}((m_{k_\ell})_\ell)$.
In particular, $\mathfrak{L}((m_{n_\ell})_\ell)$  is not dense and therefore, $\eta(T) = \infty$ by Proposition \ref{prop:eta}.
So it remains to show that $\mathrm{St}(T) = \emptyset$. Suppose the contrary, that is there exists $p \in \mathrm{St}(T)$. Since $T$ is rigid, $Y_{p}=X$, which is a contradiction as $T$ is not hyper-recurrent.
\end{proof}

We would like to point out that the construction in the proof is clearly inspired by the one in \cite{Sophie}. The only difference is that we are considering a Hilbert space instead of a Banach space and that in our case, $E$ has infinite dimension. We would also like to remark that we are not aware of examples of operators with $1< \eta(T) < \infty$.

\section{Exploring hyper-recurrence} \label{sec:exploring}

In this section, we analyse the existence and properties of hyper-recurrent vectors for specific classes of operators. That is, we study power bounded operators in \ref{subsec:pbo}. multiplication operatores in \ref{subsec:mo},  operators with discrete spectrum  \ref{subsec:ods} and composition operators acting on holomorphic functions in
\ref{subsec:co}.

\subsection{Power bounded operators}  \label{subsec:pbo}

Recall from Definition \ref{def:power-bounded} that a linear and continuous operator on a Fréchet space is power-bounded if, for all $x \in X$, the orbit $\mathcal{O}_x:= \{T^n(x): n \in \mathbb{N}\}$ is bounded in $X$. In other words, for any neighborhood $U$ of $0$, there exists a positive number $r$ such that for all $z \in \mathbb{C}$ with $|z|\geq r$, $\mathcal{O}_x \subset zU$. Furthermore, if $T$ is a bounded operator on a Banach space, then $T$ is power bounded if $\sup_n \Vert{T^{n}\Vert}< \infty$.

In the context of power bounded operators on Fréchet spaces, there are the following two important tools. Recall that the Banach-Steinhaus theorem guarantees the equicontinuity of the family $\{T^{n}\}$ whenever $T$ is power bounded \cite[Theorem 2.6]{Rudin:1991}. Moreover, if $d$ is a translation-invariant metric on $X$ and $T$ is power-bounded, then
\begin{eqnarray*}
d_{T}^{*}(x,y):=\sup_{\ell\geq 0}d(T^{\ell}x,T^{\ell}y)
\end{eqnarray*}
defines a translation-invariant metric on $X$ with $d_{T}^{*}(Tx,Ty) \leq d_{T}^{*}(x,y)$ for all $x,y \in X$, which is equivalent to $d$ by the Banach-Steinhaus theorem.

If one adds recurrence in the setting of Banach spaces, it follows from Proposition 3.2 \cite{Cos}) that  $T^{-1}$ is also power bounded and recurrent. This result for Banach spaces motivates us to obtain the same behavior for Fréchet spaces.

\begin{proposition} \label{prop:inverse-of-power-bounded}
Let $T$ be a power-bounded operator on a Fréchet space $X$.
\begin{enumerate}
 \item \label{item:powerbounded-1} If $T$ is recurrent, then $T$ is invertible and $T^{-1}$ is power-bounded and recurrent. Moreover, $T$ is an isometry with respect to $d_{T}^{*}$.
 \item \label{item:powerbounded-2} If $T$ is hyper-recurrent, then $\mathrm{Hr}(T)=\mathrm{Hr}(T^{-1})$ and  $T^{-1}$ is hyper-recurrent.
\end{enumerate}
\end{proposition}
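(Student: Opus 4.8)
The plan is to prove both parts together, building on the fact established in Proposition~\ref{prop:inverse-of-power-bounded}~\ref{item:powerbounded-1} that a recurrent power-bounded operator $T$ on a Fréchet space is invertible, that $T^{-1}$ is again power-bounded and recurrent, and that $T$ is an isometry for the metric $d_T^\ast$. The key observation I would isolate first is a symmetry between the limit sets of $T$ and of $T^{-1}$: for any $\omega \in \mathfrak{C}_T$, I claim $\mathfrak{L}_T(\omega) = \mathfrak{L}_{T^{-1}}(\omega)$, where we regard $\omega$ as a sequence for $T^{-1}$ as well. Indeed, if $x \in \mathfrak{L}_T(\omega)$, then $d_T^\ast(T^{\omega_n}x, x) \to 0$ because $T$ is a $d_T^\ast$-isometry and $d(T^{\omega_n}x,x)\to 0$ (here one uses that $d_T^\ast$ is equivalent to $d$, together with $d_T^\ast(T^\ell y, T^\ell z)=d_T^\ast(y,z)$ to propagate the convergence to the full orbit). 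Applying the isometry $T^{-\omega_n}$ gives $d_T^\ast(x, T^{-\omega_n}x) \to 0$, hence $d(T^{-\omega_n}x,x)\to 0$, i.e. $x \in \mathfrak{L}_{T^{-1}}(\omega)$. The reverse inclusion is symmetric since $T^{-1}$ is also power-bounded and recurrent, so $(T^{-1})^{-1}=T$ and the same argument applies.

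Granting this identity of limit sets, part \ref{item:powerbounded-2} is essentially immediate. Suppose $T$ is hyper-recurrent and let $x \in \mathrm{Hr}(T)$. First, $x \in \mathrm{Rec}(T^{-1})$: this follows from $\mathfrak{L}_T(\omega)=\mathfrak{L}_{T^{-1}}(\omega)$ applied to any $\omega\in\mathfrak{C}_T$ with $x\in\mathfrak{L}_T(\omega)$. Now take any $\theta \in \mathfrak{C}_{T^{-1}}$ with $x \in \mathfrak{L}_{T^{-1}}(\theta)$. Since $T^{-1}$ is itself recurrent and power-bounded, the symmetry applied to $T^{-1}$ gives $\mathfrak{L}_{T^{-1}}(\theta)=\mathfrak{L}_{(T^{-1})^{-1}}(\theta)=\mathfrak{L}_T(\theta)$, so $x \in \mathfrak{L}_T(\theta)$. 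By hyper-recurrence of $T$ and $x$, $\overline{\mathfrak{L}_T(\theta)}=X$, and hence $\overline{\mathfrak{L}_{T^{-1}}(\theta)}=X$. This shows $x \in \mathrm{Hr}(T^{-1})$, so $\mathrm{Hr}(T)\subset\mathrm{Hr}(T^{-1})$; the reverse inclusion follows by applying the same reasoning with the roles of $T$ and $T^{-1}$ exchanged. Thus $\mathrm{Hr}(T)=\mathrm{Hr}(T^{-1})$, and since $\mathrm{Hr}(T)\neq\emptyset$ by hypothesis, $T^{-1}$ is hyper-recurrent.

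The main obstacle I anticipate is the careful handling of the metric equivalence and the isometry property in a Fréchet (not Banach) setting: one must verify that $d_T^\ast$-convergence of a single orbit point implies $d$-convergence (clear, since $d\le d_T^\ast$) and, conversely, that $d$-convergence $T^{\omega_n}x\to x$ upgrades to $d_T^\ast$-convergence. For the latter, the point is that $d_T^\ast(T^{\omega_n}x,x) = \sup_{\ell\ge 0} d(T^{\ell+\omega_n}x, T^\ell x)$; using the $d_T^\ast$-contractivity $d_T^\ast(Ty,Tz)\le d_T^\ast(y,z)$ one gets $d_T^\ast(T^{\omega_n}x,x) = d_T^\ast(T^{\omega_n}x,x)$ does not shrink under $T$, so one needs instead to argue via the equivalence of $d$ and $d_T^\ast$ as metrics (Banach--Steinhaus) directly: $d$-convergence and $d_T^\ast$-convergence define the same topology, hence $T^{\omega_n}x\to x$ in $d$ is equivalent to $T^{\omega_n}x\to x$ in $d_T^\ast$. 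Once this equivalence is invoked cleanly, the isometry $T^{-\omega_n}$ (w.r.t.\ $d_T^\ast$) transports the convergence and the rest is formal. I would state the limit-set identity $\mathfrak{L}_T(\omega)=\mathfrak{L}_{T^{-1}}(\omega)$ as the single lemma-like step and then deduce \ref{item:powerbounded-2} in two lines.
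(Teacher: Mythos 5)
Your treatment of part \ref{item:powerbounded-2} is sound and is essentially the paper's own route: the paper also reduces everything to the single identity $d_{T}^{*}(T^{n}x,x)=d_{T}^{*}(x,T^{-n}x)$, which is exactly your limit-set symmetry $\mathfrak{L}_T(\omega)=\mathfrak{L}_{T^{-1}}(\omega)$, and your resolution of the $d$ versus $d_T^{*}$ convergence issue (they induce the same topology by Banach--Steinhaus equicontinuity, so one never needs a quantitative comparison) is the right one.

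The genuine gap is that you never prove part \ref{item:powerbounded-1}: you open by ``building on the fact established in Proposition~\ref{prop:inverse-of-power-bounded}~\ref{item:powerbounded-1},'' which is circular, since that assertion is half of what is to be proved, and it is the substantive half. It is not automatic --- the paper points out elsewhere that $\mathrm{Rec}(T)=X$ alone does not force invertibility (Mazet--Saias). What is needed, and what the paper supplies, is the following chain. First, by Proposition~\ref{esp} a power-bounded recurrent operator has $\mathrm{Rec}(T)=X$, so for arbitrary $x,y$ the vector $x-y$ is recurrent; choosing $(a_n)$ with $T^{a_n}(x-y)\to x-y$ and using translation invariance gives $d(x,y)=\lim_n d(T^{a_n}x,T^{a_n}y)\le \sup_{\ell\ge 1}d(T^{\ell}x,T^{\ell}y)=d_T^{*}(Tx,Ty)$, which combined with the trivial inequality $d_T^{*}(Tx,Ty)\le d_T^{*}(x,y)$ shows $T$ is a $d_T^{*}$-isometry (in particular injective). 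Second, surjectivity: if $T^{a_n}x\to x$ then $(T^{a_n}x)$ is $d_T^{*}$-Cauchy, hence by the isometry so is $(T^{a_n-1}x)$; completeness gives a limit $y$ with $Ty=x$. Third, the open mapping theorem upgrades the continuous bijection to an isomorphism, and only then do the isometry identity and the equality $\mathrm{Rec}(T^{-1})=\mathrm{Rec}(T)$ make sense. Your part \ref{item:powerbounded-2} argument quietly uses all of this (invertibility, the isometry property of $T^{-1}$, power-boundedness of $T^{-1}$), so the omission is not cosmetic; as written the proposal proves only the second assertion conditionally on the first.
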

\begin{proof}
We begin with the proof of \ref{item:powerbounded-1}.
As $T$ is recurrent, there exists, for each $x,y\in X$, a strictly increasing sequence $(a_{n})_{n\in \mathbb{N}}$ such that $T^{a_{n}}(x-y)$ converges to $(x-y)$.
Furthermore, as  $d_{T}^{*}(Tx,Ty)\geq d(T^{a_{n}}x,T^{a_{n}}y)$ for each $n\in \mathbb{N}$, we have that  $d_{T}^{*}(Tx,Ty)\geq d(x,y)$. Thus, $d_{T}^{*}(x,y)=d_{T}^{*}(Tx,Ty)$.

Note that a sequence is a Cauchy sequence for $d$ if and only if it is a Cauchy sequence for $d_{T}^{*}$ as the metrics are equivalent. In particular, if $x\in X$ and $(a_{n})_{n\in \mathbb{N}}$ with $\lim_n T^{a_{n}}x =n$, then $(T^{a_{n}}x)$ is a Cauchy sequence with respect to $d_{T}^{*}$. As $T$ is an isometry, $(T^{a_{n}-1}x)$ is a Cauchy sequence as well. Hence, $Ty =x$ for $y:= \lim_n T^{a_{n}-1}x$.
Since $T$ is a continuous bijection, it follows from the open mapping theorem
that $T$ is invertible.

Moreover, $\hbox{Rec}(T^{-1})= \hbox{Rec}(T)$ due to the simple fact that
\begin{equation} \label{eq:corollary-of-isometry}
d_{T}^{*}(T^{{n}}x,x) = d_{T}^{*}(x, T^{{-n}}x)
\end{equation}
for all $x \in X$ and $n \in \mathbb{N}$. As $\hbox{Rec}(T) =X$ by Proposition \ref{esp}, the remaining statement  of \ref{item:powerbounded-1} is proven. Assertion  \ref{item:powerbounded-2} follows from a further application of the identity
in \eqref{eq:corollary-of-isometry}.
\end{proof}

For power-bounded operators in separable Hilbert spaces, we can assume that  $T$ is unitary  (see \cite[Remark 9.4]{Cos}). By the spectral theorem for unitary operators, this implies that $T$ is conjugated to a multiplication operator on a $L^{2}$-space with finite measure. That is, there exists a measure space $(Y,\mathcal{C},\nu)$ where $\nu$ is a non-negative finite Borel measure and a function $u\in L^{\infty}(Y,\nu)$ with $\vert{u\vert}=1$ and a unitary map $\Phi: H\rightarrow L^{2}(Y,\nu)$ such that
   \begin{eqnarray*}
   M_{u} =\Phi\circ T\circ \Phi^{-1}:L^{2}(Y,\nu) \to  L^{2}(Y,\nu),
   f \mapsto  uf.
   \end{eqnarray*}
Observe that, since $M_{u}$ and $T$ are conjugated by an isometry, $\mathrm{Hr}(T)=\Phi^{-1}(\mathrm{Hr}(M_{u}))$. We now show that recurrent, power bounded operators in separable Hilbert spaces are always hyper-recurrent.

\begin{theorem} \label{Hr-H} \label{dim-finita}
Let $H$ be a separable, complex Hilbert space and $T\in \mathcal{B}(H)$ a power-bounded, recurrent operator. Then $T$ is hyper-recurrent and $\hbox{Hr}(T)$ is dense in $H$.
\end{theorem}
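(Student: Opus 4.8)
The plan is to exploit the reduction already set up above: since $T$ is power-bounded and recurrent on a separable complex Hilbert space, it is conjugate, via an isometry $\Phi$, to a multiplication operator $M_u\colon L^2(Y,\nu)\to L^2(Y,\nu)$, $f\mapsto uf$, where $\nu$ is a finite Borel measure and $|u|=1$ $\nu$-a.e.; as $\mathrm{Hr}(T)=\Phi^{-1}(\mathrm{Hr}(M_u))$, it suffices to produce a dense set of hyper-recurrent vectors for $M_u$. The basic computation is that $M_u^{n}f=u^{n}f$, so $f\in\mathfrak{L}(\omega)$ precisely when $\int_Y|u^{\omega_n}-1|^2|f|^2\,d\nu\to 0$; note also that, $M_u$ being conjugate to the recurrent, power-bounded operator $T$, Proposition~\ref{esp} gives $\mathrm{Rec}(M_u)=L^2(Y,\nu)$, so every vector is recurrent and every $f$ lies in $\mathfrak{L}(\omega)$ for at least one $\omega\in\mathfrak{C}$.

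I would then prove that every $f\in L^2(Y,\nu)$ with $f\neq 0$ $\nu$-a.e. is hyper-recurrent. Fix such an $f$ and suppose $f\in\mathfrak{L}(\omega)$ for some $\omega=(\omega_n)\in\mathfrak{C}$, i.e. $\int|u^{\omega_n}-1|^2|f|^2\,d\nu\to 0$. The key step is to show $L^\infty(Y,\nu)\subset\mathfrak{L}(\omega)$: for $g\in L^\infty(Y,\nu)$ and $\delta>0$ one splits $\int|u^{\omega_n}-1|^2|g|^2\,d\nu$ over $\{|f|\geq\delta\}$, where it is bounded by $\delta^{-2}\|g\|_\infty^2\int|u^{\omega_n}-1|^2|f|^2\,d\nu\to 0$, and over $\{|f|<\delta\}$, where it is bounded by $4\|g\|_\infty^2\,\nu(\{|f|<\delta\})$; since $\nu(\{f=0\})=0$ and $\nu$ is finite, the latter term is arbitrarily small for $\delta$ small, so a $\delta$-then-$n$ argument yields $g\in\mathfrak{L}(\omega)$. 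Because $\nu$ is finite, $L^\infty(Y,\nu)$ is dense in $L^2(Y,\nu)$ (truncation), whence $\overline{\mathfrak{L}(\omega)}=L^2(Y,\nu)$; as $\omega$ was arbitrary, $f\in\mathrm{Hr}(M_u)$.

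It remains to observe that $\{f\in L^2(Y,\nu): f\neq 0\ \nu\text{-a.e.}\}$ is dense: given $g\in L^2(Y,\nu)$ and $\varepsilon>0$, the function equal to $g$ on $\{g\neq 0\}$ and to a small constant $\varepsilon'$ on $\{g=0\}$ lies in $L^2(Y,\nu)$ (again using $\nu(Y)<\infty$), is nonzero a.e., and lies within $\varepsilon'\sqrt{\nu(Y)}$ of $g$. Transporting back through $\Phi^{-1}$ shows $\mathrm{Hr}(T)$ is dense, and in particular $T$ is hyper-recurrent. The only genuine subtlety — and the point to handle with care — is that $f\in\mathfrak{L}(\omega)$ only controls $u^{\omega_n}$ on $\{f\neq 0\}$, which is exactly why one restricts to vectors that vanish nowhere; the truncation/splitting estimate and the finiteness of $\nu$ (used also to guarantee $1\in L^2(Y,\nu)$ and $L^\infty\subset L^2$) are precisely what upgrade this partial control to pointwise convergence of $M_u^{\omega_n}$ to the identity on a dense set.
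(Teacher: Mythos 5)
Your proof is correct and follows essentially the same route as the paper: reduce to a multiplication operator $M_u$ on $L^2(Y,\nu)$ with $\nu$ finite and $|u|=1$, show that non-vanishing functions are hyper-recurrent because membership in $\mathfrak{L}(\omega)$ forces $u^{\omega_n}\to 1$ in $L^2(\nu)$ and hence $L^\infty(Y,\nu)\subset\mathfrak{L}(\omega)$, and conclude by density of such functions. The only (harmless) difference is that the paper perturbs $f$ to a function bounded below by a positive constant, which makes the deduction $\int_Y|u^{\omega_n}-1|^2\,d\nu\to 0$ immediate, whereas you work with the larger class of a.e.-nonzero functions at the cost of your $\delta$-splitting argument.
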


\begin{proof} Without loss of generality, assume that $T$ is a multiplication operator on $L^{2}(Y,\nu)$ with $u\in L^{\infty}(Y,\nu)$ and $\vert{u\vert}=1$. For any $f\in L^{2}(Y,\nu)$ and $\epsilon>0$, consider $g$ defined by
\begin{eqnarray*}
g(x) :=
\begin{cases}
f(x)\; &:  \; \vert{f(x)\vert}\geq \frac{\epsilon}{2\nu(Y)^{1/2}},\\
\tfrac{\epsilon}{2\nu(Y)^{1/2}} \;& :\; \vert{f(x)\vert}< \frac{\epsilon}{2\nu(Y)^{1/2}}.
\end{cases}
\end{eqnarray*}
Clearly, $\Vert{f-g\Vert}_{L^{2}(\nu)}<\epsilon$. Furthermore, as $|g|$ is bounded from below, if $ \omega \in \mathfrak{C}$ with $g\in \mathfrak{L}(\omega)$, then  $\int_{Y}\vert{u^{\omega_{n}}-1\vert}^{2}d\nu\xrightarrow[n\longrightarrow\infty]{}0$. Hence, $L^{\infty}(Y,\nu) \subset  \mathfrak{L}(\omega)$ and $g \in \mathrm{Hr}(T)$. As $\epsilon$ can be chosen arbitrarily,  $\mathrm{Hr}(T)$ is dense in $X$.
\end{proof}

The following illustrates which kind of rigidity one may encounter in the presence of hyper-recurrent vectors and a unitary operator on a Hilbert space. In here, we say that $u \in \mathbb{T}$ and the set $\Lambda \subset \mathbb{T}$ are \emph{log-rationally dependent} if there exists a finite subset $\{\lambda_1, \ldots, \lambda_k \} \subset \Lambda$ and $n_1, \ldots n_{k+1} \in \mathbb{Z}$ such that  $u^{n_{k+1}} = \prod_{i=1}^k \lambda_i^{n_i}$ and $n_i \neq 0$ for at least one $i=1, \ldots, k+1$.

\begin{proposition}\label{ortho}
Let $T: H\rightarrow H$ be a recurrent and power-bounded operator on a separable Hilbert space $H$ (i.e. $T$ is recurrent and unitary) such that there exists an invariant closed subspace  $V\subset H$ and an orthonormal basis $\{e_{i}\}_\mathcal{I}$,  for some $\mathcal{I} \subset \mathbb{N}$, of eigenvectors of $T\vert_{V}$ with eigenvalues $\{\lambda_{i}\}_\mathcal{I}$.
With respect to this basis of $V$, the following assertions are equivalent.
\begin{enumerate}
    \item The vector $\sum_{i\in \mathcal{I}}h_{i}e_{i}\in V$ is hyper-recurrent in $(H,T)$ for a sequence $(h_{i})_{i\in \mathcal{I}}$ contained in $\mathbb{C}$ such that $h_{i}\neq 0$ for all $i\in \mathcal{I}$.
    \item $\{\sum_{i}{g_{i}e_{i}}\in V: g_{i}\neq 0 \;\forall i\in \mathcal{I}\}\subset \mathrm{Hr}(T)$
    \item $(H, T)$ is conjugate by an unitary operator to $\left(\left(\bigoplus_{\alpha\in F}\mathcal{H}_{\alpha})\right)_{2},S\right)$, where $F \subset \mathbb{T}$ is a countable set such that $\{\lambda_{i}:i \in \mathcal{I}\} \subset F$ and each $\alpha \in F$ is log-rationally dependent from $\{\lambda_{i}\}$. Moreover, each $\mathcal{H}_{\alpha}$ is a Hilbert space and $S|_{\mathcal{H}_{\alpha}}(v) = \alpha v$.
\end{enumerate}
\end{proposition}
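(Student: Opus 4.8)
I would prove the equivalence along the cycle $(1)\Rightarrow(3)\Rightarrow(2)\Rightarrow(1)$, the last step being trivial: given $(2)$, any square-summable sequence $(g_i)_{i\in\mathcal I}$ with no vanishing entry yields a hyper-recurrent vector of the required shape. For the other two implications the starting observation is that a recurrent power-bounded operator on a Hilbert space is unitary (\cite[Remark 9.4]{Cos}, or Proposition~\ref{prop:inverse-of-power-bounded}), so one may fix a spectral representation: $T$ is unitarily equivalent to multiplication by $z$ on a direct-integral Hilbert space over $\mathbb{T}$, with projection-valued spectral measure $E$, and, since $H$ is separable, a finite maximal scalar spectral type $\mu$ on $\mathbb{T}$ (write $\mu_g(\cdot):=\|E(\cdot)g\|^2$ for the spectral measure of a vector $g$). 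For $\omega=(\omega_n)\in\mathfrak{C}$ set $C(\omega):=\{z\in\mathbb{T}:\lim_n z^{\omega_n}=1\}$, a Borel subgroup of $\mathbb{T}$.

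The computational heart is the identity $\mathfrak{L}(\omega)=E(C(\omega))H$. Indeed $\|T^{\omega_n}g-g\|^2=\int_{\mathbb{T}}|z^{\omega_n}-1|^2\,d\mu_g$ in the spectral model, and since the integrand is bounded by $4\in L^1(\mu_g)$, dominated convergence gives $E(C(\omega))H\subset\mathfrak{L}(\omega)$ and reverse Fatou gives the opposite inclusion; in particular $\mathfrak{L}(\omega)$ is closed (consistently with Proposition~\ref{esp}) and $\overline{\mathfrak{L}(\omega)}=H$ iff $\mu(\mathbb{T}\setminus C(\omega))=0$. Applying the same identity to $v=\sum_{i\in\mathcal I}h_ie_i\in V$ with every $h_i\neq0$ gives $\|T^{\omega_n}v-v\|^2=\sum_i|h_i|^2|\lambda_i^{\omega_n}-1|^2$, so $v\in\mathfrak{L}(\omega)$ iff $\Lambda:=\{\lambda_i:i\in\mathcal I\}\subset C(\omega)$, and $\mu_v=\sum_i|h_i|^2\delta_{\lambda_i}$.

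For $(1)\Rightarrow(3)$, let $v$ as above be hyper-recurrent; by the previous paragraph this says exactly that $\mu(\mathbb{T}\setminus C(\omega))=0$ for \emph{every} $\omega\in\mathfrak{C}$ with $\Lambda\subset C(\omega)$. I would deduce that $\mu$ is purely atomic and concentrated on the subgroup $\langle\Lambda\rangle$ generated by $\Lambda$; since $\Lambda$ always lies in the point spectrum, this means $T$ has pure point spectrum with eigenvalue set $F\subset\langle\Lambda\rangle$ (so each $\alpha\in F$, being a product $\prod_j\lambda_{i_j}^{n_j}$, is log-rationally dependent on $\Lambda$), which is precisely the orthogonal direct-sum normal form in $(3)$. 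The deduction is by contradiction, via the Lebesgue decomposition of $\mu$. If $\mu$ charged a point $\alpha\notin\langle\Lambda\rangle$, I would use that $G:=\overline{\langle(\lambda_i)_{i\in\mathcal I}\rangle}\subset\mathbb{T}^{\mathcal I}$ is a compact monothetic group and that the continuous surjective homomorphism $\overline{\langle((\lambda_i)_i,\alpha)\rangle}\to G$ (drop the last coordinate) has nontrivial kernel precisely because $\alpha\notin\langle\Lambda\rangle$ --- a Pontryagin-duality computation with the characters $\prod_i\lambda_i^{m_i}\alpha^{k}$; choosing a nontrivial $\zeta$ in that kernel and approximating it by powers of the generator produces $\omega\in\mathfrak{C}$ with $\lambda_i^{\omega_n}\to1$ for all $i$ yet $\alpha^{\omega_n}\to\zeta\neq1$, contradicting $\mu(\{\alpha\})>0$. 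If $\mu$ had a nonzero non-atomic part, I would, after passing to a countable dense subset of $\Lambda$ when $\mathcal I$ is infinite, interleave the countably many requirements $\lambda_i^{\omega_n}\to1$ into a single $\omega$ for which $C(\omega)$ is $\mu$-null away from the atoms --- the key input being that sets of the form $C(\omega)$ can be kept small, via Borel--Cantelli-type estimates on $\{z:|z^{\omega_n}-1|<\varepsilon_n\}$ --- again contradicting $(1)$.

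Finally $(3)\Rightarrow(2)$: write $H=(\bigoplus_{\alpha\in F}\mathcal{H}_{\alpha})_2$ with $Sv=\alpha v$ on $\mathcal{H}_{\alpha}$ and let $y=\sum_{i\in\mathcal I}g_ie_i$ with every $g_i\neq0$; if $y\in\mathfrak{L}(\omega)$ then $\lambda_i^{\omega_n}\to1$ for all $i$, and substituting this into the relations $\alpha^{n_\ast}=\prod_j\lambda_{i_j}^{n_j}$ witnessing log-rational dependence gives $\alpha^{\omega_n}\to1$ for every $\alpha\in F$, so $T^{\omega_n}\to I$ pointwise on the dense subspace $\bigoplus_{\alpha\in F}\mathcal{H}_{\alpha}$ and $\overline{\mathfrak{L}(\omega)}=H$, i.e.\ $y\in\mathrm{Hr}(T)$. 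I expect the genuine difficulty to lie in $(1)\Rightarrow(3)$ and, within it, in ruling out a continuous spectral component: since rigid operators with purely continuous spectrum exist, recurrence alone does not force pure point spectrum, so the construction of the ``bad'' sequence $\omega$ must really exploit that every $h_i$ is nonzero, and the bookkeeping that makes one $\omega$ return all of $\Lambda$ to $1$ while still failing to rigidify the non-atomic part of $\mu$ is the delicate point.
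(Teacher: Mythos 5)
Your cycle breaks at $(3)\Rightarrow(2)$. From $\lambda_{i_j}^{\omega_n}\to 1$ and a dependence relation $\alpha^{n_{k+1}}=\prod_j\lambda_{i_j}^{n_j}$ you can only conclude $\alpha^{n_{k+1}\omega_n}\to 1$, i.e.\ that the cluster points of $(\alpha^{\omega_n})_n$ are $n_{k+1}$-th roots of unity --- not that $\alpha^{\omega_n}\to 1$. With the paper's definition of log-rational dependence, which allows $|n_{k+1}|\geq 2$, the step genuinely fails: take $\alpha\in\mathbb{T}$ not a root of unity, $\lambda_1=\alpha^2$, and $\omega$ with $\alpha^{\omega_n}\to -1$; then $\lambda_1^{\omega_n}\to 1$ but the eigenvector for $\alpha$ is not in $\mathfrak{L}(\omega)$. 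What your argument actually requires is $\alpha\in\langle\{\lambda_i\}\rangle$ (exponent $1$ on $\alpha$), and since you reach $(2)$, and hence $(1)$, only through $(3)$, the collapse propagates through the whole cycle. The paper avoids this entirely by proving $(1)\Leftrightarrow(2)$ directly from the two-line identity $\Vert T^{\omega_n}y-y\Vert^2=\sum_i|g_i|^2|\lambda_i^{\omega_n}-1|^2$, which shows that the collection of sequences $\omega$ with $y\in\mathfrak{L}(\omega)$ is the same for every $y$ with all coefficients nonzero; it declares $(3)\Rightarrow(1)$ ``not very difficult'' and omits it, so you get no cover from the paper for this step.

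Second, the identity $\mathfrak{L}(\omega)=E(C(\omega))H$ on which your reduction rests is not established: $\int|z^{\omega_n}-1|^2\,d\mu_g\to 0$ gives convergence in $\mu_g$-measure, hence a.e.\ convergence only along a subsequence, and ``reverse Fatou'' (which with the dominating constant runs in the wrong direction) does not yield $\mu_g(\mathbb{T}\setminus C(\omega))=0$; only $E(C(\omega))H\subset\mathfrak{L}(\omega)$ is safe. This is repairable by passing to subsequences throughout (if $v\in\mathfrak{L}(\omega)$ then $v\in\mathfrak{L}(\mu)$ for every subsequence $\mu$ by Proposition \ref{esp}, so the hyper-recurrence hypothesis applies to $\mu$ as well), and that is exactly the paper's mechanism for $(1)\Rightarrow(3)$: from $\mathfrak{L}(\omega)=H$ it tests the constant function $1\in L^2(Y,\nu)$, extracts a subsequence $\theta$ along which $u(y)^{\theta_n}\to 1$ for a.e.\ $y$, and then applies the Kronecker/minimality redirection argument pointwise in $y$ to show that a.e.\ value of $u$ is log-rationally dependent on $\{\lambda_i\}$; atomicity of the spectral measure then follows from countability of the dependent set, with no separate treatment of a continuous component. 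Your plan instead splits into ``extra atoms'' (where your duality argument is essentially the paper's and is fine) and ``continuous part'', for which the construction of a single $\omega$ with $\Lambda\subset C(\omega)$ and $C(\omega)$ null for the continuous component is left at the level of ``Borel--Cantelli-type bookkeeping''; that is precisely the delicate point you flag yourself, it is not substantiated, and it is not the route the paper takes.
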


\begin{proof} The implication from (2) to (1) is immediate. For the backward direction, assume that $y\in V$  is a hyper-recurrent vector with $y=\sum_{i\in \mathcal{I}}g_{i}e_{i}$ with $g_{i}\neq 0$ for all $i \in \mathcal{I}$. Hence, if $y\in \mathfrak{L}(\omega)$ for some sequence $\omega\in \mathfrak{C}$, then
\begin{eqnarray*}
    \Vert{T^{\omega_{n}}y-y\Vert}^{2}=\sum_{i\in \mathcal{I}}\vert{g_{i}\vert}^{2}\vert{\lambda_{i}^{\omega_{n}}-1\vert}^{2}\longrightarrow 0.
\end{eqnarray*}
In particular, this is equivalent to $\lim_n \lambda_{i}^{\omega_{n}} =1$ for each $i\in \mathcal{I}$. As this equivalence only requires that the $g_{i}\neq 0$, it follows that (1) implies (2).

We now show that (1) implies (3). By the spectral theorem for unitary operators, $(H,T)$ is conjugated by a unitary operator $\Phi$ to $(L^{2}(Y, \nu), M_{u})$.
For $\beta\in \mathbb{T}$, set
\begin{eqnarray*}
L(\beta) : =\{\omega=(\omega_{n})_{n\in \mathbb{N}}: \omega_{n}\in \mathbb{N}, \omega_{n} \uparrow \infty\; \hbox{and}\; \beta^{\omega_{n}}\longrightarrow 1\}
\end{eqnarray*}
For $x = \sum h_i e_i \in \mathrm{Hr}(T)$ given by (1), it then follows as above that
\[
\Phi(x) \in \mathfrak{L}_{M_{u}}(\omega) \iff  \omega \in \bigcap_{i \in \mathcal{I} } L(\lambda_{i}).
\]
\begin{claim}
 $u(y)$ and $\langle{(\lambda_{i})_{i\in \mathcal{I}}\rangle}$ are log-rationally dependent for $\nu$-almost every $y\in Y$.
 \end{claim}
 If $\omega\in \mathfrak{C}$ with $ \Phi(x)\in \mathfrak{L}_{M_{u}}(\omega)$, then $\mathfrak{L}_{M_{u}}(\omega)=L^{2}(Y,\nu)$. In particular, for the function $1\in L^{2}(Y,\nu)$, we have that
 \begin{eqnarray*}
 \Vert{M_{u}^{\omega_{n}}1-1\Vert}^{2}=\int_{Y}\vert{(u(y))^{\omega_{n}}-1\vert}^{2}d\nu(y) \xrightarrow{n\rightarrow \infty}  0.
 \end{eqnarray*}
Hence, there exists a subsequence $\theta:=(\theta_{n})_{n}$ of $\omega$ such that $(u(y))^{\theta_{n}}$ converges to $1$ as $n\rightarrow \infty$ for $\nu$-almost every $y\in Y$.
Or, in other words, there exists a  mensurable set $Y^{\prime}\subset Y$ such that $\nu(Y^{\prime})= \nu(Y)$ and
$\theta \in L(u(y))$ for all  $y\in Y^{\prime}$. Hence,
\begin{equation} \label{eq:here is theta}
\theta \in \bigcap_{y\in Y^{\prime}}   L(u(y)) \cap \bigcap_{i \in \mathcal{I} } L(\lambda_{i}).
\end{equation}

Now assume, for $y\in Y^{\prime}$, that $u(y)$ is log-rationally independent from  $\{ \lambda_{i}: {i\in \mathcal{I}} \}$. It then follows as in the proof of Proposition \ref{RnH} that there exists  $\beta\in \mathbb{T}\setminus\{1\}$ such that there is a strictly increasing sequence of positive integers $(\psi_{\ell})_{\ell\in \mathbb{N}}$ with
\begin{eqnarray*}
    (u(y))^{\psi_{\ell}}\xrightarrow{\ell\rightarrow\infty} \beta, \; \;\lambda_{i}^{\psi_{\ell}}\xrightarrow{\ell\rightarrow\infty}1, \forall i\in \mathcal{I},
\end{eqnarray*}
which is a contradiction to \eqref{eq:here is theta}. This proves the claim.

\medskip

\noindent This now gives rise to the following construction. Set
\begin{align*}
 F:= \left\{\alpha \in \mathbb{T} : \alpha \hbox{ and } \{ \lambda_{i}\}  \; \hbox{ are $\mathbb{Q}$-dependent},\;  \nu(\{y\in Y: u(y)=\alpha\})>0\right\}.
\end{align*}
As the set of finite subsets of a countable set and $\mathbb{Z}$ are countable, it follows that $F$ is at most countable. It now follows from our claim  that $\{y \in Y : u(y) \in F\}$ is a set of full measure. In particular, we have that, for each $f \in L^{2}(Y,\nu)$,
\[
M_{u}f = \sum_{\alpha \in F} \alpha \left(f \chi_{\{y: u(y)=\alpha\}}  \right).
\]
In particular, one obtains the desired conjugation with respect to $\mathcal{H}_{\alpha} := L^2(\{y: u(y)=\alpha\},\nu)$
Moreover, as  $M_{u}(\Phi(e_{i}))=\lambda_{i}\Phi(e_{i})$, there exists  for each $i \in \mathcal{I}$ an $\alpha \in F$ with $\alpha=\lambda_{i}$. This proves that (1) imples (3).

The remaining direction, that is (3) implies (1), is not very difficult to verify.
\end{proof}

We now show, that in the context of an isometry on a complex Banach, hyper-recurrence implies that
$\mathrm{Hr}(T)$ is not very small.

\begin{proposition} Let $X$ be an infinite-dimensional complex Banach space and $T\in \mathcal{L}(X)$ be a hyper-recurrent and surjective isometry. Then, the set $\overline{\mathrm{Hr}(T)}$ contains an infinite-dimensional $T$-invariant subspace.
\end{proposition}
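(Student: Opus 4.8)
The plan is to prove the stronger assertion that $\mathrm{Hr}(T)$ is dense in $X$, so that $\overline{\mathrm{Hr}(T)}=X$ is itself the required (infinite-dimensional) $T$-invariant subspace — with one easy shortcut: if some hyper-recurrent vector happens to have an infinite-dimensional $T$-cyclic span, that span already works. I would begin with the reductions furnished by Proposition \ref{prop:inverse-of-power-bounded}: a surjective isometry is power-bounded and invertible, $\mathrm{Rec}(T)=X$, every $\mathfrak L(\omega)$ is a closed $T$-invariant subspace (Proposition \ref{esp}), and $\mathrm{Hr}(T)=\mathrm{Hr}(T^{-1})$; moreover $T$ is rigid, being power-bounded and quasi-rigid. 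Writing $\mathfrak R:=\{\omega\in\mathfrak C:\mathfrak L(\omega)=X\}\neq\emptyset$, closedness of the $\mathfrak L(\omega)$ turns hyper-recurrence of a vector $x$ into the clean statement $\{\omega:x\in\mathfrak L(\omega)\}=\mathfrak R$ (one inclusion is hyper-recurrence, the other holds because $\mathfrak R\subseteq\{\omega:y\in\mathfrak L(\omega)\}$ for every $y\in X$).

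Next I fix $x\in\mathrm{Hr}(T)$ and exploit the functional calculus: for every Laurent polynomial $p$ with no zero on $\mathbb T$ the operator $p(T)$ is bounded, invertible and commutes with $T$, so $p(T)x\in\mathfrak L(\omega)\Leftrightarrow x\in\mathfrak L(\omega)$ and hence $p(T)x\in\mathrm{Hr}(T)$. Since such $p$ are dense among Laurent polynomials (a prospective zero on $\mathbb T$ is removed by an arbitrarily small perturbation of the coefficients) and $q\mapsto q(T)x$ is continuous, we obtain $E_x:=\overline{\mathrm{span}}\{T^nx:n\in\mathbb Z\}\subseteq\overline{\mathrm{Hr}(T)}$, and $E_x$ is $T$-invariant. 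If $\dim E_x=\infty$ for some $x\in\mathrm{Hr}(T)$ we are done, so assume from now on that every hyper-recurrent vector has finite-dimensional cyclic span, and fix such an $x$. By Theorems 4.1 and 4.2 in \cite{Cos} the finite-dimensional power-bounded operator $T|_{E_x}$ is similar to a unitary matrix, and since $x$ is cyclic it has distinct unimodular eigenvalues $\lambda_1,\dots,\lambda_d$ with eigenvectors $e_1,\dots,e_d$ spanning $E_x$ and $x=\sum_i h_ie_i$ with all $h_i\neq0$; consequently $\mathfrak R=\{\omega:\lambda_i^{\omega_n}\to1,\ i=1,\dots,d\}$.

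It then suffices to produce a bounded projection $\Pi$ on $X$ that commutes with $T$, whose range $Z$ is a closed $T$-invariant subspace containing $e_1,\dots,e_d$, and such that $\mathrm{Hr}(T)\cap Z$ is dense in $Z$. Granting this, density of $\mathrm{Hr}(T)$ follows at once: given $w_0\in X$ and $\epsilon>0$, pick $v\in\mathrm{Hr}(T)\cap Z$ with $\|v-\Pi w_0\|<\epsilon$ and set $w:=w_0-\Pi w_0+v$; then $\Pi w=v\in\mathrm{Hr}(T)$, and as $\Pi$ commutes with $T$ and is continuous, $w\in\mathfrak L(\omega)$ forces $v=\Pi w\in\mathfrak L(\omega)$, hence $\omega\in\mathfrak R$, so $w\in\mathrm{Hr}(T)$ while $\|w-w_0\|<\epsilon$. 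To build $\Pi$ I would first show that each $\lambda_i$ is an isolated point of $\sigma(T)$: otherwise, using that $T^{\omega_n}\to I$ strongly for every $\omega\in\mathfrak R$ together with the minimality of irrational torus rotations (as in the proof of Proposition \ref{RnH}), one produces $\omega\in\mathfrak R$ and a spectral point $\mu\neq\lambda_i$ with $\mu^{\omega_n}\not\to1$, contradicting $\mathfrak L(\omega)=X$. Isolatedness yields Riesz projections $R_i$ at $\lambda_i$, commuting with $T$, mutually orthogonal, and — since $T$ is power-bounded, so there are no Jordan blocks at unimodular points — with range $\ker(T-\lambda_i)\supseteq\mathbb Ce_i$; one takes $Z:=\bigoplus_{i=1}^d\ker(T-\lambda_i)$ and $\Pi:=\sum_iR_i$. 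Any $v=\sum_iv_i\in Z$ with all $v_i\neq0$ satisfies $v\in\mathfrak L(\omega)\Leftrightarrow\lambda_i^{\omega_n}\to1\ \forall i\Leftrightarrow\omega\in\mathfrak R$, hence $v\in\mathrm{Hr}(T)$, and such $v$ are dense in $Z$; thus $\overline{\mathrm{Hr}(T)}=X$.

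The genuinely delicate step is the assertion that the $\lambda_i$ are isolated in $\sigma(T)$ — equivalently, that hyper-recurrence of $x$ pins down the spectrum of $T$ up to the group generated by $\lambda_1,\dots,\lambda_d$. Point-spectrum obstructions are killed immediately by the strong convergence $T^{\omega_n}\to I$ along rigidifying sequences, but the continuous and residual parts of the spectrum give no such direct contradiction, since approximate eigenvectors are not preserved under iteration in a useful way; I expect the honest route is to pass to a spectral (or Gelfand-type) representation of the closed operator algebra generated by $T$, reduce to a multiplication-operator model, and there argue that rigidity forces the underlying "frequencies" to be log-rationally dependent on $\{\lambda_i\}$ almost everywhere, exactly in the spirit of Proposition \ref{ortho}. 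This is where the main work of the proof will concentrate.
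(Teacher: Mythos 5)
Up to the point where you must produce the commuting projection $\Pi$, your argument is correct and runs parallel to the paper's: the reductions, the observation that $p(T)x\in\mathrm{Hr}(T)$ for invertible $p(T)$ (the paper gets the same conclusion by iterating linear factors $\alpha x+\beta Tx$ with $|\alpha|\neq|\beta|$ and perturbing roots off $\mathbb{T}$, which is your step in disguise), the dichotomy on $\dim E_x$, the diagonalization of $T|_{E_x}$, and the "granting this" paragraph are all fine. The genuine gap is the step you yourself flag as delicate: the claim that each $\lambda_i$ is an isolated point of $\sigma(T)$ is false, not merely hard. Take $X=\ell^2(\mathbb{N})$, fix $\lambda=e^{2\pi i\alpha}$ with $\alpha$ irrational, choose integers $n_k$ with $\{\lambda^{n_k}\}_k$ dense in $\mathbb{T}$, and set $Te_0=\lambda e_0$, $Te_k=\lambda^{n_k}e_k$. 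This is a surjective isometry; $e_0$ is hyper-recurrent, since $\lambda^{\omega_n}\to 1$ forces $\lambda^{n_k\omega_n}\to 1$ for every $k$ and hence $\mathfrak{L}(\omega)\supseteq\mathrm{span}\{e_k: k\geq 0\}$; yet $E_{e_0}=\mathbb{C}e_0$ is one-dimensional while $\sigma(T)=\mathbb{T}$, so $\lambda_1=\lambda$ is not isolated. Riesz projections are therefore unavailable, and the fallback you sketch (a Gelfand or multiplication-operator model in the spirit of Proposition \ref{ortho}) does not exist for a surjective isometry of a general Banach space — that proposition lives on Hilbert space precisely because it uses the spectral theorem. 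As written, the proof is incomplete at its decisive step.

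The repair requires no spectral information at all, and this is where the paper's route differs from yours. The projection onto $Z=\mathrm{span}(e_1,\dots,e_d)$ can be manufactured directly from the eigenvectors: choose $\phi_i\in X^{*}$ with $\phi_i(e_j)=\delta_{ij}$ and set $\psi_i(y):=\mathrm{LIM}_n\,\lambda_i^{-n}\phi_i(T^{n}y)$ for a Banach limit $\mathrm{LIM}$. Since $T$ is an isometry, $\psi_i$ is bounded with $\|\psi_i\|\leq\|\phi_i\|$, and $\psi_i\circ T=\lambda_i\psi_i$, $\psi_i(e_j)=\delta_{ij}$; hence $\Pi:=\sum_{i=1}^{d}\psi_i(\cdot)\,e_i$ is a bounded projection onto $Z$ commuting with $T$, and every $v=\sum_i v_ie_i$ with all $v_i\neq 0$ lies in $\mathrm{Hr}(T)$ because $v\in\mathfrak{L}(\omega)$ forces $\lambda_i^{\omega_n}\to 1$ for all $i$. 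Plugging this $\Pi$ into your "granting this" paragraph finishes the proof. The paper's own version of this step writes $X=E_x\oplus L$ for a closed complement $L$ and shows that $p+q\in\mathfrak{L}(\omega)$ with $p=\sum u_ie_i$, $u_i\neq 0$, $q\in L$, forces $x\in\mathfrak{L}(\omega)$; the coordinatewise splitting of $\lim_n T^{\omega_n}(p+q)=p+q$ used there tacitly requires $L$ to be $T$-invariant, which is exactly what the kernels of the functionals $\psi_i$ supply. In short: your architecture is sound, but the isolated-spectrum claim must be replaced by this Banach-limit construction.
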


\begin{proof} Note that $\mathrm{Rec}(T)=X$ and $\mathfrak{L}(\omega)$ always is closed as isometries are power-bounded.

We begin with showing  that, if $x$ is a hyper-recurrent vector for $T$, then $\hbox{span}\{\mathcal{O}_{T}(x)\}\subset \overline{\mathrm{Hr}(T)}$ and that $\alpha x+\beta Tx$ is also hyper-recurrent for each $\alpha, \beta\in \mathbb{C}\textstyle{\setminus\{0\}}$ with $\vert{\alpha\vert}\neq \vert{\beta\vert}$.
Without loss of generality, assume that $\vert{\alpha\vert}<\vert{\beta\vert}$. For $\omega\in \mathfrak{C}$ with $\alpha x+\beta Tx$ in $\mathfrak{L}(\omega)$, we use the triangular inequality and that $T$ is an isometry to obtain that
\begin{eqnarray*}
 \Vert{T^{\omega_{n}}(\alpha x+\beta Tx)-(\alpha x+\beta Tx)\Vert} \geq (\vert{\beta\vert}-\vert{\alpha\vert}) \Vert{T^{\omega_{n}}x-x\Vert} \xrightarrow[n\rightarrow \infty]{}0.
\end{eqnarray*}
Hence $x \in \mathfrak{L}(\omega)$ and, as $x \in \mathrm{Hr}(T)$,  $\mathfrak{L}(\omega)=X$. In particular, $\alpha x+\beta Tx \in \mathrm{Hr}(T)$.

We now use this result to show that $P(T)(x)$ belongs to $\overline{\mathrm{Hr}(T)}$ for any polynomial $P$. So assume that $P$ is of the form $ P(t)=\xi \prod_{i=1}^{m}(t-\lambda_{i})$ for some complex numbers $\xi, \lambda_{1}, \ldots, \lambda_{m}$. If $|\lambda_{i}| \neq 1$ for all $i$, then the above implies that
$P(T)(x) \in {\mathrm{Hr}(T)}$. Furthermore, in order to analyse the general case, it suffices to approximate $P$ as follows.
Assume that  $Q_{k}(t)=\xi \prod_{i=1}^{m}(t-\lambda_{i,k})$ is a polynomial $Q_{k}$ of degree $m$ with roots $\{\lambda_{i,k}\}_{i=1}^{m}$ such that the sequence $\lambda_{i,k}$ converges to $\lambda_{i}$ as $k\rightarrow \infty$ and $\vert{\lambda_{i,k}\vert}\neq 1$. Then
$Q_{k}(T)(x) \to P(T)(x)$ and $Q_{k}(T)(x) \in \mathrm{Hr}(T)$, which then implies that $P(T)(x) \in \overline{\mathrm{Hr}(T)}$.

The following two scenarios arise for $E_x := \mathrm{span}(\{T^{n}x: n\geq 0\})$. If $E_x$ has infinite dimension, then $\overline{\mathrm{Hr}(T)}$ contains an infinite-dimensional $T$-invariant subspace. On the other hand, if the dimension of $E_x$ is finite, then $T|_{E_{x}}$ is conjugated to a diagonal matrix with unimodular entries \cite[Theorem 4.1]{Cos}. Hence, there exists a finite basis of eigenvectors $\{e_i\}$ with eigenvalues $\lambda_i$. Moreover, by construction, $x = \sum h_ie_i$ with $h_i \neq 0$ for all $i$ and $x \in \mathfrak{L}(\omega)$ if and only if $\lim_n \lambda_i^{\omega_n} = 1$ for all $i$.

As $E_x$ has  finite dimension, there exists a closed subspace $L \subset X$ such that $X = E_{x} \oplus L$. For $q \in L$ and $p = \sum u_i e_i$ with $u_i \neq 0$ for all $i$, we then have, for $\omega_n \uparrow \infty$, that
\begin{align*}
\lim_{n \to \infty} T^{\omega_n}(p+q) = p+q & \iff \lim_{n \to \infty} T^{\omega_n}(p)  = p \; \mathrm{ and }  \; \lim_{n \to \infty} T^{\omega_n}(q)  = q\\
& \iff \lim_{n \to \infty} T^{\omega_n}(x)  = x  \; \mathrm{ and }  \; \lim_{n \to \infty} T^{\omega_n}(q)  = q  \\ & \iff \lim_{n \to \infty} T^{\omega_n}(x+q) = x+q.
\end{align*}
Hence, $p+q \in \mathfrak{L}(\omega)$ if and only if $x+q \in \mathfrak{L}(\omega)$. It now follows from hyper-recurrence of $x$ that  $\mathfrak{L}(\omega) = X$ whenever $x \in \mathfrak{L}(\omega)$. It follows from this that  $ p+q \in \mathfrak{L}(\omega) = X$ for any $q \in L$ and $p$ as above.
This concludes the proof.
\end{proof}

\subsection{Multiplication Operators} \label{subsec:mo}

Note that the spectral theorem for normal operators on separable Hilbert space implies that the operator is conjugated to a multiplication operator. In order to generalize this setting, we now consider operators of the form
\begin{eqnarray*}
M_{\phi}: X\to X, \; f \mapsto \phi f,
\end{eqnarray*}
where $X$ is a Banach space and $\phi f$ is well-defined pointwise multiplication. For example, in case of a normal operator, one may consider $X = L^2(X,\nu)$ and $\varphi \in L^\infty(X,\nu)$. A further classical example are multiplication operators on unital Banach spaces, i.e. Banach Algebras with a multiplicative identity element.
%
Recall that, in a  unital Banach algebra $X$, the set of invertible elements, denoted by $G(X)$, is open (cf. \cite[Theorem 10.12]{Rudin:1991}).

\begin{theorem}\label{multi}
Let $X$ be a unital Banach algebra, $\theta\in X$ and let $M_{\theta}$ refer to the operator given by  multiplication  either from the left or from the right. Then  $M_{\theta}$ is recurrent if and only if $M_{\theta}$ is hyper-recurrent. In both cases, $G(X)\subset \mathrm{Hr}(M_{\theta})$.
\end{theorem}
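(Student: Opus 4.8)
The plan is to leverage two structural facts: recurrence of $M_\theta$ forces $\mathrm{Rec}(M_\theta)$ to be dense, while the group of units $G(X)$ is open and nonempty in $X$ (it contains the unit $1$; see \cite[Theorem 10.12]{Rudin:1991}), so the two sets must intersect. Since the implication hyper-recurrent $\Rightarrow$ recurrent is among the implications recorded in the introduction, and since $G(X)\neq\emptyset$ means that $G(X)\subset\mathrm{Hr}(M_\theta)$ already forces $\mathrm{Hr}(M_\theta)\neq\emptyset$ and hence that $M_\theta$ is hyper-recurrent, the entire statement reduces to proving: \emph{if $M_\theta$ is recurrent, then $G(X)\subset\mathrm{Hr}(M_\theta)$.} I would carry out the argument for left multiplication $M_\theta f=\theta f$; the case of right multiplication $M_\theta f = f\theta$ is obtained verbatim by interchanging the side on which one multiplies throughout.

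The key step I would isolate first is the claim that recurrence of $M_\theta$ already produces a sequence $\omega\in\mathfrak{C}$ with $\theta^{\omega_n}\to 1$, equivalently $\mathfrak{L}(\omega)=X$. To see this, pick $u\in\mathrm{Rec}(M_\theta)\cap G(X)$, which is possible because $\mathrm{Rec}(M_\theta)$ is dense and $G(X)$ is open and nonempty. By definition of a recurrent vector there is $\omega\in\mathfrak{C}$ with $\theta^{\omega_n}u\to u$; applying the bounded operator ``right multiplication by $u^{-1}$'' (bounded since the norm is submultiplicative) yields $\theta^{\omega_n}=\theta^{\omega_n}uu^{-1}\to uu^{-1}=1$. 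Continuity of multiplication then gives $\theta^{\omega_n}f\to f$ for every $f\in X$, i.e. $\mathfrak{L}(\omega)=X$.

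With this in hand I would fix an arbitrary $v\in G(X)$. From $\theta^{\omega_n}\to 1$ we get $\theta^{\omega_n}v\to v$, so $v$ is a recurrent vector. Now suppose $v\in\mathfrak{L}(\mu)$ for some $\mu\in\mathfrak{C}$, that is $\theta^{\mu_n}v\to v$; multiplying on the right by $v^{-1}$ gives $\theta^{\mu_n}\to 1$, hence $\mathfrak{L}(\mu)=X$ exactly as before. Therefore $v\in\mathrm{Hr}(M_\theta)$, which proves $G(X)\subset\mathrm{Hr}(M_\theta)$ and, in particular, that $M_\theta$ is hyper-recurrent. I do not expect a serious obstacle here: the only points needing care are the symmetric treatment of left versus right multiplication and the (routine) observation that multiplication by a fixed invertible element is a continuous linear operation on $X$, which is precisely submultiplicativity of the norm; note also that the conclusion obtained is in fact the stronger one that $M_\theta$ is rigid whenever it is recurrent.
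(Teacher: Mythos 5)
Your proof is correct and follows essentially the same route as the paper: pick a recurrent vector in the open set $G(X)$, deduce $\theta^{\omega_n}\to e$ by undoing the multiplication by that invertible element, conclude $\mathfrak{L}(\omega)=X$ by submultiplicativity, and then repeat the argument for an arbitrary unit to get $G(X)\subset\mathrm{Hr}(M_\theta)$. The only cosmetic difference is that the paper invokes the open mapping theorem to get a lower bound $\Vert yx\Vert\geq c(x)\Vert y\Vert$, whereas you simply apply the bounded operator $R_{u^{-1}}$ directly, which is marginally cleaner; your explicit verification that \emph{every} $v\in G(X)$ is hyper-recurrent also fills in a step the paper leaves implicit.
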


\begin{proof} Assume $M_{\theta}$ is a left-multiplication operator on $X$ and is recurrent. Then, there exists a recurrent vector $x$ in $G(X)$ since $G(X)$ is open in $X$. By the open mapping theorem, the right multiplication $R_{x}: X\rightarrow X,  y \to yx$ is an isomorphism. Hence, there exists a constant $c(x)>0$ such that $\Vert{yx\Vert}\geq c(x)\Vert{y\Vert}$ for all $y\in X$.

Let us show that $x$ is a hyper-recurrent vector. Choose $\omega\in \mathfrak{C}$ such that $x\in \mathfrak{L}(\omega)$. Then
\begin{eqnarray*}
\Vert{M_{\theta}^{\omega_{n}}x-x\Vert}=\Vert{\theta^{\omega_{n}}x-x\Vert}\geq c(x)\Vert{\theta^{\omega_{n}}-e\Vert}\xrightarrow{n\rightarrow \infty}  0,
\end{eqnarray*}
which implies that the sequence $\theta^{\omega_{n}}$ converges to $e$. Therefore, $\mathfrak{L}(\omega)=X$, as for each $y\in X$,
\begin{eqnarray*}
\Vert{M_{\theta}^{\omega_{n}}y-y\Vert}=\Vert{\theta^{\omega_{n}}y-y\Vert}\leq \Vert{\theta^{\omega_{n}}-e\Vert} \Vert{y\Vert}\xrightarrow{n\rightarrow \infty} 0.
\end{eqnarray*}
Consequently, every element in $X$ is a recurrent vector and $x \in \mathrm{Hr}(M_{\theta})$. This implies that $G(X)\subset \mathrm{Hr}(M_{\theta})$.
\end{proof}

\begin{example}\label{mul-conexo}
Consider a compact and connected metric space $(K,d)$. For any $\phi\in C(K)$, if the left-multiplication operator $M_{\phi}$ is recurrent, then there exists $\beta\in \mathbb{T}$ such that $\phi(p)=\beta$ for every $p\in K$ (see \cite[Theorem 7.1]{Cos}). Now, let us consider a continuous non-zero function $g: K\rightarrow \mathbb{C}$. It is clear that there exists some $q\in K$ with $g(q)\neq 0$. If $g\in \mathfrak{L}(\omega)$ for some sequence $\omega$, then
\begin{eqnarray*}
    \Vert{M_{\phi}^{\omega_{n}}g-g\Vert}_{\infty}=\sup_{p\in K}{\vert{\beta^{\omega_{n}}g(p)-g(p)\vert}}\geq \vert{\beta^{\omega_{n}}-1\vert}\vert{g(q)\vert}\xrightarrow{n \rightarrow \infty}0
\end{eqnarray*}
Consequently, we can conclude that $\beta^{\omega_{n}}$ converges to $1$ as $n\rightarrow \infty$, which implies that $\mathfrak{L}(\omega)=C(K)$.
In particular, $\mathrm{Rec}(M_{\phi})=C(K)$ and $\mathrm{Hr}(M_{\phi})=C(K)\setminus\{0\}$.
\end{example}

\begin{example}
If $K$ is a compact Hausdorff space, then $C(K)$ has a dense invertible group if and only if $dim(K)$, the covering dimension of $K$, is not more than 1 (see  \cite{Dawson}). Under these conditions, for $\phi\in C(K)$, if the multiplication operator $M_{\phi}$ on $C(K)$ is recurrent, then $\mathrm{Hr}(M_{\phi})$ is dense in $C(K)$ by Theorem \ref{multi}.
\end{example}

Now we turn our attention to the multiplication operator on a Banach space of holomorphic functions defined on a non-empty open subset of $\mathbb{C}$. In the work of Costakis et al. \cite[Subsection 7.4]{Cos}, they analyzed the recurrence of the multiplication operator on Banach spaces of holomorphic functions on the unit disk $\mathbb{D}$. More precisely, they showed in the context of the Hardy spaces $H^{p}(\mathbb{D})$ and Bergman spaces $A^{p}(\mathbb{D})$ for $1\leq p<\infty$, the Bloch space $\mathcal{B}$, and the Dirichlet space $\mathcal{D}$ that the multiplication operator is recurrent if and only if $\phi$ is a constant in $\mathbb{T}$. The proof in there makes implicitly use of the following notion.

\begin{definition}
    Let $X$ be a nontrivial Banach space of holomorphic functions in a given open set $\Omega\subset \mathbb{C}$. We say that each point-evaluation functional is bounded if, for each $z\in \Omega$, there exists a constant $C(z)>0$ such that
        $\vert{f(z)\vert}\leq C(z)\Vert{f\Vert},\;\;\forall f\in X$.
\end{definition}
\begin{example} \label{Example:Hardy-space}
For $1 \leq p<\infty$, the Hardy space $H^{p}(\mathbb{D})$ consists of all holomorphic functions $f$ on $\mathbb{D}$ such that
\begin{eqnarray*}
\Vert{f\Vert}_{H^{p}(\mathbb{D})}:=\sup_{0\leq r<1}\left(\frac{1}{2\pi}\int_{0}^{2\pi} \vert{f(re^{i\theta})\vert}^{p}d\theta\right)^{1/p}<\infty.
\end{eqnarray*}
With respect to this norm, $H^{p}(\mathbb{D})$ becomes a Banach space. Moreover, for each $f\in H^{p}(\mathbb{D})$, we have for each $ z\in \mathbb{D}$ that
\begin{eqnarray*}
\vert{f(z)\vert}\leq\tfrac{\Vert{f\Vert}_{H^{p}(\mathbb{D})}}{(1-\vert{z\vert}^{2})^{1/p}}.
\end{eqnarray*}
For the other spaces, namely the Bergman, Bloch and Dirichlet spaces, the growth estimate can be found in \cite{Ale}. For some $\phi\in X$, consider the multiplication operator $M_{\phi}:X \rightarrow X$. If $M_{\phi}$ is recurrent then, there exists $\beta\in \mathbb{T}$ such that $\phi(z)=\beta$ for each $z\in \mathbb{D}$ by \cite[Theorem 7.4 and 7.5]{Cos}. It is not very hard to show that $M_{\phi}$ is hyper-recurrent.
\end{example}

The ideas presented in the above example motivated us to extend certain concepts from its proof to Banach spaces of holomorphic functions defined on a general  non-empty open set. The following result both extends and generalizes Theorems 7.5 and 7.6 in \cite{Cos} in two ways.
First of all, we consider abstract spaces of holomorphic functions defined on open sets with bounded point-evaluation functional instead of the Hardy, Bergman, Bloch and Dirichlet spaces.  Moreover, we add hyper-recurrence to the list of equivalences.

\begin{theorem}\label{Banach-holo}
Let $\Omega\subset \mathbb{C}$ be an open set and $\{A_{i}\}_{i\in I}$ its connected components.
Furthermore, assume that  $(X,\Vert \cdot {\Vert})$ is a non-trivial Banach space of holomorphic functions on $\Omega$ such that  $1\in X$, each point-evaluation functional is bounded and, for every $g\in X$,  we have $\{g \chi_{A_i} \}_{i}\subset X$ and $\sum_{i \in I}\Vert{ g \chi_{A_i} \Vert}<\infty$.
Then, for a holomorphic function $\phi$ on $\Omega$ such that the multiplication operator $M_{\phi}: X\rightarrow X$ is well-defined, the following are equivalent.
\begin{enumerate}
    \item  $M_{\phi}$ is recurrent,
    \item $M_{\phi}$ is rigid,
    \item  $\phi|_{A_i}$ is constant for all $i\in I$ and $\vert{\phi(z)\vert}=1$ for all $z\in \Omega$,
    \item $M_{\phi}$ is hyper-recurrent,
    \item $\hbox{Hr}(M_{\phi})$ is dense in $X$.
\end{enumerate}
\end{theorem}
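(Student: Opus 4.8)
The plan is to establish the cycle of implications $(1)\Rightarrow(3)\Rightarrow(4)\Rightarrow(5)\Rightarrow(1)$, with $(2)$ slotted in between $(3)$ and $(4)$ so that all five are linked. First I would handle $(1)\Rightarrow(3)$: this is essentially the argument already used by Costakis et al.\ for the Hardy/Bergman/Bloch/Dirichlet spaces, adapted to the abstract setting. Suppose $M_\phi$ is recurrent with recurrent vector $f$ and $T^{\omega_n}f=\phi^{\omega_n}f\to f$. Evaluating at a point $z$ where $f(z)\neq 0$ (possible since $f\neq 0$ and point-evaluations are bounded) forces $\phi(z)^{\omega_n}\to 1$, hence $|\phi(z)|=1$ at every such $z$; since the zero set of $f$ is discrete in each component, $|\phi|\equiv 1$ on all of $\Omega$ by continuity. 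Then $\phi$ maps each connected component $A_i$ into $\mathbb T$, and an open map into $\mathbb T$ (or: a holomorphic map with values in the circle) must be constant on each $A_i$. This gives $(3)$.

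Next, $(3)\Rightarrow(2)\Rightarrow(4)$. Assume $\phi|_{A_i}\equiv\beta_i\in\mathbb T$ for each $i$. Then for $g\in X$, writing $g=\sum_i g\chi_{A_i}$ (this decomposition lives in $X$ and is summable by hypothesis), we have $M_\phi^n g=\sum_i \beta_i^n\, g\chi_{A_i}$, so $M_\phi$ is determined by the rotation numbers $(\beta_i)_i\subset\mathbb T$. Since $\{\beta_i\}$ is a countable (at most) subset of $\mathbb T$, simultaneous recurrence of all the $\beta_i$ to $1$ holds: there is a strictly increasing sequence $(\omega_n)$ with $\beta_i^{\omega_n}\to 1$ for every $i$ (this is the standard pigeonhole/diagonal argument on $\mathbb T$, exactly as in the proof of Theorem~\ref{RnH} or Proposition~\ref{ortho}), and along such $(\omega_n)$, $\|M_\phi^{\omega_n}g-g\|\le \sum_i |\beta_i^{\omega_n}-1|\,\|g\chi_{A_i}\|\to 0$ by dominated convergence against the summable majorant $\sum_i\|g\chi_{A_i}\|$. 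Hence $\mathfrak L(\omega)=X$ and $M_\phi$ is rigid, so in particular recurrent, and every nonzero $g$ is a recurrent vector. For hyper-recurrence, fix any $g\neq 0$ (e.g.\ $g=1$, which lies in $X$) and suppose $g\in\mathfrak L(\theta)$. Evaluating as before at a point of each component where $g$ is nonzero yields $\beta_i^{\theta_n}\to 1$ for every $i$, and then the same dominated-convergence estimate gives $\mathfrak L(\theta)=X$. Thus $g\in\mathrm{Hr}(M_\phi)$ and $(4)$ holds.

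Then $(4)\Rightarrow(5)$: once we know $M_\phi$ is hyper-recurrent, the argument in the previous paragraph in fact shows \emph{every} nonzero $g\in X$ with $g$ not vanishing identically on any component is hyper-recurrent, and by multiplying by a suitable nowhere-vanishing holomorphic factor (or simply noting that for $g\in\mathfrak L(\omega)$ hyper-recurrent forces $\beta_i^{\omega_n}\to1$ whenever $g$ is nonzero somewhere on $A_i$) one checks that $\mathrm{Hr}(M_\phi)\supset\{g\in X: g|_{A_i}\not\equiv 0\ \forall i\}$, which is dense in $X$; alternatively, one cites Theorem~\ref{theo:factor} together with density of $\mathrm{Hr}$. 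Finally $(5)\Rightarrow(1)$ is trivial since a dense set of hyper-recurrent (hence recurrent) vectors makes $M_\phi$ recurrent. The main obstacle is the passage from "$|\phi|=1$ pointwise" to "$\phi$ constant on each component": in the classical case this used the maximum principle applied to $\phi$ and $1/\phi$ on the bounded domain $\mathbb D$, and for a general open $\Omega$ one instead observes directly that a holomorphic map $A_i\to\mathbb T$ has image in a real $1$-manifold and so must be locally, hence globally, constant on the connected set $A_i$ — one should be slightly careful here that $X$ being nontrivial and containing $1$ plus the $\chi_{A_i}$-summability hypothesis are exactly what make this decomposition-and-estimate machinery go through, and that the countability of $\{\beta_i\}$ needed for simultaneous torus recurrence follows from $I$ being countable, which in turn follows from separability of $\Omega$ (each component is open and nonempty).
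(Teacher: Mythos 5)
Your proposal follows essentially the same route as the paper's proof: (1)$\Rightarrow$(3) via bounded point evaluations and the open mapping theorem for holomorphic functions, (3)$\Rightarrow$(2) via simultaneous recurrence of the (at most countably many) rotation numbers $\beta_i$ on the torus combined with the summability hypothesis $\sum_{i}\Vert g\chi_{A_i}\Vert<\infty$, and (3)$\Rightarrow$(4),(5) by showing that every $g$ with $g|_{A_i}\not\equiv 0$ for all $i$ is hyper-recurrent. The only point to patch is at the start of (1)$\Rightarrow$(3): a single nonzero recurrent vector $f$ may vanish identically on some component $A_i$ (so its zero set there is not discrete and gives no information about $\phi|_{A_i}$); as in the paper, one should use the density of $\mathrm{Rec}(M_\phi)$ to choose, for each $i$, a recurrent vector whose restriction to $A_i$ is not identically zero, and then run your evaluation argument componentwise.
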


\begin{proof} Throughout the proof, we will write $f_i := g \chi_{A_i}$ and assume that $I \subset \mathbb{N}$ in order to have an order at hand.

We now show that (1) implies (3). So assume that $M_{\phi}$ is recurrent. Then, for each  $i\in I$, there exists an $f\in \mathrm{Rec}(M_{\phi})$ such that $f_{i}$ is not identically zero. Consequently, we have $(M_{\phi}^{a_{n}}f)_{i}\xrightarrow[n\rightarrow\infty]{} f_{i}$ for some $\omega=(a_{n})_{n}\in \mathfrak{C}$. By boundedness of the point-evaluation functional, there exists for each $z\in A_{i}$ with $f_{i}(z)\neq 0$ a positive constant $c(z)$ such that
\begin{eqnarray*}
\vert{f_{i}(z)}\vert\vert{\phi_{i}(z)^{a_{n}}-1\vert}\leq c(z)\Vert{(M_{\phi}^{a_{n}}f)_{i}-f_{i}\Vert}\xrightarrow{n\rightarrow\infty} 0.
\end{eqnarray*}
Hence,  $\phi_{i}(z)^{a_{n}}\xrightarrow{n\rightarrow \infty} 1$ and $\vert{\phi_{i}(z)\vert}=1$ for each $z\in A_{i}$ with $f_{i}(z)\neq 0$. As $Z(f_{i})=\{z\in A_{i}: f_{i}(z)=0\}$ is countable, $\vert{\phi(z)\vert}=1$ for each $z\in A_{i}$. Therefore, by the open mapping theorem for holomorphic functions, there exists $b_{i}\in \mathbb{T}$ such that $\phi(z)=b_{i}$ for each $z\in A_{i}$. Consequently, (1) leads to (3)

\begin{claim}
If $\omega=(a_{n})_{n}$ satisfies $b_{i}^{a_{n}}\xrightarrow{n\rightarrow \infty} 1$ for each $i\in I$, then $\mathfrak{L}(\omega)=X$.
\end{claim}
\noindent For any $\epsilon>0$ and $g\in X$ choose $i_{0}\in I$ such that $\sum_{i>i_{0}}\Vert{g_{i}\Vert}<\epsilon/4$. Moreover, there is $n_{0}$ such that $\vert{b_{i}^{a_{n}}-1}\vert\leq \frac{\epsilon}{2i_{0}(1+\max_{i\leq i_{0}} \Vert{g_{i}\Vert})}$ for $1\leq i\leq i_{0}$ and $n> n_0$. Hence, for $n>n_{0}$,
\begin{eqnarray*}
\Vert{M_{\phi}^{a_{n}}g-g\Vert}\leq \sum_{i\in I}\vert{b_{i}^{a_{n}}-1\vert}\Vert{g_{i}\Vert}\leq \sum_{i\leq i_{0}} \vert{b_{i}^{\ell_{n}}-1\vert}\Vert{g_{i}\Vert}+2\sum_{i>i_{0}} \Vert{g_{i}\Vert}<\epsilon.
\end{eqnarray*}
This proves the claim.

Furthermore, as the rotation on any finite dimensional torus is recurrent, a simple diagonal argument shows that such a sequence $\omega$ exists. This proves that (3) implies (2). Now, we will show that (3) implies (4) and (5).

\begin{claim}
Any $h\in X$ with $h_{i}\neq 0$ for each $i\in I$ is hyper-recurrent.
\end{claim}
\noindent If  $h\in \mathfrak{L}(\omega)$ for some $\omega=(a_{n}) \in \mathfrak{C}$, then for each $i\in I$ we have $(M_{\phi}^{a_{n}}h)_{i}\xrightarrow{n\rightarrow \infty} h_{i}$. For $z\in A_{i}$ with $h_{i}(z)\neq 0$, there is a positive constant $c(z)>0$ such that
\begin{eqnarray*}
\vert{h_{i}(z)\vert}\vert{b_{i}^{a_{n}}-1\vert}=\vert{h_{i}(z)}\vert\vert{\phi_{i}(z)^{a_{n}}-1\vert}\leq c(z)\Vert{(M_{\phi}^{a_{n}}h)_{i}-h_{i}\Vert}\xrightarrow[n\rightarrow\infty]{}0.
\end{eqnarray*}
Hence $b_{i}^{a_{n}}\xrightarrow[n\rightarrow \infty]{}1$ for each $i\in I$. By the above claim, we have $\mathfrak{L}(\omega)=X$.

The remaining implications are trivial.
\end{proof}

\begin{example}
For each $k\in \mathbb{N}$, let $D_{k}:=\{z\in \mathbb{C}: \vert{z\vert}<k\}$. Define $A(D_{k})$, the disk algebra on the disk $D_{k}$, as
\begin{eqnarray*}
    A(D_{k}):=\{f:\overline{D_{k}}\rightarrow \mathbb{C}: f\; \text{is continuous on}\; \overline{D_{k}}\; \text{and holomorphic on}\; D_{k}\}
\end{eqnarray*}
equipped with the norm $\Vert{\cdot\Vert}_{k}$ given by $\Vert{f\Vert}_{k}:=\sup_{z\in D_{k}}\vert{f(z)\vert}$ for $f\in A(D_{k})$. Equipped with pointwise multiplication, $(A(D_{k}), \Vert{\cdot\Vert}_{k})$ forms a unital Banach algebra. For $\phi\in A(D_{k})$, let $M_{\phi}$ be the multiplication operator on $A(D_{k})$. We can use two approaches to show that if $M_{\phi}$ is recurrent, then it is hyper-recurrent. The first case is by observing that $A(D_{k})$ is a unital Banach algebra, and then $M_{\phi}$ is hyper-recurrent by Theorem \ref{multi}. The second case is obtained by observing that $(A(D_{k}), M_{\phi})$ satisfies the conditions of the Theorem \ref{Banach-holo}. Therefore, $\phi\equiv\beta$ for some $\beta\in \mathbb{T}$. With a similar argument as in Example \ref{mul-conexo}, we can conclude that $\mathrm{Hr}(M_{\phi})=A(D_{k})\setminus\{0\}$.
\end{example}

To conclude this subsection, we will examine multiplication operators on $L^p$-spaces. Costakis et al. characterized the recurrence of the multiplication operator on $L^p$-spaces of finite measure for  $1 \leq p < \infty$ through a certain behavior of the function $\phi\in L^{\infty}$ (see Theorem 7.6 in \cite{Cos}).
In here, we now extend the result to spaces equiped with  a  $\sigma$-finite measure and again add hyper-recurrence to the list of equivalences.

\begin{theorem} \label{theo:mult}
Let $(Y, \nu)$ be a measure space where $\nu$ is a non-negative $\sigma$-finite measure. For $\phi\in L^{\infty}(Y, \nu)$ consider the multiplication operator $M_{\phi}: L^{p}(Y,\nu)\rightarrow L^{p}(Y, \nu)$ for some $1 \leq p<\infty$. The following are equivalent.
\begin{enumerate}
    \item \label{no:theo:mult:1} $M_{\phi}$ is recurrent,
    \item \label{no:theo:mult:2} $M_{\phi}$ is rigid,
    \item \label{no:theo:mult:3} there exists a strictly increasing sequence $(a_{n})_{n\in \mathbb{N}}$ such that $\lim_{n \to \infty} \phi^{a_{n}} =1$ almost surely,
    \item \label{no:theo:mult:4} $M_{\phi}$ is hyper-recurrent,
    \item \label{no:theo:mult:5} $\hbox{Hr}(M_{\phi})$ is dense in $L^{p}(Y, \nu)$.
\end{enumerate}
\end{theorem}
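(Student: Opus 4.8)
The plan is to establish the cycle $(\ref{no:theo:mult:2}) \Rightarrow (\ref{no:theo:mult:1}) \Rightarrow (\ref{no:theo:mult:3}) \Rightarrow (\ref{no:theo:mult:4}) \Rightarrow (\ref{no:theo:mult:5}) \Rightarrow (\ref{no:theo:mult:1})$, with the substantive work lying in the implications $(\ref{no:theo:mult:1}) \Rightarrow (\ref{no:theo:mult:3})$ and $(\ref{no:theo:mult:3}) \Rightarrow (\ref{no:theo:mult:4})$; the remaining links are either trivial or follow the same pattern as in Theorems \ref{multi} and \ref{Banach-holo}. Since $|\phi| \leq \|\phi\|_\infty$ and recurrence forces $\lim_n \|M_\phi^{a_n} f - f\|_p = 0$ along some sequence, a first observation (needed for $(\ref{no:theo:mult:1}) \Rightarrow (\ref{no:theo:mult:3})$) is that $M_\phi$ recurrent forces $|\phi| = 1$ almost surely: if $|\phi| > 1$ on a set of positive measure one passes to a sub-level set of positive, finite measure and uses $\sigma$-finiteness to find a recurrent $f$ not vanishing there, contradicting $\|\phi^{a_n} f\|_p \to \|f\|_p$; the case $|\phi|<1$ is symmetric (or handled by noting $M_\phi$ must be bounded below on recurrent vectors).

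For $(\ref{no:theo:mult:1}) \Rightarrow (\ref{no:theo:mult:3})$, I would exploit $\sigma$-finiteness to write $Y = \bigcup_j Y_j$ with $Y_j$ increasing and $\nu(Y_j) < \infty$, and fix a recurrent vector $f$ whose support has full measure (such an $f$ exists by taking a countable sum of suitably scaled recurrent vectors supported on the $Y_j$, using that $\mathrm{Rec}(M_\phi)$ is dense and that $M_\phi$ restricted to an invariant band is again recurrent — here one mimics the $\ell_1$-type construction in the proof of Theorem \ref{theo:factor}). Then from $\|\phi^{a_n} f - f\|_p \to 0$ and $|f| > 0$ a.e.\ one extracts, along a subsequence, pointwise a.e.\ convergence $\phi^{a_n}(y) \to 1$; relabelling gives the strictly increasing sequence in (\ref{no:theo:mult:3}).

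For $(\ref{no:theo:mult:3}) \Rightarrow (\ref{no:theo:mult:4})$ — and simultaneously $(\ref{no:theo:mult:5})$ — the key point is to produce an explicit hyper-recurrent vector. Given $(a_n)$ with $\phi^{a_n} \to 1$ a.e., let $g$ be any function with $|g| > 0$ a.e.\ which is in $L^p$; concretely, on each $Y_j \setminus Y_{j-1}$ set $g$ equal to a small constant $c_j$ chosen so that $\sum_j |c_j|^p \nu(Y_j \setminus Y_{j-1}) < \infty$, so that $g$ is also bounded below on each $Y_j$. Dominated convergence (dominated by $2^p|g|^p$) gives $g \in \mathfrak{L}(\omega)$ for $\omega = (a_n)$, so $M_\phi$ is recurrent. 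Now suppose $g \in \mathfrak{L}(\theta)$ for some $\theta \in \mathfrak{C}$: then $\int |\phi^{\theta_n} - 1|^p |g|^p \, d\nu \to 0$, and since $|g|$ is bounded below on each finite-measure piece $Y_j$, we get $\int_{Y_j} |\phi^{\theta_n} - 1|^p \, d\nu \to 0$ for every $j$, hence (Fatou / a diagonal subsequence argument, then back to the stationarity-free statement since any subsequence has a further subsequence converging a.e.) $\phi^{\theta_n} \to 1$ a.e.\ — more precisely $\mathbf 1 \in \mathfrak{L}(M_\phi)(\theta)$ in the $L^p$ sense by dominated convergence, and then for any $h \in L^p$, $\|\phi^{\theta_n} h - h\|_p \to 0$ by splitting $h$ into its restrictions to the $Y_j$ and a tail. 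Thus $\overline{\mathfrak{L}(\theta)} = X$, so $g \in \mathrm{Hr}(M_\phi)$. Varying $g$ over such functions (and approximating an arbitrary element of $L^p$ by modifying it to be nonzero a.e.\ with a small $L^p$ perturbation) shows $\mathrm{Hr}(M_\phi)$ is dense, giving (\ref{no:theo:mult:5}); and $(\ref{no:theo:mult:4}) \Rightarrow (\ref{no:theo:mult:5})$ already comes out of this, while $(\ref{no:theo:mult:5}) \Rightarrow (\ref{no:theo:mult:1})$ and $(\ref{no:theo:mult:2}) \Rightarrow (\ref{no:theo:mult:1})$ are immediate.

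The main obstacle is the $\sigma$-finite (rather than finite) measure case: without a uniform lower bound on $|g|$ one cannot directly transfer $L^p$-convergence of $\phi^{a_n} g$ to a.e.\ convergence of $\phi^{a_n}$, so the careful choice of a reference vector $g$ that is bounded below on each piece of an exhausting sequence, together with the tail estimate $\sum_{j > j_0}\|g\chi_{Y_j \setminus Y_{j-1}}\|_p < \epsilon$, is what replaces the finite-measure argument of \cite[Theorem 7.6]{Cos}; this mirrors exactly the role of the hypothesis $\sum_i \|g\chi_{A_i}\| < \infty$ in Theorem \ref{Banach-holo}.
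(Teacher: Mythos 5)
Your proposal is correct and follows essentially the same route as the paper: first show that recurrence forces $\vert\phi\vert=1$ a.e.\ by testing on a finite-measure set where $\vert\phi\vert$ is bounded away from $1$, then use a reference vector bounded below on each piece of a finite-measure partition of $Y$ to pass from $L^{p}$-recurrence to $\phi^{a_n}\to 1$ a.e., deduce rigidity by dominated convergence, and observe that every such reference vector is hyper-recurrent and that these are dense. The one step I would not accept as written is the construction of a full-support recurrent vector for $(\ref{no:theo:mult:1})\Rightarrow(\ref{no:theo:mult:3})$ as ``a countable sum of suitably scaled recurrent vectors supported on the $Y_j$'': a recurrent vector supported on $Y_j$ need not have full support there, and a sum of recurrent vectors need not be recurrent since the associated return sequences may be incompatible; but this detour is unnecessary, because once $\vert\phi\vert=1$ a.e.\ the operator $M_\phi$ is an isometry, hence power-bounded, and Proposition \ref{esp} gives $\mathrm{Rec}(M_\phi)=L^{p}(Y,\nu)$, so the explicit $g$ you build in the second half (constant $c_j$ on $Y_j\setminus Y_{j-1}$, hence bounded below on each finite-measure piece) is automatically recurrent and serves directly as the test vector — which is exactly what the paper does.
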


\begin{proof} If $\mu(X)=\infty$, then there exists a partition of $X$ into mensurable sets $\{A_{j}\}_{j\in \mathbb{N}}$ with $0<\nu(A_{j})<\infty$ for each $j\in \mathbb{N}$.
We now show that, if $M_{\phi}$ is recurrent, then $\vert{\phi\vert}=1$ almost surely. In order to do so, for $m, j\in \mathbb{N}$, set  $A_{j,m}:=\{y\in A_{j}:\vert\phi(y)\vert>1+\frac{1}{m}\}$. If $\nu(A_{j,m})>0$ for some a positive integer $m$, then there is $f\in \hbox{Rec}(M_{\phi}))$ such that $\int_{A_{j,m}}\vert{f\vert}^{p}d\nu=h>0$. We can see that for each $\ell\in \mathbb{N}$:
\begin{eqnarray*}
\Vert{M^{\ell}_{\phi}f-f\Vert}^{p}_{L^{p}(Y,\nu)}\geq \int_{E_{j,m}}\vert{(\phi(y))^{\ell}-1\vert}^{p}\vert{f(y)\vert}^{p}d\nu(y)\geq\left(\frac{\ell}{m}\right)^{p}h,
\end{eqnarray*}
which is a contradiction since $f$ is recurrent. Thus $\nu( \bigcup_{j,m} A_{j,m})=0$. A similar argument shows that $\nu(\{y\in Y: \vert{\phi(y)\vert}<1\})=0$.

In particular, the multiplication operator $M_{\phi}$ is a isometry. Therefore, $\hbox{Rec}(M_{\phi})=L^{p}(Y, \nu)$. Now assume that $g \in
L^{p}(Y, \nu)$ satisfies $c_i := \inf_{y \in A_i} |g(y)| > 0$ for all $i \in \mathbb{N}$. Then, for $\omega = (a_n) \in \mathfrak{C}$ with $g\in \mathfrak{L}(\omega)$, we have
\begin{eqnarray*}
\Vert{M_{\phi}^{a_{n}}g-g\Vert}^{p}_{L^{p}(Y,\nu)}=\sum_{j=1}^{\infty} \int_{E_{j}}\vert{\phi(y)^{a_{n}}-1\vert}^{p}\vert{g(y)\vert}^{p} d\nu \geq \sum_{j=1}^{\infty}c_{j}^{p} \int_{E_{j}}\vert{\phi(y)^{a_{n}}-1\vert}^{p}d\nu\xrightarrow{n\rightarrow \infty} 0,
\end{eqnarray*}
which implies that $\lim_n \phi(y)^{a_{n}} =1$ for $\nu$-almost everywhere $y$. This proves that \ref{no:theo:mult:1} implies \ref{no:theo:mult:3}. The implication from \ref{no:theo:mult:3} to \ref{no:theo:mult:2} now is a consequence of Lebesgue's dominated convergence theorem. The remaining statements follow from the observation that
$\inf_{y \in A_i} |g(y)| > 0$ for all $i \in \mathbb{N}$ implies that $g$ is hyper-recurrent.
\end{proof}

\subsection{Operators with discrete spectrum}\label{subsec:ods}

We now study the hyper-recurrence of operators with the property that the vector space generated by the set $\mathcal{E}(T):=\{x\in X: Tx=\lambda x\;\hbox{for some}\; \lambda\in \mathbb{T}\}$ is dense in $X$. Moreover, recall that $T$ is said to have \emph{discrete spectrum} if $\overline{\mathrm{span}(\mathcal{E}(T))}=X$.

\begin{proposition} Let $T$ be a linear and continuous operator with discrete spectrum on a complex, separable Fréchet space $X$.
Then $T$ is quasi-rigid. In particular, if $T$ is power-bounded then $T$ is rigid.
\end{proposition}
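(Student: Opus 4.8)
The plan is to produce a single sequence $\omega\in\mathfrak{C}$ along which $T^{\omega_{n}}$ converges to the identity on a dense linear subspace; by Definition \ref{def:quasi-rigid} this is exactly quasi-rigidity, once we observe that such an $\omega$ forces $T$ to be recurrent.

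First I would use separability to pass to a countable family of unimodular eigenvectors. Assuming $X\neq\{0\}$, the subspace $\mathrm{span}(\mathcal{E}(T))$ is dense, hence separable, so it admits a countable dense subset $D$. Expanding each element of $D$ as a finite linear combination of elements of $\mathcal{E}(T)$ and collecting all eigenvectors that occur yields a countable set $\{v_{j}:j\in\mathbb{N}\}\subset\mathcal{E}(T)$, say with $Tv_{j}=\lambda_{j}v_{j}$ and $|\lambda_{j}|=1$, whose span contains $D$ and therefore satisfies $\overline{\mathrm{span}\{v_{j}:j\in\mathbb{N}\}}=X$.

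Second, I would construct a strictly increasing sequence $\omega=(\omega_{n})$ with $\lim_{n}\lambda_{j}^{\omega_{n}}=1$ for every $j$. This is the only point requiring care, and it is handled exactly as in the proof of Theorem \ref{Banach-holo}: for each $k\in\mathbb{N}$ and each $\varepsilon>0$, the set $\{m\in\mathbb{N}:|\lambda_{j}^{m}-1|<\varepsilon\text{ for }1\le j\le k\}$ is infinite, since the rotation by $(\lambda_{1},\dots,\lambda_{k})$ on the torus $\mathbb{T}^{k}$ is a (power-bounded, hence recurrent) isometry for which $(1,\dots,1)$ is a recurrent point. A diagonal selection over $k$, taking $\varepsilon=1/n$ at the $n$-th step and always choosing $\omega_{n}>\omega_{n-1}$, produces the desired $\omega$.

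Third, I would conclude. For each $j$ we have $T^{\omega_{n}}v_{j}=\lambda_{j}^{\omega_{n}}v_{j}\to v_{j}$, so $v_{j}\in\mathfrak{L}(\omega)$; since $\mathfrak{L}(\omega)$ is a linear subspace (convergence in a Fréchet space is compatible with the vector operations), $\mathrm{span}\{v_{j}:j\in\mathbb{N}\}\subset\mathfrak{L}(\omega)$ and thus $\overline{\mathfrak{L}(\omega)}=X$. In particular $\omega\in\mathfrak{C}$ (any nonzero $v_{j}$ is recurrent along $\omega$), and $\mathrm{Rec}(T)\supseteq\mathfrak{L}(\omega)$ is dense, so $T$ is recurrent; by Definition \ref{def:quasi-rigid} it is quasi-rigid. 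Finally, if $T$ is power-bounded, Proposition \ref{esp} asserts that $\mathfrak{L}(\omega)$ is closed, whence $\mathfrak{L}(\omega)=\overline{\mathfrak{L}(\omega)}=X$ and $T$ is rigid. The main (and essentially only) obstacle is the simultaneous Diophantine approximation $\lambda_{j}^{\omega_{n}}\to1$, which the torus-recurrence/diagonal argument above resolves.
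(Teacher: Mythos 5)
Your proof is correct, but it takes a different (and more self-contained) route than the paper. The paper reduces the statement to Theorem \ref{equiv}: it observes that for every $m$ the set $(\mathrm{span}(\mathcal{E}(T)))^m$ is dense in $X^m$ and consists of recurrent vectors for $T_m$ (any $m$-tuple of finite combinations of unimodular eigenvectors involves only finitely many eigenvalues, which return simultaneously to $1$ by torus recurrence), so every finite product $T_m$ is recurrent and Theorem \ref{equiv} — whose proof rests on Mycielski's theorem — yields quasi-rigidity. You instead bypass Theorem \ref{equiv} entirely: separability lets you extract a countable family of eigenvectors with dense span, and a diagonal selection over the simultaneous-approximation sets $\{m:|\lambda_j^m-1|<1/n,\ j\le n\}$ produces one explicit sequence $\omega$ with $\mathrm{span}\{v_j\}\subset\mathfrak{L}(\omega)$. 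In effect you inline the diagonal step from the proof of Theorem \ref{equiv}, which is possible here because the dense set with the simultaneous recurrence property is available explicitly rather than having to be produced by Mycielski's theorem. Your argument is more elementary and exhibits the witnessing sequence concretely; the paper's is shorter given that Theorem \ref{equiv} is already in place. Both handle the power-bounded case identically via Proposition \ref{esp} (closedness of $\mathfrak{L}(\omega)$), and your added remarks — that $\omega\in\mathfrak{C}$ and that $T$ is recurrent because $\mathfrak{L}(\omega)$ is dense — correctly discharge the recurrence hypothesis built into Definition \ref{def:quasi-rigid}.
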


\begin{proof}
For any finite set $\{\lambda_{j}\}_{j\in F}\subset \mathbb{T}$, there exists a strictly increasing sequence of positive integers $(a_{n})_{n\in \mathbb{N}}$ such that for each $j\in F$, the sequence $\lambda_{j}^{a_{n}}$ converges to $1$ as $n\rightarrow \infty$. It is easy to see that for every $m\in \mathbb{N}$, the set of all recurrent vectors of $T_m$ contains the dense set $(\mathrm{span}(\mathcal{E}(T)))^m$. By Theorem \ref{equiv}, there exists a sequence $\omega\in\mathfrak{C}$ such that $\overline{\mathfrak{L}(\omega)}=X$. The remaining assertion follows from Proposition \ref{esp}.
\end{proof}

When studying aspects of recurrence, one can approach it by considering stronger notions of recurrence, like uniform, reiterativ, frequent or upper frequent recurrence, as well as recurrence along certain Furstenberg families (see, e.g., \cite{muro, Boni, Fhyp, Grivaux, Sophie}). In here, we only will work with the following notion which is related to  $\mathcal{E}(T)$ by a result of Grivaux and López-Martínez in \cite{Grivaux}.
\begin{definition}
 Let $X$ be a Banach space and let $T: X \rightarrow X$ be a continuous operator. We say that $x\in X$ is uniformly recurrent for $T$ if for any non-empty open set $U$   of $x$, the set $N(x, U)$ is a syndetic set. We will denote by $\mathrm{URec}(T)$ the set of such points and refer to $T$ as uniformly recurrent if $\mathrm{URec}(T)$ is dense in $X$.
\end{definition}

Grivaux and López-Martínez obtained that $\overline{\hbox{span}(\mathcal{E}(T))}=\overline{\mathrm{URec}(T)}$ for power-bounded operators on complex reflexive Banach spaces (\cite[Th. 1.9]{Grivaux}). Moreover, it follows from the work of  Bonilla and Grosse-Erdmann (\cite[Corollary 3.2]{Boni}) that  either $\mathrm{URec}(T)$ is of first category or $\mathrm{URec}(T) =X$, where $T$ is a uniformly recurrent operator on a Banach space $X$.
However, it is not known whether uniformly recurrence on a complex Banach spaces always implies that $\hbox{span}(\mathcal{E}(T))$ is dense in $X$ (\cite[Question 1.8]{Grivaux}).

We now present two results in which the space generated by $\mathcal{E}(T)$ is dense. The first result ensures that if the operator $T$ is invertible, then $T$ is hyper-recurrent. The second result states that if the set of periodic points is dense, then the operator is hyper-recurrent, noting that it is not required for the operator to be invertible.

\begin{theorem}\label{disc-esp}
Let $T$ be a complex separable Fréchet space and $T\in \mathcal{L}(X)$ be an invertible operator with discrete spectrum (i.e. $
\overline{\mathrm{span}(\mathcal{E}(T))}=X$). Then  $T$ is hyper-recurrent and $\overline{\mathrm{Hr}(T)} =X$.
\end{theorem}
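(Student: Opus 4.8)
\emph{Plan.} The idea is to exhibit an explicit dense set of hyper-recurrent vectors, built as infinite combinations of eigenvectors, and to detect the individual eigenvalues along a recurrence sequence by means of Cesàro (spectral) averages. First I would do the bookkeeping. Since $\mathrm{span}(\mathcal{E}(T))$ is dense and $X$ is separable, a countable dense subset of it involves only countably many eigenvalues, so there is an at most countable set of \emph{distinct} unimodular eigenvalues $\{\mu_j\}_j$ with $\overline{\mathrm{span}\bigcup_j X_{\mu_j}}=X$, where $X_{\mu_j}:=\ker(T-\mu_j I)$. Replacing the metric of $X$ by the equivalent, translation-invariant and \emph{rotationally invariant} metric $d(x,y)=\sum_n 2^{-n}\tfrac{p_n(x-y)}{1+p_n(x-y)}$ coming from an increasing sequence of seminorms, I would consider the vectors $z=\sum_j w_j$ with $w_j\in X_{\mu_j}\setminus\{0\}$ and $\sum_j d(w_j,0)<\infty$. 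Such $z$ are dense in $X$ (truncate, then add tiny components in the missing eigenspaces), and each is recurrent: the closure of $\{(\mu_j^m)_j : m\geq 1\}$ in $\mathbb{T}^{\mathbb{N}}$ is a compact monothetic group, so there is $\theta\uparrow\infty$ with $\mu_j^{\theta_n}\to 1$ for all $j$, whence $T^{\theta_n}z\to z$.

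The key tool is the continuity of $G:\mathbb{T}^{\mathbb{N}}\to X$, $G(\xi)=\sum_j \xi_j w_j$, which is well defined and continuous because the series converges uniformly in $\xi$ (here rotational invariance of $d$ is essential). Thus $G(\mathbb{T}^{\mathbb{N}})$ is compact and contains the whole orbit $\{T^m z=G((\mu_j^m)_j)\}$, and for any $\mu\in\mathbb{T}$ the averages $A^\mu_N z:=\tfrac1N\sum_{m=0}^{N-1}\overline{\mu}^{\,m}T^m z=\sum_j\bigl(\tfrac1N\sum_{m<N}(\overline{\mu}\mu_j)^m\bigr)w_j$ converge, by a geometric-series computation together with the distinctness of the $\mu_j$, to $w_{j_0}$ if $\mu=\mu_{j_0}$ and to $0$ otherwise; the same computation gives $A^{\mu_{j_0}}_N y\to\nu_{j_0}w_{j_0}$ whenever $y=\sum_j\nu_j w_j$ with $\nu\in\mathbb{T}^{\mathbb{N}}$. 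No general mean ergodic theorem is needed, since the averages are evaluated directly on these special vectors.

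Now I would conclude as follows. Suppose $z\in\mathfrak{L}(\omega)$ and, towards a contradiction, $\mu_{j_0}^{\omega_n}\not\to 1$ for some $j_0$. By compactness of $\mathbb{T}^{\mathbb{N}}$ pass to a subsequence with $\mu_j^{\omega_{n_\ell}}\to\nu_j$ for all $j$ and $\nu_{j_0}\neq 1$. Then $T^{\omega_{n_\ell}}z=G((\mu_j^{\omega_{n_\ell}})_j)\to G((\nu_j)_j)=\sum_j\nu_j w_j$, while $T^{\omega_{n_\ell}}z\to z=\sum_j w_j$, so $\sum_j\nu_j w_j=z$. Applying $A^{\mu_{j_0}}_N$ to this identity and letting $N\to\infty$ yields $\nu_{j_0}w_{j_0}=w_{j_0}$, hence $\nu_{j_0}=1$, a contradiction. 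Therefore $\mu_j^{\omega_n}\to 1$ for every $j$, which forces $X_{\mu_j}\subset\mathfrak{L}(\omega)$ for every $j$; since $\mathfrak{L}(\omega)$ is a linear subspace by Proposition~\ref{esp}, $\overline{\mathfrak{L}(\omega)}\supset\overline{\mathrm{span}\bigcup_j X_{\mu_j}}=X$. Thus each such $z$ lies in $\mathrm{Hr}(T)$, and as these $z$ are dense, $\overline{\mathrm{Hr}(T)}=X$.

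The main obstacle is exactly the step of recovering the individual scalars $\nu_j$ from the single vector identity $\sum_j\nu_j w_j=\sum_j w_j$: without a biorthogonal system at hand, this is what the explicit spectral averages are for, and their convergence for a possibly non-power-bounded $T$ hinges on first arranging a rotationally invariant metric so that the defining series converges uniformly along orbits and under the compact group $\mathbb{T}^{\mathbb{N}}$. The reduction to countably many eigenvalues (separability) and the diagonal extraction are routine by comparison.
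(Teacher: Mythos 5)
Your proof is correct, and it reaches the conclusion by a genuinely different device at the crucial step. Both you and the paper reduce to a countable family of distinct unimodular eigenvalues $\{\mu_j\}$ whose eigenspaces span a dense subspace, both build the dense set of candidate hyper-recurrent vectors as convergent sums $z=\sum_j w_j$ with a nonzero component in every eigenspace, and both get recurrence of $z$ from simultaneous recurrence of the rotation on $\mathbb{T}^{\mathbb{N}}$. The difference is in how one recovers $\mu_{j}^{\omega_n}\to 1$ for each individual $j$ from $T^{\omega_n}z\to z$: the paper applies, for each eigenvector $\xi_i$, a continuous projection $\mathbb{P}_{\xi_i}$ onto $\mathbb{C}\xi_i$ whose kernel is a $T$-invariant topological complement containing all the other eigenspaces -- and it is precisely to make that complement $T$-invariant that the invertibility hypothesis is invoked. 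You instead stay inside the compact set $G(\mathbb{T}^{\mathbb{N}})$ and isolate the $j_0$-th component with the weighted Ces\`aro averages $\frac1N\sum_{m<N}\overline{\mu_{j_0}}^{\,m}T^m$, whose action on your special vectors is computable term by term (the interchange of limits being justified by $d(cx,0)\le d(2x,0)\le 2\,d(x,0)$ for $|c|\le 2$ and the summability of $\sum_j d(w_j,0)$). This buys you two things: you need no biorthogonal system or invariant complement, so the argument is self-contained; and, as far as I can see, you never use that $T$ is invertible, so your proof establishes the statement under the weaker hypothesis of discrete spectrum alone. The paper's route is shorter once the invariant complement is granted, but that granting is exactly the delicate point your averages circumvent. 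One small presentational point: you should state explicitly that $T^m$ passes through the infinite sum defining $z$ (continuity of $T^m$), and that the finite average over $m$ commutes with the sum over $j$; both are immediate but are the only places where the formal manipulation $A^\mu_N z=\sum_j c_{j,N}w_j$ needs justification.
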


\begin{proof}
As $X$ is a complex Fréchet space, the metric on $X$ can be defined using a countable family of seminorms $\{p_{k}\}_{k\in \mathbb{N}}$ associated with $X$. For $x, y\in X$, we have
\[
d(x,y)=\sum_{k=1}^{\infty} \frac{1}{2^{k}}\frac{p_{k}(x-y)}{1+p_{k}(x-y)}
\]
As a consequence, for $\lambda\in \mathbb{T}$ and $x\in X$, $d((\lambda-1)x,0)\leq d(2x,0)$.

{Now assume that $\alpha$ is an eigenvalue of the operator $T$. For any vector $q$ with $Tq=\alpha q$, set $A(q) := \mathbb{C}q$. $A(q)$ possesses a topological complement, denoted by $M(q)$. As $T$ is invertible, it follows that $M_{q}$ is $T$-invariant. Let $\mathbb{P}_{q}$ be a continuous projection operator onto $A(q)$. It is worth noting that for each eigenvalue $\beta\neq \alpha$ of the operator $T$, we have $\{y\in X: Ty=\beta y\}\subset \hbox{Ker}(\mathbb{P}_{q})$. On the other hand, by the separability of $X$, there exists a countable set of eigenvalues $G=\{\lambda_{i}\}_{i\in \mathbb{N}}\subset \mathbb{T}$ of the operator $T$ such that
\begin{align*}
\overline{\hbox{span}\{x\in X: Tx=\lambda x \;\hbox{for some}\; \lambda\in G\}}=X.
\end{align*}
\begin{claim}
If $\omega:=(\omega_{n})_{n}$ is a sequence such that $\lim_n \lambda^{\omega_{n}}=1$ for each $\lambda\in G$, then $\overline{\mathfrak{L}(\omega)}=X$.
\end{claim}
\noindent This follows immediately by noting that the span of vectors $x\in X$ such that $Tx=\lambda x$ for some $\lambda\in G$ is contained in $\mathfrak{L}(\omega)$.}

\medskip
{Now we will show that $\mathrm{Hr}(T)$ is dense in $X$. So assume that $y=\sum_{j=1}^{m}{q_{j}}$ for some  $q_j \in \mathcal{E}(T)$ and $\beta_{j} \in \mathbb{T}$ with  $Tq_{j} = \beta_{j} q_{j}$, for $1 \leq j \leq m$.
Moreover, assume that $0 < \epsilon  < \min |q_j|/m$ and choose  $\xi_i \neq 0$ with $T\xi_{i}=\lambda_{i}\xi_{i}$ for each $i\in \mathbb{N}$ such that $\sum_{i=1}^{\infty}d(\xi_{i},0)<\epsilon$.
\begin{claim}
    $z := y + \sum_{i=1}^{\infty} \xi_{i} $ is hyper-recurrent.
\end{claim}
For the countable set $G\cup \{\beta_{j}\}_{j}^{m}\subset \mathbb{T}$, there is a strictly increasing sequence of positive integers $\theta:=(\theta_{n})_{n}$ such that $\lambda_{i}^{\theta_{n}}$ and $\beta_{j}^{\theta_{n}}$ converge to 1 as $n\rightarrow \infty$ for each $i,j$. We will show that $z$ is recurrent. For any fixed $\delta>0$, there exists $i_{0}$ such that $\sum_{\ell>i_{0}}d(2\xi_{\ell},0)<\delta/2$. In addition, there exists $n_{0}$ such that for each $n>n_{0}$ we have $\sum_{j=1}^{m}d(T^{\theta_{n}}q_{j},q_{j})+\sum_{\ell=1}^{i_{0}}d(T^{\theta_{n}}\xi_{\ell},\xi_{\ell})<\delta/2$. Therefore, for  $n>n_{0}$,
\begin{align*}
	d(T^{\theta_{n}}z,z) &\leq \sum_{j=1}^{m}d(T^{\theta_{n}}q_{j},q_{j})+\sum_{\ell=1}^{i_{0}}d(T^{\theta_{n}}\xi_{\ell},\xi_{\ell})+\sum_{\ell>i_{0}}d(T^{\theta_{n}}\xi_{\ell},\xi_{\ell})\\
	{} &< \delta/2+\sum_{\ell>i_{0}}d((\lambda_{\ell}^{\theta_{n}}-1)\xi_{\ell},0)
	\stackrel{(\ast)}{<} \delta/2+\sum_{\ell>i_{0}}d(2\xi_{\ell},0)
	< \delta,
\end{align*}
where we have used that the metric was constructed from seminorms in $(\ast)$.
We will now demonstrate that $z$ is hyper-recurrent. If $z\in \mathfrak{L}(\omega)$ for a sequence $\omega:=(\omega_{n})_{n}$, it then follows that  $T^{\omega_{n}}(y+\sum_{\ell=1}^{\infty} \xi_{\ell}) \xrightarrow[n\rightarrow \infty]{} y+\sum_{\ell=1}^{\infty} \xi_{\ell}$. Hence, For each $i\in \mathbb{N}$,
\begin{align*}
	\mathbb{P}_{\xi_{i}}\left( \sum_{j=1}^{m}\beta_{j}^{\omega_{n}}q_{j}+\sum_{\ell=1}^{\infty} \lambda_{\ell}^{\omega_{n}} \xi_{\ell}\right) &\xrightarrow[n\rightarrow \infty]{} \mathbb{P}_{\xi_{i}}\left(\sum_{j=1}^{m}q_{j}+\sum_{\ell=1}^{\infty} \xi_{\ell}\right),\\
	\lambda_{i}^{\omega_{n}}\xi_{i} &\xrightarrow[n\rightarrow \infty]{}  \xi_{i},\\
	\lambda_{i}^{\omega_{n}} &\xrightarrow[n\rightarrow \infty]{} 1.
\end{align*}
Thus, $\mathfrak{L}(\omega)$ is dense in $X$.}
\end{proof}

\begin{theorem}\label{per-dense}
Let $X$ be a Fréchet Space and $T\in\mathcal{L}(X)$. If $\overline{\hbox{Per}(T)}=X$
then $T$ is a hyper-recurrent operator and $\hbox{Hr}(T)$ is dense in $X$.
\end{theorem}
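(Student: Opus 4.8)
The plan is to describe the structure of $T$ on its periodic vectors and then to adapt the construction behind Theorem~\ref{disc-esp}, replacing the spectral projections used there by polynomials in $T$ --- this substitution is exactly what allows us to dispense with invertibility. First I would record the algebraic picture. Let $\mathcal{E}_R$ be the set of eigenvectors of $T$ for eigenvalues which are roots of unity, together with $0$, and let $R$ be the set of $\zeta\in\mathbb{T}$ with $\ker(T-\zeta I)\neq\{0\}$; since the roots of unity form a countable set, $R$ is at most countable. If $T^{N}x=x$, then on the closed invariant subspace $P_N:=\ker(T^{N}-I)$ the operator $T$ annihilates $X^{N}-1=\prod_{\zeta^{N}=1}(X-\zeta)$, so the associated Lagrange idempotents --- polynomials in $T$ --- give $P_N=\bigoplus_{\zeta^{N}=1}\ker(T-\zeta I)$ and hence $x\in\mathrm{span}(\mathcal{E}_R)$. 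As every root-of-unity eigenvector is periodic and $\mathrm{Per}(T)$ is a linear subspace (sums and scalar multiples of periodic vectors are periodic), we obtain $\mathrm{Per}(T)=\mathrm{span}(\mathcal{E}_R)$, so $\overline{\mathrm{span}(\mathcal{E}_R)}=X$. Fix a nonzero $\xi_i\in\ker(T-\zeta_i I)$ for each member of an enumeration $R=\{\zeta_i\}_i$, and note that $T$ is recurrent since $\mathrm{Per}(T)\subset\mathrm{Rec}(T)$.

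Given $x_0\in X$ and $\epsilon>0$, I would approximate $x_0$ within $\epsilon/2$ by a finite sum $y=\sum_{i\in F}w_i$ with $F$ finite and $w_i\in\ker(T-\zeta_i I)$. Then I choose scalars $c_i\neq 0$ decaying fast enough --- relative to every continuous seminorm $q_k$ of the corresponding $\xi_i$ and to the coefficient sizes of the interpolation polynomials introduced below --- so that $z:=y+\sum_i c_i\xi_i$ converges in $X$ with $d(z,y)<\epsilon/2$ and so that each $v_i$, defined by $v_i:=c_i\xi_i+w_i$ for $i\in F$ and $v_i:=c_i\xi_i$ otherwise, is nonzero. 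Observe that $v_i\in\ker(T-\zeta_i I)$ and that $T^{n}z-z=\sum_i(\zeta_i^{n}-1)v_i$ for every $n$. A routine diagonal choice of $\omega$ with $\zeta_i^{\omega_n}\to 1$ for every $i$ (finite torus rotations being recurrent) shows $z\in\mathfrak{L}(\omega)$, so $z$ is a recurrent vector.

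The heart of the argument is that $z\in\mathrm{Hr}(T)$. Suppose $z\in\mathfrak{L}(\omega)$ and fix $i_0$. For $m\geq\max(F\cup\{i_0\})$, let $p_m$ be the polynomial with $p_m(\zeta_{i_0})=1$ and $p_m(\zeta_j)=0$ for $1\leq j\leq m$, $j\neq i_0$; then $p_m(T)$ is continuous, so $p_m(T)(T^{\omega_n}z-z)\to 0$ as $n\to\infty$. Since $p_m(T)$ acts on $\ker(T-\zeta_i I)$ as multiplication by $p_m(\zeta_i)$,
\[
p_m(T)(T^{\omega_n}z-z)=(\zeta_{i_0}^{\omega_n}-1)v_{i_0}+\sum_{i>m}c_i(\zeta_i^{\omega_n}-1)p_m(\zeta_i)\xi_i ,
\]
and the remaining sum is, in each seminorm $q_k$, at most $2\,\Sigma_m\sum_{i>m}|c_i|\,q_k(\xi_i)$, where $\Sigma_m$ is the sum of the moduli of the coefficients of $p_m$; by the chosen decay of the $c_i$ this tends to $0$ as $m\to\infty$. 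Letting $n\to\infty$ and then $m\to\infty$ forces $(\zeta_{i_0}^{\omega_n}-1)v_{i_0}\to 0$ in every seminorm, hence $\zeta_{i_0}^{\omega_n}\to 1$ because $v_{i_0}\neq 0$. As $i_0$ was arbitrary, $T^{\omega_n}\xi\to\xi$ for every $\xi\in\ker(T-\zeta_i I)$ and every $i$, so the linear subspace $\mathfrak{L}(\omega)$ contains $\mathrm{span}(\mathcal{E}_R)$ and therefore $\overline{\mathfrak{L}(\omega)}=X$. Thus $z$ is hyper-recurrent with $d(z,x_0)<\epsilon$, so $\mathrm{Hr}(T)$ is dense and $T$ is hyper-recurrent.

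The main obstacle is precisely the lack of invertibility. In Theorem~\ref{disc-esp} one isolates an eigencomponent by projecting onto a line along a $T$-invariant complement built from $T^{-1}$; here that tool is unavailable, so one isolates finitely many eigencomponents at a time through a polynomial in $T$ and absorbs the infinitely many remaining ones into a summable tail. The only delicate point is the bookkeeping for this tail: the coefficients $c_i$ must be chosen small enough to beat all countably many seminorms $q_k$ and all of the interpolation polynomials $p_m$ simultaneously, which is possible since the relevant bounds $\Sigma_m$ and $q_k(\xi_i)$ form fixed sequences that can be out-decayed by the $c_i$.
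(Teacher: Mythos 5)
Your argument is correct and reaches the conclusion by a genuinely different route from the paper's. The paper's proof never touches eigenvectors: it splits according to whether the set $\mathcal{A}(T)$ of prime periods is finite or infinite, and in the infinite case builds a single vector $q=\sum_\ell p_\ell$ out of periodic points whose orbits are metrically separated on rapidly decreasing scales (the condition $4b(p_{\ell+1})<a(p_\ell)$); a direct triangle-inequality estimate then shows that $q\in\mathfrak{L}(\omega)$ forces $m_\ell\mid\omega_n$ eventually for every $\ell$, whence $\ker(T^{m_\ell}-\mathrm{Id})\subset\mathfrak{L}(\omega)$ and $\mathfrak{L}(\omega)$ is dense. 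You instead first diagonalize $T$ on $\mathrm{Per}(T)$ via the Lagrange idempotents of $X^{N}-1$ and then isolate eigencomponents of the test vector $z$ with interpolation polynomials $p_m(T)$, absorbing the infinite tail by out-decaying the fixed sequences $\Sigma_m$ and $q_k(\xi_i)$. This is a clean substitute for the spectral projections used in Theorem \ref{disc-esp}, it removes the paper's case distinction, and the limit interchange ($n\to\infty$ first, then $m\to\infty$, with the tail bound uniform in $n$) is sound.

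Two caveats. First, your route genuinely needs the scalar field to contain the roots of unity: the factorization $X^{N}-1=\prod_{\zeta^{N}=1}(X-\zeta)$ and the resulting eigenspace decomposition of $\ker(T^{N}-I)$ are not available over $\mathbb{R}$, where one would have to pass to the real primary decomposition into rotation planes or complexify. The paper's metric argument is field-agnostic, so as written your proof establishes the theorem for complex Fréchet spaces only --- the case the surrounding subsection is actually concerned with, but worth stating explicitly. Second, a slip of wording: $R$ should be defined as the set of root-of-unity eigenvalues (which is what your countability justification and the definition of $\mathcal{E}_R$ use), not the set of all unimodular eigenvalues, which need not be countable.
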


\begin{proof}
Let $\mathcal{A}(T)$ refer to the subset of $\mathbb{N}$  such that $n$ is the (prime) period of some periodic vector. That is, for each $n \in \mathcal{A}(T)$, there exists $x \in X$ with $T^n x = x$ and $T^k x \neq x$ for $0 < k < n$. In this situation, we write $\mathfrak{p}(x):= n$.

\begin{claim} If $\mathcal{A}(T)$ is finite, then $T^{\max \mathcal{A}(T)} = \mathrm{Id}$ and $\hbox{Hr}(T)$ is dense in $X$.
\end{claim}
\noindent If $\mathcal{A}(T)$ is finite, then combining the pigeonhole principle with the density of periodic points, it follows that for each $x \in X$, there exists $k \in \mathcal{A}(T)$ and a sequence $(y_n)$ of periodic points of period $k$ converging to $x$. Hence, by continuity, $\mathfrak{p}(x) \leq k \in \mathcal{A}(T)$. In fact, $\mathfrak{p}(x)$ divides $k$.

We now show that $A := \{ x:\mathfrak{p}(x) = \max \mathcal{A}(T)\}$ is dense. However, if $A$ would not be dense, then there would exist $x \in A$ and $y_n \to x$ with $\mathfrak{p}(x) > \mathfrak{p}(y_n)$ for all $n$, a contradiction. Hence, it follows from the density of $A$ and the above argument that each element in $\mathcal{A}(T)$ divides $\max \mathcal{A}(T)$. The claim now follows from the observation that $A = \hbox{Hr}(T)$.

\begin{claim} If $\mathcal{A}(T)$ is infinite, then  $\hbox{Hr}(T)$ is dense in $X$.
\end{claim}
\noindent We now fix an invariant metric $d$ on $X$. For $p$ a periodic vector, set
\[
a(p) := \min_{0\leq i<j<\mathfrak{p}(p)}d(T^{i}p,T^{j}p), \quad
b(p) := \max_{0\leq i<j<\mathfrak{p}(p)}d(T^{i}p,T^{j}p).
\]
Now assume that $\mathcal{A}(T)=\{ m_{i} : i \in \mathbb{N} \}$ for a strictly increasing sequence $(m_i : i \in \mathbb{N})$. Hence, using the fact that scalar multiplication in $X$ is continuous, there exists  a sequence $(p_{\ell} : \ell \in \mathbb{N} )$ in $\hbox{Per}(T)$ that such
 \begin{enumerate}
     \item $\mathfrak{p}(p_{\ell}) = m_{\ell}$ for all $\ell \in \mathbb{N}$,
     \item $4b(p_{\ell+1})<a(p_{\ell})$ for all $\ell \in \mathbb{N}$,
     \item $q:=\sum_{\ell \in \mathbb{N}}p_{\ell}\in X$.
 \end{enumerate}
Observe that $q \in \mathfrak{L}((n!))$ as, by translation invariance of the metric,
\begin{eqnarray*}
d(T^{n!}q, q) & = & d(\sum_{m_{\ell}>n}T^{n!}p_{\ell}, \sum_{m_{\ell}>n}p_{\ell})) \leq  \sum_{m_{\ell}>n}d(T^{n!}p_{\ell},p_{\ell})\\
{} &\leq & \sum_{m_{\ell}>n}b(p_{\ell})
< a(p_{1})\left(\sum_{m_{\ell}>n}\frac{1}{4^{\ell}}\right)\xrightarrow{n\rightarrow \infty}0.
\end{eqnarray*}
We can see that if $q\in \mathfrak{L}(\omega)$ for a sequence $\omega:=(\omega_{n})_{n}$, then for each $m_{\ell}$ there is $n_{0}$ such that $m_{\ell}\vert \omega_{n}$ for $n\geq n_{0}$. The proof is by induction. Let's show only the case $\ell=1$,
\begin{eqnarray*}
d(T^{\omega_{n}}q,q) &\geq& d(T^{\omega_{n}}p_{1}, p_{1})-\sum_{\ell>1}d(T^{w_{n}}p_{\ell},p_{\ell})\\
{} &\geq & d(T^{\omega_{n}}p_{1},p_{1})-\frac{a(p_{1})}{12}
\end{eqnarray*}
As a consequence, $d(T^{\omega_{n}}p_{1},p_{1})<a(p_{1})/2$
with $n\geq n_{0}$ for some positive integer $n_{0}$. Therefore, $m_{1}\vert \omega_{n}$ with $n\geq n_{0}$. This is, if $q\in \mathfrak{L}(\omega)$ for a sequence $\omega:=(\omega_{n})_{n}$, then for each $m_{\ell}$ there exists an integer $n_0$ such that $m_{\ell}$ divides $\omega_n$ for all $n\geq n_0$. Recall that, $\hbox{Per}(T)=\left(\bigcup_{\ell\geq 1} \hbox{Ker}(T^{m_{\ell}}-Id)\right)\bigcup \{\hbox{fixed points}\}$. It is clear that $\hbox{Ker}(T^{m_{\ell}}-Id)\subset \mathfrak{L}(\omega)$ for every $\ell\geq 1$. Thus, $\mathfrak{L}(\omega)$ is a dense set of $X$ and therefore, $q \in \hbox{Hr}(T)$. Let us now prove the density of the set $\hbox{Hr}(T)$ in $X$. Take $x\in \hbox{Ker}(T^{m_{1}}-Id)$, and consider $y=x+t(\sum_{\ell>1}p_{\ell})\in \hbox{Hr}(T)$ for $t\ll 1$. It's evident that $\hbox{Ker}(T^{m_{1}}-Id)\subset \overline{\hbox{Hr}(T)}$, and this argument can be extended to the case where $\ell>1$.
\end{proof}

\begin{example}
For a sequence $\lambda=(\lambda_{k})_{k\in\mathbb{N}}\in\ell^{\infty}(\mathbb{N})$, we define the diagonal operator $T_{\lambda}$ given by
\begin{eqnarray*}
T_{\lambda}: (X,\Vert{.\Vert}_{\infty}) &\longrightarrow & (X, \Vert{.\Vert}_{\infty}),\\
(x_{k})_{k} &\longmapsto & (\lambda_{k}x_{k})_{k},
\end{eqnarray*}
where $X=c_{0}(\mathbb{N})$ or $X=\ell^{p}(\mathbb{N})$ with $1\leq p\leq \infty$. If $T_{\lambda}$ is recurrent, then $T_{\lambda}$ is an invertible operator by Theorems 5.3 and 5.4 in \cite{Cos}. Therefore, $T_{\lambda}$ is hyper-recurrent and $\mathrm{Hr}(T_{\lambda})$ is dense in $X$ by Theorem \ref{disc-esp}.
\end{example}

\begin{example}
	Let $X$ be a separable Banach space and $T\in \mathcal{L}(X)$. We say that $(X, T)$ is Devaney chaotic if $T$ is topologically transitive and the periodic points are dense in $X$. In this setting, we have that $\mathrm{Hr}(T)\setminus \hbox{HC}(T)$, with $\hbox{HC}(T)$ referring to the set of hypercyclic vectors, is dense in $X$ since we can choose hyper-recurrent vectors with bounded orbit as in the proof of Theorem \ref{per-dense}.
\end{example}

\subsection{Composition operators on spaces of holomorphic functions} \label{subsec:co}

Let $X$ be a Banach or Fréchet space consisting of holomorphic functions from $\Omega\subset \mathbb{C}$ to $\mathbb{C}$, and $\phi:\Omega\rightarrow \Omega$. The composition operator $C_{\phi}:X\rightarrow X$ associated to $\varphi$ is then defined as $g \mapsto g \circ \phi$, provided that the operation is well-defined.

We begin with reminding the reader that the space of holomorphic functions $\mathrm{H}(\Omega)$ on an
open subset $\Omega \subset \mathbb{C}$ is a Fréchet space with respect to the topology of uniform convergence on compact sets.
In here, $\Omega$ will be either $\mathbb{C}$, $\mathbb{D}$, or $\mathbb{C}^{*} := \mathbb{C}\setminus \{0\}$. Furthermore, we will consider $X \subset \mathrm{H}(\Omega)$, where $X$ is either $\mathrm{H}(\Omega)$, the Hardy space  $\mathrm{H}^{2}(\mathbb{D})$ (cf. Example \ref{Example:Hardy-space} in Subsection \ref{subsec:mo}), the weighted Hardy space of entire functions $E^{p}(\beta)$ (see \eqref{def:entire-weighted-functions}  below) or the Hardy space of Dirichlet series $\mathcal{H}$ (see \eqref{eq:Hardy-space-of-dirichlet} below).

\begin{theorem}\label{composition}
Consider $X=\mathrm{H}(\mathbb{C}), \mathrm{H}(\mathbb{C}^{*}), \mathrm{H}(\mathbb{D}),\mathrm{H}^{2}(\mathbb{D}),  E^{p}(\beta)$, or $\mathcal{H}$. If the composition operator $C_{\phi}$ on $X$ is recurrent, then $C_{\phi}$ is a hyper-recurrent operator and $\mathrm{Hr}(C_{\phi})$ is dense in $X$.
\end{theorem}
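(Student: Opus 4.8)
The plan is to argue space by space: in each of the six cases I would quote the known description of the composition operators $C_{\phi}$ that are recurrent in order to put $\phi$ in a normal form, and then reduce to Theorem \ref{disc-esp} or Theorem \ref{per-dense}. A preliminary observation that will be used repeatedly is that hyper-recurrence, and the density of $\mathrm{Hr}$, are invariant under conjugation by an invertible continuous operator: if $S=VTV^{-1}$ with $V$ invertible, then $\mathfrak{L}_{S}(\omega)=V(\mathfrak{L}_{T}(\omega))$ for every $\omega$, hence $\overline{\mathfrak{L}_{S}(\omega)}=X$ if and only if $\overline{\mathfrak{L}_{T}(\omega)}=X$, and therefore $\mathrm{Hr}(S)=V(\mathrm{Hr}(T))$. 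This applies in particular when $V=C_{\psi}$ is an invertible composition operator, i.e. composition with an automorphism of $\Omega$.

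First I would treat $X=\mathrm{H}(\mathbb{C})$. Here a recurrent $C_{\phi}$ is induced by an affine map $\phi(z)=az+b$ with $|a|=1$. If $a=1$ and $b\neq 0$, then $C_{\phi}$ is hypercyclic by Birkhoff's theorem, so it is hyper-recurrent and the dense set of hypercyclic vectors is contained in $\mathrm{Hr}(C_{\phi})$; if $a=1$ and $b=0$ the operator is the identity. If $a\neq 1$, then $\phi$ has a fixed point $z_{0}=b/(1-a)$, and conjugating by the bounded invertible translation $z\mapsto z+z_{0}$ turns $C_{\phi}$ into $C_{\rho}$ with $\rho(z)=az$. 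For $X=\mathrm{H}(\mathbb{C}^{*})$ the same discussion yields $\rho(z)=az$ with $|a|=1$, or else $\phi(z)=a/z$, which is an involution, so that $C_{\phi}^{2}=\mathrm{Id}$ and every vector is periodic. For $X=\mathrm{H}(\mathbb{D})$ and $X=\mathrm{H}^{2}(\mathbb{D})$, recurrence forces $\phi$ to be an elliptic automorphism, and conjugating by the disk automorphism interchanging $0$ with the interior fixed point of $\phi$ --- a bounded invertible composition operator on both spaces --- again produces $C_{\rho}$ with $\rho(z)=\lambda z$, $|\lambda|=1$. Finally, for $X=E^{p}(\beta)$ the characterization leaves, after the analogous conjugation when needed, $\rho(z)=az$ with $|a|=1$, and for $X=\mathcal{H}$ it leaves the vertical translation $\phi(s)=s+i\tau$.

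It then remains to observe that in every one of these reduced cases the operator is a diagonal operator with unimodular eigenvalues: on $\mathrm{H}(\mathbb{C})$, $\mathrm{H}(\mathbb{D})$, $\mathrm{H}^{2}(\mathbb{D})$ and $E^{p}(\beta)$ the monomials $z^{n}$ ($n\geq 0$), on $\mathrm{H}(\mathbb{C}^{*})$ the Laurent monomials $z^{n}$ ($n\in\mathbb{Z}$), and on $\mathcal{H}$ the Dirichlet monomials $n^{-s}$, are eigenvectors of $C_{\rho}$ with unimodular eigenvalues, and their linear span is dense. Hence $C_{\rho}$ is invertible and has discrete spectrum, so Theorem \ref{disc-esp} shows that it is hyper-recurrent with $\mathrm{Hr}(C_{\rho})$ dense; in the exceptional case $\phi(z)=a/z$ on $\mathbb{C}^{*}$ the operator is periodic and Theorem \ref{per-dense} gives the same conclusion. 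By the conjugation remark of the first paragraph, $\mathrm{Hr}(C_{\phi})$ is then dense in $X$, which is exactly the assertion of the theorem.

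The main obstacle is not the dynamical part, which reduces to the short argument above, but the bookkeeping: assembling the recurrence characterizations for all six spaces --- the cases $\mathrm{H}(\mathbb{C})$, $\mathrm{H}(\mathbb{C}^{*})$, $\mathrm{H}(\mathbb{D})$ and $\mathrm{H}^{2}(\mathbb{D})$ can be extracted from \cite{Cos}, while $E^{p}(\beta)$ and $\mathcal{H}$ require the corresponding results from the composition-operator literature --- and verifying, space by space, that the conjugating composition operators are bounded and invertible and that, in the $E^{p}(\beta)$ and $\mathcal{H}$ cases, no symbol other than a rotation or a vertical translation can make $C_{\phi}$ recurrent. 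Once these facts are in place, the proof is the short combination of Theorems \ref{disc-esp} and \ref{per-dense} with the conjugation invariance of $\mathrm{Hr}$.
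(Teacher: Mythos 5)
Your dynamical core is correct and in fact more unified than the paper's own treatment: where you invoke conjugation-invariance of $\mathrm{Hr}$ together with Theorem \ref{disc-esp} (the reduced operators $C_{az}$ and $C_{s+it}$ are indeed invertible diagonal operators with unimodular eigenvalues on the dense span of monomials, Laurent monomials, or Dirichlet monomials), the paper argues case by case: a Baire-category argument over roots of unity for $\mathrm{H}(\mathbb{C})$, $\mathrm{H}(\mathbb{C}^{*})$, $\mathrm{H}(\mathbb{D})$; Theorem \ref{Hr-H} for $\mathrm{H}^{2}(\mathbb{D})$ and $\mathcal{H}$; and, for $E^{p}(\beta)$ with $b\neq 0$, a Mycielski-type construction (Lemma \ref{Micy}) of a dense family of hyper-recurrent polynomials rather than your conjugation by the translation. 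Your route does require checking that each conjugating operator is bounded and invertible on the space at hand; this is standard for automorphisms of $\mathbb{C}$, $\mathbb{C}^{*}$ and $\mathbb{D}$ acting on the respective spaces, and on $E^{p}(\beta)$ it follows from the standing hypothesis that $((n+1)\beta_{n}/\beta_{n+1})$ is bounded, so that differentiation, hence $e^{z_{0}D}=C_{z+z_{0}}$, is bounded with inverse $e^{-z_{0}D}$. Modulo the point below, these reductions are sound.

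The genuine gap lies in the input you defer to ``the composition-operator literature''. For $\mathrm{H}(\mathbb{C})$, $\mathrm{H}(\mathbb{C}^{*})$, $\mathrm{H}(\mathbb{D})$ and $\mathrm{H}^{2}(\mathbb{D})$ the normal forms of recurrent $C_{\phi}$ really are in \cite{Cos}, but for the remaining two spaces they are not: they are results the paper itself has to establish, and they carry most of the weight of those cases. For $E^{p}(\beta)$ one must prove that continuity of $C_{\phi}$ forces $\phi(z)=az+b$ with $\vert a\vert\leq 1$ and that recurrence forces $\vert a\vert=1$; the paper does this via the point-evaluation functionals $A_{z}$, the identity $C_{\phi}^{*}A_{z}=A_{\phi(z)}$, and a growth lemma from \cite{Doan}. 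For $\mathcal{H}$ one must prove that recurrence forces $\phi(s)=s+it$; the paper does this in Theorem \ref{dirichlet} by combining Bayart's lemma (otherwise $\phi(\mathbb{C}_{1/2})\subset\mathbb{C}_{1/2+\eta}$ for some $\eta>0$) with an explicit function $r2^{-s}$ shown to lie outside $\overline{\mathrm{Rec}(C_{\phi})}$. Without these two characterizations your argument has no normal form to reduce to, so the proof is incomplete for $E^{p}(\beta)$ and $\mathcal{H}$; describing them as bookkeeping understates that they are new results proved in the paper rather than citations.
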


For the proof, we consider three distinct groups: Group 1 consists of $\mathrm{H}(\mathbb{C})$, $\mathrm{H}(\mathbb{C}^{*})$, $\mathrm{H}(\mathbb{D})$ and $\mathrm{H}^{2}(\mathbb{D})$, group 2 comprises the spaces $E^{p}(\beta)$ and group 3 only contains $\mathcal{H}$.

\subsubsection*{Main reductions steps for group 1}

For $\phi \in H(\mathbb{C})$, the composition operator $C_{\phi}: \mathrm{H}(\mathbb{C})\rightarrow \mathrm{H}(\mathbb{C})$ is recurrent if and only if $\phi(z)=az+b$ with $a,b\in \mathbb{C}$ and $\vert{a\vert}=1$ (Th. 6.4 in \cite{Cos}).
This leads to three cases: If $a=1$ and $b\neq 0$, then $C_{\phi}$ is hypercyclic (see \cite[Example 1.4]{livro}), if $a=1$ and $b=0$, then $C_{\phi} = \mathrm{Id}$, whereas $a\neq 1$ requires further analysis.

For an automorphism $\phi$ of $\mathbb{C}^{*}$, when the composition operator $C_{\phi}$ on $\mathrm{H}(\mathbb{C}^{*})$ exhibits recurrence, two possibilities arise: either $\phi(z)=a/z$ for some $a\in \mathbb{C}^{*}$ or $\phi(z)=az$ for some $a\in \mathbb{T}$ \cite[Theorem 6.6]{Cos}. It is evident that analyzing the second case is sufficient.

For the spaces $\mathrm{H}(\mathbb{D})$ and $\mathrm{H}^{2}(\mathbb{D})$, we consider specific holomorphic functions $\phi: \mathbb{D}\rightarrow \mathbb{D}$ corresponding to well-known linear fractional maps. In other words, we can express $\phi$ in the form $\phi(z) = ({az+b})/({cz+d})$, where $a,b,c,d \in \mathbb{C}$ with $ad - bc \neq 0$. A classification of these maps can be found in \cite{Sha}, among which the elliptic automorphisms stand out. Recall that if $\phi$ is an elliptic automorphism, then there exists a linear fractional map $S$ and $\lambda\in \mathbb{T}$ such that $\phi=S^{-1}\psi S$, where $\psi(z)=\lambda z, z\in \mathbb{D}$. Furthermore, if the composition operator $C_{\phi}$ on $\mathrm{H}(\mathbb{D})$ or $\mathrm{H}^{2}(\mathbb{D})$ is recurrent, it falls into one of two scenarios: either $C_{\phi}$ is hypercyclic or $\phi$ is an elliptic automorphism, as discussed in \cite[Theorems 6.9 and 6.12]{Cos}. Hence, it is sufficient to focus on the scenario where the map $\phi$ is an elliptic automorphism. We also refer the reader to the books \cite{Sha, Cowen} for a reference to the theory of composition operators on Hardy and other spaces.

\begin{table}[h!]
    \centering
\renewcommand{\arraystretch}{1.12}
     \begin{tabular}{ l @{$\quad$} l }
          Form of Map &   Spaces \\
        \hline
        $\phi(z)=az+b, a\in \mathbb{T}\setminus \{1\}$ and $b\neq 0$ & $\mathrm{H}(\mathbb{C})$  \\
        $\phi=az, a\in \mathbb{T}$ &  $\mathrm{H}(\mathbb{C}), \mathrm{H}(\mathbb{D}), \mathrm{H}(\mathbb{C}^{*}), \mathrm{H}^{2}(\mathbb{D})$
    \end{tabular}
\end{table}

\begin{proof}[Proof of Theorem \ref{composition} for Group 1] For $\mathrm{H}^{2}(\mathbb{D})$, it suffices to employ Theorem \ref{Hr-H}, whereas the remaining cases are consequences of the Baire category theorem.
\begin{enumerate}
    \item Assume that $\phi(z)=az$ with $a\in \mathbb{T}$ acts on $\mathrm{H}^{2}(\mathbb{D})$.
    Then, as the norm of $f \in \mathrm{H}^{2}(\mathbb{D})$ is given by (see Example \ref{Example:Hardy-space})
    \begin{eqnarray*}
  \Vert{f\Vert}_{H^{2}(\mathbb{D})}=\sup_{0\leq r<1}\left( \frac{1}{2\pi}\int_{0}^{2\pi}\vert{f(re^{i\theta})\vert}^{2}d\theta\right)^{1/2}<\infty,
    \end{eqnarray*}
    it immediately follows that $C_{\phi}$ is an isometry. It is now evident that $C_{\phi}$ is a surjective isometry on a separable Hilbert space.
    Consequently, by Theorem \ref{Hr-H}, $C_{\phi}$ is hyper-recurrent and $\mathrm{Hr}(C_{\phi})$ is dense in $\mathrm{H}^{2}(\mathbb{D})$.
    \item Assume that $\phi(z)=az$ with $a\in \mathbb{T}$ acts on $X=\mathrm{H}(\Omega)$ where $\Omega=\mathbb{C},\mathbb{C}^{*}$ or $\mathbb{D}$ and consider the set
    \begin{eqnarray*}
   \mathcal{A}=\bigcup_{\theta}\{ f\in X: f(\theta z)=f(z)\, \forall z\in \Omega\}
    \end{eqnarray*}
    where $\theta\neq 1$ runs through all the roots of unity. It's worth noting that the set $\mathcal{A}$ is a countable union of closed subspaces that are never dense in $X$. Therefore, by the Baire Category Theorem, the set $\mathrm{Rec}(T)\setminus \mathcal{A}$ is dense in $X$. If $a^{\omega_{n}}$ converges to $1$ for some strictly increasing sequence of positive integers $\omega:=(\omega_{n})_{n}$, then $C_{\phi}^{\omega_{n}}f$ converges to $f$ as $n$ goes to infinity in the topology of uniform convergence on compact sets, this is, $\mathfrak{L}(\omega)=X$. It's not difficult to verify that $\mathrm{Rec}(T)\setminus \mathcal{A}$ is contained in $\mathrm{Hr}(T)$.
    \item  Assume that $\phi(z)=az+b$ with $a\in \mathbb{T}\setminus \{1\}$ and $b\neq 0$ acting on $\mathrm{H}(\mathbb{C})$ and consider the set
    \begin{eqnarray*}
        \mathcal{F}=\bigcup_{\theta}\left\{ f\in \mathrm{H}(\mathbb{C}): f\left(\theta z + \tfrac{\theta-1}{a-1}b\right)=f(z), \forall z\in \mathbb{C}\right\}
    \end{eqnarray*}
    where $\theta\neq 1$ runs through all the roots of unity. By similar arguments to the previous item, we have that the set $\mathrm{Rec}(T)\setminus \mathcal{F}$ is dense in $\mathrm{H}(\mathbb{C})$ and  contained in $\mathrm{Hr}(C_{\phi})$.
\end{enumerate}
\end{proof}

\subsubsection*{Main reduction steps for group 2}

We now recall the definition of the \emph{weighted Hardy space of entire functions}.
Let $\beta=(\beta_{n})_{n\geq 0}$ be a sequence of positive real numbers such that $((n+1)\beta_{n}/\beta_{n+1})_{n\geq 0}$ is bounded. This sequence induces a Banach space of entire functions through
\begin{eqnarray}
\label{def:entire-weighted-functions}
E^{p}(\beta)=\left\{f=\sum_{n=0}^{\infty}{a_{n}z^{n}}: \Vert{f\Vert}=\left( \sum_{n=0}^{\infty}\vert{a_{n}\vert}^{p}\beta_{n}^p\right)^{1/p}<\infty\right\},
\end{eqnarray}
where $1<p<\infty$. The polynomials form a dense subset of $E^{p}(\beta)$. For $p=2$, this setting corresponds to the weighted Hardy space, initially introduced in \cite{Chan}. The hypercyclic behavior of translation operators within this space has been studied in \cite{Chan}, and the boundedness and compactness of composition operators on entire functions of several variables have been explored in \cite{Doan}. The boundedness of such operators relies on the behavior of the sequence of weights, as discussed in \cite{TanBoun}.

To study the composition operator over the space $E^{p}(\beta)$, we first aim to determine the form of the entire function $\phi$ when the composition operator $C_{\phi}$ is continuous on the Banach space $E^{p}(\beta)$. In analogy to the results in \cite{Doan, Cars, TanBoun}, there are  high expectations that these operators have a simple form.

\begin{proposition}
Assume that $\phi$ is an entire function on $\mathbb{C}$ such that the composition operator  $C_{\phi}$ acting on $E^{p}(\beta)$ is continuous.
\begin{enumerate}
 \item \label{item-1-prop-group2} There exist $a,b\in \mathbb{C}$ with $\vert{a\vert}\leq 1$ such that $\phi(z)=az+b$.
 \item \label{item-2-prop-group2} If $C_{\phi}$ is recurrent, then $\vert a \vert =1$.
\end{enumerate}
\end{proposition}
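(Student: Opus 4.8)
The plan is to extract everything from the single consequence of continuity of $C_\phi$: since $C_\phi(z^{n})=(\,\cdot\,)^{n}\circ\phi=\phi^{n}$ (the $n$-th \emph{power} of $\phi$) and $\|z^{n}\|=\beta_{n}$, we have $\|\phi^{n}\|\le\|C_\phi\|\,\beta_{n}$ for every $n\ge 0$; comparing Taylor coefficients then gives $\bigl|[z^{j}]\phi^{n}\bigr|\,\beta_{j}\le\|\phi^{n}\|$, that is,
\[
\bigl|[z^{j}]\phi^{n}\bigr|\ \le\ \frac{\|C_\phi\|\,\beta_{n}}{\beta_{j}}\qquad(j,n\ge 0).
\]
Writing $M:=\sup_{n}(n+1)\beta_{n}/\beta_{n+1}<\infty$, the weight hypothesis reads $\beta_{n+1}\ge\frac{n+1}{M}\beta_{n}$, so $\beta_{n}\ge\beta_{0}\,n!/M^{n}$; in particular $\beta$ grows at least factorially and, for every integer $m\ge 2$,
\[
\frac{\beta_{nm}}{\beta_{n}}=\prod_{\ell=n}^{nm-1}\frac{\beta_{\ell+1}}{\beta_{\ell}}\ \ge\ \prod_{\ell=n}^{nm-1}\frac{\ell+1}{M}\ \ge\ \Bigl(\frac{n}{M}\Bigr)^{n(m-1)},
\]
which eventually exceeds any quantity of the form $C\,r^{n}$.

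For \ref{item-1-prop-group2} I would argue in two stages. First, $\phi$ must be a \emph{polynomial}: every $g\in E^{p}(\beta)$ satisfies $|[z^{k}]g|\le\|g\|/\beta_{k}\le\|g\|M^{k}/(\beta_{0}k!)$, hence $|g(z)|\le(\|g\|/\beta_{0})e^{M|z|}$, so $g$ has exponential type at most $M$; applying this to $g=\phi^{n}$, whose exponential type is $n$ times that of $\phi$, forces $\mathrm{type}(\phi)=0$, and then the coefficient bound displayed above, combined with $\beta_{j}\ge\beta_{0}j!/M^{j}$, is pushed — exactly as in the analysis of composition operators on weighted spaces of entire functions in \cite{Doan, Cars, TanBoun} — to the conclusion that $\phi$ has at most polynomial growth, i.e.\ $\phi(z)=\sum_{k=0}^{m}c_{k}z^{k}$ with $c_m\ne 0$. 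Second, if $m\ge 2$ then $[z^{nm}]\phi^{n}=c_{m}^{\,n}$, so $|c_{m}|^{n}\le\|C_\phi\|\,\beta_{n}/\beta_{nm}$, i.e.\ $\beta_{nm}/\beta_{n}\le\|C_\phi\|\,|c_{m}|^{-n}$, contradicting the displayed super-exponential lower bound. Hence $\phi(z)=az+b$, and from $[z^{n}]\phi^{n}=a^{n}$ we get $|a|^{n}\beta_{n}\le\|\phi^{n}\|\le\|C_\phi\|\,\beta_{n}$, so $|a|^{n}\le\|C_\phi\|$ for all $n$, and therefore $|a|\le 1$.

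For \ref{item-2-prop-group2}, assume $C_\phi$ is recurrent and, for contradiction, that $|a|<1$; then $a\ne 1$, $\phi$ has the unique fixed point $z_{0}=b/(1-a)$, and its $k$-th iterate satisfies $\phi^{[k]}(z)-z_{0}=a^{k}(z-z_{0})$, so $\phi^{[k]}\to z_{0}$ uniformly on compact subsets of $\mathbb{C}$. Let $f\in\mathrm{Rec}(C_\phi)$ and pick $(\omega_{n})$ with $C_\phi^{\omega_{n}}f\to f$ in $E^{p}(\beta)$. The point-evaluation functionals on $E^{p}(\beta)$ are bounded (the series $\sum_{j}(|z|^{j}/\beta_{j})^{p/(p-1)}$ converges because $\beta$ grows factorially), so norm convergence implies pointwise convergence; but $(C_\phi^{\omega_{n}}f)(z)=f\bigl(\phi^{[\omega_{n}]}(z)\bigr)\to f(z_{0})$ for every $z$, whence $f$ is the constant function $f(z_{0})$. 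Thus $\mathrm{Rec}(C_\phi)$ is contained in the space of constant functions, a one-dimensional — hence closed, and, since $E^{p}(\beta)$ is infinite dimensional, proper — subspace, which is impossible for a recurrent operator. Therefore $|a|=1$.

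The only genuinely delicate point is the first stage of \ref{item-1-prop-group2}, namely that mere continuity of $C_\phi$ already forces $\phi$ to be a polynomial; the degree reduction via the leading coefficient, the estimate $|a|\le 1$, and all of \ref{item-2-prop-group2} are short and elementary.
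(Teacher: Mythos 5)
Your route for part (1) is genuinely different from the paper's: you test $C_\phi$ against the monomials $z^n$ and read off coefficient inequalities $|[z^j]\phi^n|\le \Vert C_\phi\Vert\,\beta_n/\beta_j$, whereas the paper tests the adjoint against the point evaluations $A_z$, shows $\Phi_1(|\phi(z)|^q)/\Phi_1(|z|^q)$ is bounded, and invokes Lemma 4.5 of \cite{Doan} to get $\limsup_{|z|\to\infty}|\phi(z)/z|\le 1$, which yields $\phi(z)=az+b$ with $|a|\le 1$ in one stroke. Several pieces of your argument are correct and clean: the degree reduction via $[z^{nm}]\phi^n=c_m^n$ against the super-exponential growth of $\beta_{nm}/\beta_n$, the bound $|a|\le 1$ from $[z^n]\phi^n=a^n$, and all of part (2) (which matches the paper's fixed-point argument $\phi^{[k]}\to b/(1-a)$, and which you in fact justify more carefully than the paper by noting that bounded point evaluations turn norm convergence into pointwise convergence and that the constants form a proper closed subspace).

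The gap is exactly where you say it is, and it is a real one: your first stage only establishes that $\phi$ has exponential type $0$ with respect to order $1$, and that does \emph{not} imply $\phi$ is a polynomial (e.g.\ $\cos\sqrt{z}$, or any entire function of order $<1$, has type $0$). The sentence claiming that the coefficient bound ``is pushed \dots to the conclusion that $\phi$ has at most polynomial growth'' is an assertion, not an argument, and the references you point to (\cite{Doan, Cars, TanBoun}) do not close it by your method: they prove polynomiality via the reproducing kernel / point-evaluation norms, i.e.\ precisely the dual route the paper takes. To salvage your monomial approach one would need something like: $\max_{|z|=r}|\phi|^n=\max_{|z|=r}|\phi^n|\le \Vert C_\phi\Vert\,\beta_n\Psi(r)$ with $\Psi(r)=\sum_j r^j/\beta_j$, followed by an optimization over $n$ (essentially the maximum-term/central-index analysis) showing $\inf_n(\beta_n\Psi(r))^{1/n}=O(r)$ under the hypothesis $\sup_n (n+1)\beta_n/\beta_{n+1}<\infty$; that is a nontrivial estimate, is not supplied, and is in substance the content of the lemma from \cite{Doan} that the paper cites. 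As written, the proposal does not rule out a non-polynomial $\phi$ of order less than $1$, so part (1) is incomplete; parts (1, degree and modulus) and (2) stand once polynomiality is granted.
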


\begin{proof}
For each $z\in \mathbb{C}$, consider the linear functional $A_{z}\in (E^{p}(\beta))^{*}$ given by $A_{z}(f)=f(z)$. It follows from Hölder's inequality that
\begin{eqnarray*}
\Vert{A_{z}\Vert} \leq \left( \sum_{n=0}^{\infty}\frac{\vert{z\vert}^{qn}}{\beta^{q}_{n}}\right)^{1/q}
\end{eqnarray*}
where $q$ is the conjugate exponent of $p$. Moreover, for $\Phi_z(t) :=  \sum_{n} \beta_n^{-q} |z|^{qn} z^{-n} t^n$, we have $\| \Phi_z\| < \infty$ as $\beta_n$ grows faster than $n!$, and $\Vert{A_{z}\Vert} = |A_{z}(\Phi_z)| / \| \Phi_z\| = \Phi_1(|z|^q)^{1/q}$.
As $C^{*}_{\phi}(A_{z})=A_{\phi(z)}$ for each $z\in \mathbb{C}$, it follows that
\begin{eqnarray*}
\Vert{C_{\phi}\Vert}= \Vert{C^{*}_{\phi}\Vert}=\sup_{\Lambda\neq 0}\frac{\Vert{C_{\phi}^{*}(\Lambda)\Vert}}{\Vert{\Lambda\Vert}}\geq \sup_{z\in \mathbb{C}\setminus \{0\}}\frac{\Vert{C^{*}_{\phi}A_{z}\Vert}}{\Vert{A_{z}\Vert}}
 = \sup_{z\in \mathbb{C}\setminus \{0\}}  \left( \frac{ \Phi_1(|\phi(z)|) }{ \Phi_1(|z|)} \right)^{1/q}
\end{eqnarray*}
Hence,  ${\Phi(\vert{\phi(z)\vert}^{q})}/{\Phi(\vert{z\vert}^{q})}$ is uniformly bounded. It then follows from Lemma 4.5 in   \cite{Doan} that
\[  \limsup_{|z| \to \infty} |\phi(z)/z| \leq 1. \]
As $\phi$ is an entire function on $\mathbb{C}$, $\phi(z) = az + b$, for some $a,b\in \mathbb{C}$ with $\vert{a\vert}\leq 1$. This proves the assertion in \ref{item-1-prop-group2}. In order to prove \ref{item-2-prop-group2}, assume that $\vert a \vert < 1$. Then
\[ \phi^n(z) = a^n z + a^{n-1}b + \cdots + b  = a^n z + \frac{1  - a^n}{1-a}b
\xrightarrow{n\to\infty} \frac{b}{1-a}. \]
Hence, if $\vert a \vert < 1$, then  the only $C_{\phi}$-recurrent functions are the constant functions.
\end{proof}

We now consider the following four cases. If $a=1$ and $ b\neq 0$, then $C_{\phi}$ is hypercyclic (see  \cite[Exercise 8.1.2]{Karl}). Moreover, if $a=1$ and $b=0$, then $\phi$ is the identity. In the third case, that is $a \neq 1$ is root of unity and $b \in \mathbb{C}$, it follows from
\begin{equation} \label{eq:iterates-of-phi} \phi^n(z) - z = a^n z + \tfrac{1  - a^n}{1-a}b -z = (a^n -1)(z+ b/(a-1)) \end{equation}
that $C_\varphi$ is periodic (i.e. there exists $n> 1$ such that $C_\varphi^n$  is the identity.
In particular, in these three cases, $\mathrm{Hr}(C_{\phi})$ is dense in $E^{p}(\beta)$. In the remaining case, that is $a\neq 1$, it follows from
\eqref{eq:iterates-of-phi} and the norm on $E^{p}(\beta)$ that any polynomial is recurrent.
In order to show that  $\mathrm{Hr}(C_{\phi})$ is dense in $E^{p}(\beta)$ also in this case, we need the following  lemma.

\begin{lemma}\label{Micy}
For $a,b\in \mathbb{C}$ with $a\in \mathbb{T}\setminus\{1\}$ and $b\neq 0$, there exists a dense and uncountable set $M=M(a,b)\subset \mathbb{C}$ such that for any $k\geq 1$ and any pairwise different points $c_{1},\ldots, c_{k}$ in $M$, the following holds:
\begin{eqnarray*}
\sum_{j=1}^{k}c_{j}b^{j}\left(\frac{\theta-1}{a-1}\right)^{j}\neq 0
\end{eqnarray*}
for all   roots of unity $ \theta$ with  $\theta \neq 1$.
\end{lemma}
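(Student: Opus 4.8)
The plan is to deduce Lemma~\ref{Micy} from the Mycielski Theorem, in exactly the same spirit as the proof of Theorem~\ref{equiv}. First I would introduce, for each root of unity $\theta\neq1$, the scalar
\[
w_\theta := b\,\frac{\theta-1}{a-1},
\]
and observe that $w_\theta\neq0$, since $b\neq0$, $\theta\neq1$ and $a\neq1$; moreover, as there are only countably many roots of unity $\theta\neq1$, the collection $\{w_\theta\}$ is countable. In this notation, the conclusion we are after is that $\sum_{j=1}^{k}c_j w_\theta^{\,j}\neq0$ for all pairwise different $c_1,\dots,c_k\in M$ and all roots of unity $\theta\neq1$.

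Next I would define, for each $n\in\mathbb{N}$, the set
\[
\mathcal{R}_n := \Bigl\{(c_1,\dots,c_n)\in\mathbb{C}^n : \textstyle\sum_{j=1}^{n}c_j w_\theta^{\,j}\neq0 \ \text{ for every root of unity } \theta\neq1\Bigr\},
\]
and check that $\mathcal{R}_n$ is residual in $\mathbb{C}^n$. Indeed, for a fixed $\theta$ the map $(c_1,\dots,c_n)\mapsto\sum_{j=1}^{n}c_j w_\theta^{\,j}$ is a nonzero linear functional on $\mathbb{C}^n$, because its coefficient vector $(w_\theta,\dots,w_\theta^{\,n})$ is nonzero when $w_\theta\neq0$; hence its kernel is a proper closed linear subspace of $\mathbb{C}^n$, and therefore nowhere dense. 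Consequently $\mathbb{C}^n\setminus\mathcal{R}_n$ is a countable union of nowhere dense sets, hence meager, and $\mathcal{R}_n$ is residual.

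Then I would apply the Mycielski Theorem with $X=\mathbb{C}$, which is a separable complete metric space without isolated points: it provides a Mycielski set $\mathcal{K}\subset\mathbb{C}$ such that $(x_1,\dots,x_n)\in\mathcal{R}_n$ for every $n$ and any pairwise different points $x_1,\dots,x_n$ in $\mathcal{K}$. Setting $M:=\mathcal{K}$, the set $M$ is dense because it meets every nonempty open subset of $\mathbb{C}$ in a Cantor set, and uncountable because it contains a Cantor set; and by construction
\[
\sum_{j=1}^{k}c_j b^{\,j}\Bigl(\tfrac{\theta-1}{a-1}\Bigr)^{j}=\sum_{j=1}^{k}c_j w_\theta^{\,j}\neq0
\]
for any pairwise different $c_1,\dots,c_k\in M$ and any root of unity $\theta\neq1$, which is precisely the assertion of the lemma.

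The main obstacle is essentially conceptual and disappears once one recognises this as a Mycielski-type statement; the only point needing a little care is the residuality of $\mathcal{R}_n$, i.e.\ observing that the offending conditions---one for each of the countably many roots of unity $\theta\neq1$---each cut out a proper, hence nowhere dense, hyperplane of $\mathbb{C}^n$, so that their union is meager. One should also note that the inequality is not symmetric under permuting the $c_j$, but this is harmless: the Mycielski conclusion holds for any pairwise different $n$-tuple in whatever order it is listed, so membership of the specific ordered tuple $(c_1,\dots,c_k)$ in $\mathcal{R}_k$ is guaranteed.
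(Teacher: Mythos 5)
Your proposal is correct and follows essentially the same route as the paper: both define the residual sets $\mathcal{R}_n$ as complements of the countable union of hyperplanes indexed by roots of unity $\theta\neq 1$ and then invoke the Mycielski Theorem to extract $M$. You merely spell out the residuality argument (nonvanishing of the coefficient vector, hence proper closed hyperplanes) which the paper leaves implicit.
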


\begin{proof}
Consider the sets $\mathcal{A}_{n}$ for each $n\geq 1$ defined as:
\begin{eqnarray*}
\mathcal{A}_{n}=\bigcup_{\theta}\left\{(c_{1},\ldots,c_{n})\in \mathbb{C}^{n}: \sum_{j=1}^{n}c_{j}b^{j}\left(\frac{\theta-1}{a-1}\right)^{j}=0\right\}
\end{eqnarray*}
where $\theta\neq 1$ runs through all  roots of unity. For every $n\in \mathbb{N}$, the set $\mathcal{R}_{n}=\mathbb{C}^{n}\setminus \mathcal{A}_{n}$ is a residual set in $\mathbb{C}^{n}$. By the Mycielski Theorem, there exists a dense uncountable set $M\subset \mathbb{C}$ such that $(c_{1},\ldots, c_{k})\in \mathcal{R}_{k}$ for any $k\geq 1$ and any pairwise different $k$ points $c_{1},\ldots, c_{k}$ in $M$.
\end{proof}

\begin{proof}[Proof of Theorem \ref{composition} for Group 2]
It is enough to show that the operator $C_{\phi}$ is hyper-recurrent for $\phi(z)=az+b$ such that $a\in \mathbb{T}\setminus\{1\}$ and $b\in \mathbb{C}$.  It's worth noting that the set of polynomials is contained in $\mathrm{Rec}(C_{\phi})$. We distinguish the two cases  $b=0$ and $b\neq 0$.

\begin{description}
 \item $\mathbf{b \neq  0.}$  In order to show the density of the hyper-recurrent vectors, we will build a dense collection of polynomials which are hyper-recurrent vectors, whose coefficients are obtained using the Lemma \ref{Micy} . That is, for $a\neq 1$ and $b\neq 0$, using the set $M$ of Proposition \ref{Micy}, we let $F$ refer to the set of all non-constant polynomials of the form $P(z)=\sum_{j=0}^{\ell}c_{j}z^{j}$ where $c_{0}\in \mathbb{C}$ and $\{c_{j}\}_{j=1}^{\ell}\subset M$ are pairwise distinct.

\begin{claim}
$F$ is dense in $E^{p}(\beta)$ and $F\subset \mathrm{Hr}(C_{\phi})$.
\end{claim}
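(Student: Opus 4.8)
The plan is to establish the two assertions separately, density of $F$ being routine and the inclusion $F\subset\mathrm{Hr}(C_{\phi})$ being where Lemma~\ref{Micy} is essential. For density: since the polynomials are dense in $E^{p}(\beta)$, it suffices to approximate an arbitrary polynomial $\sum_{j=0}^{\ell}b_{j}z^{j}$, and after a harmless perturbation we may take $\ell\geq 1$. Because the set $M=M(a,b)$ supplied by Lemma~\ref{Micy} is dense and uncountable, we may choose pairwise distinct $c_{1},\dots,c_{\ell}\in M$ with $|c_{j}-b_{j}|$ arbitrarily small and put $c_{0}:=b_{0}$; the resulting polynomial lies in $F$ and is close to the given one, so $\overline{F}=E^{p}(\beta)$.

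For the inclusion I would first move to the coordinate in which $\phi$ is a rotation about its fixed point. Setting $\gamma:=b/(a-1)$ (well defined since $a\neq 1$, and nonzero since $b\neq 0$), one has $\phi^{n}(z)=a^{n}(z+\gamma)-\gamma$, so $C_{\phi}^{n}Q(z)=Q\bigl(a^{n}(z+\gamma)-\gamma\bigr)$ for every polynomial $Q$, again a polynomial of degree $\deg Q$. Now fix $P=\sum_{j=0}^{\ell}c_{j}z^{j}\in F$, so $\ell\geq 1$ and $c_{1},\dots,c_{\ell}$ are pairwise distinct in $M$ (we may assume $c_{\ell}\neq 0$; otherwise replace $\ell$ by $\deg P$, whose leading coefficient still lies in $M$), and suppose $P\in\mathfrak{L}(\omega)$, i.e. $C_{\phi}^{\omega_{n}}P\to P$ in $E^{p}(\beta)$. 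Since $P$ and all the iterates $C_{\phi}^{\omega_{n}}P$ lie in the finite-dimensional space of polynomials of degree at most $\ell$, on which the $E^{p}(\beta)$-norm is equivalent to the supremum of the coefficients, this convergence is coefficient-wise.

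The decisive step is to deduce that $a^{\omega_{n}}\to 1$. If it did not, then, $(a^{\omega_{n}})$ lying in the compact set $\mathbb{T}$, some subsequence would converge to a point $\theta\in\mathbb{T}$ with $\theta\neq 1$, and coefficient-wise convergence would force the polynomial identity $P\bigl(\theta(z+\gamma)-\gamma\bigr)=P(z)$. Comparing leading coefficients gives $\theta^{\deg P}=1$, so $\theta$ is a root of unity different from $1$; evaluating the identity at $z=0$ gives $P\bigl((\theta-1)\gamma\bigr)=P(0)=c_{0}$, that is,
\[
\sum_{j=1}^{\ell}c_{j}\,b^{j}\left(\frac{\theta-1}{a-1}\right)^{j}=0,
\]
contradicting Lemma~\ref{Micy}. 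Hence $a^{\omega_{n}}\to 1$, whence $\tfrac{1-a^{\omega_{n}}}{1-a}\,b\to 0$ and therefore $C_{\phi}^{\omega_{n}}Q\to Q$ coefficient-wise, hence in $E^{p}(\beta)$, for every polynomial $Q$; since the polynomials are dense, $\overline{\mathfrak{L}(\omega)}=E^{p}(\beta)$, and as $P$ is already recurrent, $P\in\mathrm{Hr}(C_{\phi})$. The whole difficulty is concentrated in that displayed non-vanishing, which is precisely the content of Lemma~\ref{Micy} (itself obtained from the Mycielski Theorem); everything else is bookkeeping, the one slightly delicate point being the reduction to $\deg P=\ell$ indicated above.
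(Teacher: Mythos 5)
Your argument is correct and follows essentially the same route as the paper: assume $a^{\omega_n}\not\to 1$, pass to a limit point $\theta\in\mathbb{T}\setminus\{1\}$, derive the polynomial identity $P(\theta(z+\gamma)-\gamma)=P(z)$, note that comparing leading coefficients forces $\theta$ to be a root of unity, and evaluate at $z=0$ to contradict Lemma \ref{Micy}; once $a^{\omega_n}\to1$, all polynomials lie in $\mathfrak{L}(\omega)$. The only (harmless) cosmetic differences are that you extract coefficient-wise convergence from finite-dimensionality where the paper uses pointwise evaluation, and that you spell out the density of $F$, which the paper treats as evident.
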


It is evident that the set $F$ is dense in $X$. Furthermore, note that convergence of $g_{k}$ to $g$ in $E^{p}(\beta)$ implies that   $g_{k}(z)$ converges to $g(z)$ for each $z\in \mathbb{C}$. Now assume that $g\in F$ belongs to $\mathfrak{L}(\omega)$ for some $\omega:=(\omega_{n})_{n}$. , we will show that the sequence $(a^{\omega_{n}})_{n}$ converges to 1.
\begin{eqnarray*}
g\left(a^{\omega_{n}}z+\frac{a^{\omega_{n}}-1}{a-1}b\right)\xrightarrow[n\rightarrow \infty]{}g(z)
\end{eqnarray*}
for each $z\in \mathbb{C}$. Suppose $a^{\omega_{n}}$ does not converge to 1, then there is $\beta\in \mathbb{T}\setminus\{1\}$ a limit point of $(a^{\omega_{n}})_{n}$ such that $g(\beta z+\frac{\beta-1}{a-1}b)=g(z)$ for each $z\in \mathbb{C}$. We observe that $\beta$ is a root of unity since $g$ is a non-constant polynomial.

Now, we can write $g(z)=c_{0}+\sum_{i=j}^{\ell}c_{j}z^{j}$ for some $c_{0}\in \mathbb{C}$ and $\{c_{j}\}_{j=1}^{\ell}\subset M$. Thus, for each $z\in \mathbb{C}$ we have that
\begin{eqnarray*}
c_{0}+\sum_{j=1}^{\ell}c_{j}\left(\beta z+\frac{\beta-1}{a-1}b\right)^{j}&=&c_{0}+\sum_{i=1}^{\ell}c_{i}z^{j}
\end{eqnarray*}
in particular, for $z=0$ in the above equation,
\begin{eqnarray*}
\sum_{i=1}^{\ell}c_{j}\left(\frac{\beta-1}{a-1}\right)^{j}b^{j}&=& 0
\end{eqnarray*}
This leads to a contradiction. If $a^{\theta_{n}}$ converges to 1 for some $\theta := (\theta_{n})_{n}$, then $\mathfrak{L}(\theta)$ contains all polynomials.  This concludes the claim.

\item $\mathbf{b =  0.}$  It's worth noting that in this case, the composition operator $C_{\phi}$ is an isometry, which implies that $\mathrm{Rec}(T)=E^{p}(\beta)$. Let us consider the dense set $\mathcal{G}$ defined by
\begin{eqnarray*}
\mathcal{G}:=\left\{f\in E^{p}(\beta): f(z)=\sum_{n=0}^{\infty}c_{n}z^{n}\; \text{with}\; c_{1}\neq 0\right\}
\end{eqnarray*}
We now show that  $\mathcal{G}$ is contained in $\mathrm{Hr}(C_{\phi})$. If $f\in \mathcal{G} \cap \mathfrak{L}(\omega)$ for some $\omega\in \mathfrak{C}$, then
\begin{eqnarray*}
\Vert{C^{\omega_{k}}_{\phi}f-f\Vert}^{p}=\sum_{n=0}^{\infty} \vert{c_{n}\vert}^{p}\vert{a^{n\omega_{k}}-1\vert}^{p}\beta_{n}^{p} \xrightarrow[k\rightarrow \infty]{} 0.
\end{eqnarray*}
Hence $a^{\omega_{n}}$ converges to $1$ and therefore, $\mathfrak{L}(\omega)=E^{p}(\beta)$.
\end{description}
 \end{proof}

\subsubsection*{Main Steps for Group 3}

The Hardy space of Dirichlet series $\mathcal{H}$ is defined as
\begin{eqnarray} \label{eq:Hardy-space-of-dirichlet}
\mathcal{H}=\left\{f(s)=\sum_{n=1}^{\infty}\frac{a_{n}}{n^{s}}\; \text{with}\; \|f\| = \left(\sum_{n=1}^{\infty}\vert{a_{n}\vert}^{2} \right)^{1/2} <\infty\right\}
\end{eqnarray}
and was introduced by Hedenmalm, Lindqvist, and Seip in \cite{Lind}. As it is well-known, Dirichlet series are well defined on half-spaces of the form $\mathbb{C}_\theta := \{s\in \mathbb{C}:\mathfrak{R}(s)>\theta\}$, and if $\theta \in \mathbb{R}$ is minimal with this property, then
$\theta$ is referred to as the abscissa of convergence of the series  (for further details on the theory of Dirichlet series, we refer to \cite{Queff, Defant}). In the case of $f \in \mathcal{H}$, the Cauchy-Schwarz inequality implies that $f$ converges in the
the half-plane $\mathbb{C}_{\sfrac12}$ and in fact is holomorphic in there (\cite{Lind}).

Gordon and Hedenmalm characterized the composition operators on $\mathcal{H}$ in \cite{Heden} through the following result:
\begin{theorem}[Gordon and Hedenmalm, \cite{Heden}]
An analytic function $\phi: \mathbb{C}_{\sfrac12}\rightarrow \mathbb{C}_{\sfrac12}$ defines a continuous composition operator $C_{\phi}:\mathcal{H}\rightarrow \mathcal{H}$ if and only if
\begin{enumerate}
    \item The function is of the form
    $\phi(s)=c_{0}s+\varphi(s)$,
    where $c_{0}\in \mathbb{N}\cup \{0\}$  and $\varphi(s)=\sum_{1}^{\infty}c_{n}n^{-s}$ admits a representation by a Dirichlet series that is convergent in some half-plane $\mathbb{C}_\theta$.
    \item $\phi$ has an analytic extension to $\mathbb{C}_{0} = \{s :\mathfrak{R}(s)>0\}$, also denoted by $\phi$, such that
      \[ \phi(\mathbb{C}_{0})\subset \begin{cases}
                                            \mathbb{C}_{0} & \mathrm{if}\; c_{0}\geq 1, \\
                                             \mathbb{C}_{\sfrac12}  & \mathrm{if}\; c_{0} = 0.
                                     \end{cases}
      \]
\end{enumerate}
\end{theorem}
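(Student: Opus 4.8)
The plan is to prove the two implications separately, exploiting the reproducing kernel structure of $\mathcal{H}$ together with Carlson's vertical mean-value theorem. Recall that $\mathcal{H}$ is a reproducing kernel Hilbert space with kernel $K(s,w)=\zeta(s+\bar w)$ for $\mathfrak{R}(s),\mathfrak{R}(w)>\sfrac12$, so that Cauchy--Schwarz yields the pointwise bound $|f(w)|\leq\sqrt{\zeta(2\mathfrak{R}(w))}\,\|f\|$, and that a composition operator acts on kernels by $C_{\phi}^{\ast}K_{w}=K_{\phi(w)}$; the latter is meaningful exactly because the standing hypothesis gives $\phi(\mathbb{C}_{\sfrac12})\subset\mathbb{C}_{\sfrac12}$. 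As a second tool I would use that, for $g=\sum_{n}b_{n}n^{-s}\in\mathcal{H}$ and $\sigma>\sfrac12$, Carlson's theorem gives $\lim_{T\to\infty}\tfrac{1}{2T}\int_{-T}^{T}|g(\sigma+it)|^{2}\,dt=\sum_{n}|b_{n}|^{2}n^{-2\sigma}$, which links the Hilbert norm to mean values along vertical lines.

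For necessity, suppose $C_{\phi}$ is bounded. Since $n^{-s}\in\mathcal{H}$, each image $C_{\phi}(n^{-s})=n^{-\phi(s)}$ is again a convergent Dirichlet series, and because $C_{\phi}$ preserves the relation $n^{-s}m^{-s}=(nm)^{-s}$ it suffices to understand $p^{-\phi(s)}$ for primes $p$. I would first extract the linear coefficient by setting $c_{0}:=\lim_{\mathfrak{R}(s)\to\infty}\phi(s)/s$ and writing $n^{-\phi(s)}=(n^{c_{0}})^{-s}\exp(-(\phi(s)-c_{0}s)\log n)$: since the admissible frequencies of a Dirichlet series are precisely the logarithms of positive integers, $n^{c_{0}}$ must be a positive integer for every $n$, forcing $c_{0}\in\mathbb{N}\cup\{0\}$. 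Writing $\varphi(s):=\phi(s)-c_{0}s$, the surviving factor $\exp(-\varphi(s)\log n)$ must then be a Dirichlet series convergent in some half-plane, which yields that $\varphi$ is itself a convergent Dirichlet series and establishes form (1). The mapping condition (2) follows by combining the convergence half-plane of $\varphi$ with $\phi(\mathbb{C}_{\sfrac12})\subset\mathbb{C}_{\sfrac12}$: when $c_{0}\geq 1$ a Julia-type lemma for self-maps of $\mathbb{C}_{0}$ forces $\mathfrak{R}(\phi(s))\geq\mathfrak{R}(s)$ and hence $\phi(\mathbb{C}_{0})\subset\mathbb{C}_{0}$, whereas for $c_{0}=0$ the image of the larger half-plane $\mathbb{C}_{0}$ must remain in $\mathbb{C}_{\sfrac12}$.

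For sufficiency, assume $\phi$ has form (1) and satisfies (2). I would recast boundedness of $C_{\phi}$ as a Carleson-measure condition: using the pointwise kernel bound, controlling $\|C_{\phi}f\|=\|f\circ\phi\|$ reduces to showing that $\phi$ transports a suitable vertical-line measure to a Carleson measure for $\mathcal{H}$. In the case $c_{0}\geq 1$, Julia's lemma already yields $\mathfrak{R}(\phi(s))\geq\mathfrak{R}(s)$, so $\phi$ maps $\mathbb{C}_{\sfrac12}$ into $\mathbb{C}_{\sfrac12}$ and $f\circ\phi$ can be evaluated and estimated on lines $\mathfrak{R}(s)=\sigma>\sfrac12$ via the Carlson mean value above, reducing the claim to a classical half-plane Carleson estimate. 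In the case $c_{0}=0$, the stronger inclusion $\phi(\mathbb{C}_{0})\subset\mathbb{C}_{\sfrac12}$ is exactly what pushes the relevant measure strictly into $\mathbb{C}_{\sfrac12}$, where the Carleson condition and the resulting bound $\|C_{\phi}f\|\lesssim\|f\|$ can be verified.

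The main obstacle I anticipate is precisely this Carleson / mean-value estimate on the sufficiency side, and most acutely the case $c_{0}=0$: verifying that $\phi$ sends vertical-line Lebesgue measure forward to a Carleson measure for $\mathcal{H}$ is a genuine harmonic-analysis input rather than a formal manipulation, and it is here that the strict condition $\phi(\mathbb{C}_{0})\subset\mathbb{C}_{\sfrac12}$ (rather than merely $\mathbb{C}_{\sfrac12}\to\mathbb{C}_{\sfrac12}$) becomes indispensable. A secondary delicate point, on the necessity side, is the rigidity forcing $c_{0}$ to be a nonnegative integer, which rests on the fact that the frequencies of all the test images $n^{-\phi(s)}$ must simultaneously be logarithms of integers.
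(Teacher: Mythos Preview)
The paper does not prove this statement: it is quoted verbatim as a known result of Gordon and Hedenmalm \cite{Heden} and is used only as a tool in the proof of Theorem~\ref{dirichlet}. There is therefore no ``paper's own proof'' to compare your proposal against.

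That said, your outline is a reasonable sketch of the original Gordon--Hedenmalm argument (reproducing kernel structure, vertical mean values / Carlson's theorem, and a Carleson-type estimate for sufficiency), and you have correctly identified the two genuinely delicate points: the rigidity that forces $c_{0}\in\mathbb{N}\cup\{0\}$, and the Carleson-measure estimate in the $c_{0}=0$ case. If you were actually asked to reproduce the proof, the main gap in your proposal is that the sufficiency direction is only gestured at---``reducing the claim to a classical half-plane Carleson estimate'' is precisely the substantive content of the Gordon--Hedenmalm paper, and your write-up would need to either carry it out or cite it explicitly. But for the purposes of the present paper, no proof is expected at all.
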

It follows from \cite[Remark p. 329]{Heden} that the composition operator is contractive if and only if $c_{0} \in \{1,2,3, \ldots\}$. In the ensuing result, we provide a characterization of recurrence for composition operators on the Hardy space of Dirichlet series.

\begin{theorem}\label{dirichlet} 
Let $\phi$ be an analytic self-map of $\mathbb{C}_{\sfrac12}$ such that the composition operator $C_{\phi}:\mathcal{H}\rightarrow \mathcal{H}$ is continuous. Then $C_{\phi}$ is recurrent  if and only if $\phi(s)=s+it$ for some $t\in \mathbb{R}$. In particular, if $C_{\phi}$ is recurrent, then the operator is hyper-recurrent and $\hbox{Hr}(C_{\phi})$ is dense  $\mathcal{H}$.
\end{theorem}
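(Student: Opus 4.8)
The plan is to pass to the Gordon--Hedenmalm normal form $\phi(s)=c_{0}s+\varphi(s)$ with $c_{0}\in\mathbb{N}\cup\{0\}$ and $\varphi(s)=\sum_{n\geq1}c_{n}n^{-s}$, and then handle the three regimes $c_{0}\geq 2$, $c_{0}=1$, $c_{0}=0$. First I would dispose of the easy implication together with the ``in particular'' clause: if $\phi(s)=s+it$ then $C_{\phi}f(s)=\sum_{n}a_{n}n^{-it}n^{-s}$, so $C_{\phi}$ is invertible (its inverse is induced by $s\mapsto s-it$), it is a surjective isometry of the separable complex Hilbert space $\mathcal{H}$, and the monomials $n^{-s}$ are eigenvectors with unimodular eigenvalues $n^{-it}$ whose span is dense. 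Thus $C_{\phi}$ is an invertible operator with discrete spectrum, and Theorem~\ref{disc-esp} immediately gives that $C_{\phi}$ is hyper-recurrent with $\overline{\mathrm{Hr}(C_{\phi})}=\mathcal{H}$, hence in particular recurrent. It therefore remains only to prove that recurrence of $C_{\phi}$ forces $\phi(s)=s+it$.

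Assume $c_{0}\geq 1$. Then $C_{\phi}$ is a contraction (the Gordon--Hedenmalm contractivity remark), hence power-bounded, so recurrence together with Propositions~\ref{esp} and~\ref{prop:inverse-of-power-bounded} yields $\mathrm{Rec}(C_{\phi})=\mathcal{H}$ and that $C_{\phi}$ is a surjective isometry. Since $C_{\phi}^{n}(N^{-s})=N^{-\phi^{\circ n}(s)}$ is a Dirichlet series all of whose frequencies are $\geq N^{c_{0}^{n}}$, the case $c_{0}\geq 2$ gives $C_{\phi}^{n}(2^{-s})\perp 2^{-s}$ for every $n\geq 1$, so $2^{-s}$ is not recurrent, contradicting $\mathrm{Rec}(C_{\phi})=\mathcal{H}$; hence $c_{0}=1$. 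Writing $\varphi=c_{1}+\psi$ with $\psi=\sum_{n\geq2}c_{n}n^{-s}$ one has $\phi^{\circ n}(s)=s+nc_{1}+\rho_{n}(s)$ with $\rho_{n}$ a Dirichlet series of frequencies $\geq 2$, whence $\langle C_{\phi}^{n}(2^{-s}),2^{-s}\rangle=2^{-nc_{1}}$; recurrence of $2^{-s}$ forces $2^{-\omega_{j}\mathfrak{R}(c_{1})}\to1$ along some $\omega_{j}\uparrow\infty$, i.e. $\mathfrak{R}(c_{1})=0$, say $c_{1}=it$. If $\psi\not\equiv 0$, let $m\geq 2$ be minimal with $c_{m}\neq 0$; expanding $m^{-\phi^{\circ n}(s)}$ one finds that the coefficient of $(m^{2})^{-s}$ in $C_{\phi}^{n}(m^{-s})$ equals $-(\log m)\,c_{m}\,m^{-int}\sum_{j=0}^{n-1}m^{-ijt}$, so that $\|C_{\phi}^{n}(m^{-s})\|^{2}\geq 1+(\log m)^{2}|c_{m}|^{2}\bigl|\sum_{j=0}^{n-1}m^{-ijt}\bigr|^{2}$. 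As $C_{\phi}$ is an isometry and $\|m^{-s}\|=1$, this forces $\sum_{j=0}^{n-1}m^{-ijt}=0$ for all $n$, which fails for $n=1$. Hence $\psi\equiv 0$ and $\phi(s)=s+it$.

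The case $c_{0}=0$, i.e. $\phi(\mathbb{C}_{0})\subseteq\mathbb{C}_{\sfrac12}$, is where I expect the main obstacle. Here $\phi$ is non-constant (a rank-one operator on an infinite-dimensional space has nowhere dense recurrent set) and is not a Möbius automorphism of $\mathbb{C}_{\sfrac12}$ (a non-constant Möbius self-map of a half-plane is unbounded or decays only like $1/s$ as $\mathfrak{R}(s)\to\infty$, while a non-constant convergent Dirichlet series decays exponentially). The crucial step is to deduce from $\phi(\mathbb{C}_{0})\subseteq\mathbb{C}_{\sfrac12}$ that the Denjoy--Wolff point $\beta$ of $\phi$ lies in the interior $\mathbb{C}_{\sfrac12}$, so that $\phi^{\circ n}\to\beta$ locally uniformly. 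Granting this, if $f\in\mathcal{H}$ is recurrent along $\omega_{j}\uparrow\infty$, then the norm convergence $\|C_{\phi}^{\omega_{j}}f-f\|\to0$ and the boundedness of the point evaluations on $\mathcal{H}$ give $f(s)=\lim_{j}f(\phi^{\circ\omega_{j}}(s))=f(\beta)$ for every $s$, so $f$ is constant; thus $\mathrm{Rec}(C_{\phi})=\mathbb{C}\cdot 1$ is not dense and $C_{\phi}$ is not recurrent, a contradiction. The hard part is therefore the function-theoretic claim that $\beta$ is interior: $\beta\neq\infty$ is clear because $\phi(s)$ stays bounded as $\mathfrak{R}(s)\to\infty$, so orbits cannot escape, while excluding a finite boundary Denjoy--Wolff point requires a hyperbolic-geometry argument exploiting that $\phi$ maps $\mathbb{C}_{\sfrac14}$ into $\mathbb{C}_{\sfrac12}$, which lies uniformly in the interior of $\mathbb{C}_{\sfrac14}$ for the Poincaré metric, combined with normality of $\{\phi^{\circ n}\}$ to trap the limit inside $\mathbb{C}_{\sfrac12}$.
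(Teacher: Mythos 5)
Your forward direction and your treatment of the regimes $c_{0}\geq 1$ are correct and genuinely different from the paper's argument. The paper proves the whole converse in one stroke by citing Bayart's Lemma~11 (if $\phi(s)\neq s+it$ then $\phi(\mathbb{C}_{\sfrac12})\subset\mathbb{C}_{\sfrac12+\eta}$ for some $\eta>0$) and then exhibiting an explicit function $r2^{-s}$ with a whole neighbourhood disjoint from $\mathrm{Rec}(C_{\phi})$; for the case $\phi(s)=s+it$ it shows rigidity directly and invokes Theorem~\ref{Hr-H}, whereas you invoke Theorem~\ref{disc-esp} via the eigenbasis $\{n^{-s}\}$ --- both are fine. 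Your coefficient computations for $c_{0}\geq 2$ (orthogonality of $C_{\phi}^{n}(2^{-s})$ to $2^{-s}$) and for $c_{0}=1$ (forcing $\mathfrak{R}(c_{1})=0$ and then $\psi\equiv 0$ from the isometry property, which you correctly extract from Proposition~\ref{prop:inverse-of-power-bounded} together with the Gordon--Hedenmalm contractivity remark) are sound and arguably more self-contained than the paper's reliance on Bayart's lemma in that range.

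The case $c_{0}=0$, however, contains a genuine gap, and the repair you sketch does not work. You need the iterates $\phi^{\circ n}$ to converge locally uniformly to an \emph{interior} point of $\mathbb{C}_{\sfrac12}$, and you propose to get this from a Schwarz--Pick argument using that $\mathbb{C}_{\sfrac12}$ ``lies uniformly in the interior of $\mathbb{C}_{\sfrac14}$ for the Poincaré metric.'' It does not: the ratio of hyperbolic densities $\lambda_{\mathbb{C}_{\sfrac14}}(w)/\lambda_{\mathbb{C}_{\sfrac12}}(w)=(\mathfrak{R}(w)-\sfrac12)/(\mathfrak{R}(w)-\sfrac14)$ tends to $1$ as $\mathfrak{R}(w)\to\infty$, so the inclusion is not a uniform hyperbolic contraction and no positive lower bound on the displacement of orbits follows. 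Likewise, your exclusion of $\beta=\infty$ only rules out escape with $\mathfrak{R}\to\infty$, not escape with bounded real part and $|\Im|\to\infty$; and if the Denjoy--Wolff point were a finite boundary point, your point-evaluation argument collapses because $f$ need not extend continuously to $\partial\mathbb{C}_{\sfrac12}$. The missing ingredient is precisely the quantitative statement the paper imports from \cite{Bayart} (Lemma~11): $\phi(\mathbb{C}_{\sfrac12})\subset\mathbb{C}_{\sfrac12+\eta}$ for some $\eta>0$ whenever $\phi$ is not a vertical translation. Note that once you have this, you do not need convergence of the iterates at all: for every $n\geq1$ and every $s$ with $\mathfrak{R}(s)=\tfrac{1+\eta}{2}$ one has $|2^{-s}-2^{-\phi^{\circ n}(s)}|\geq 2^{-\frac{1+\eta}{2}}-2^{-(\frac12+\eta)}>0$, which is exactly how the paper produces an open set of non-recurrent vectors. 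You should either cite that lemma for the $c_{0}=0$ case or supply a correct proof of it; as written, this case is not established.
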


\begin{proof}
We {first} show that $C_{\phi}$ is not recurrent if $\phi(s)\neq s+it$.  Indeed,
if $\phi(s)\neq s+it$ for any $t\in \mathbb{R}$, then there exists $\eta>0$ such that $\phi(\mathbb{C}_{\sfrac12})\subset \mathbb{C}_{\sfrac{1}{2} + \eta}$ by Lemma 11 in  \cite{Bayart}. Let $r:=r(\eta)$ be given by
\begin{eqnarray*}
r=2+\frac{2^{\frac{1}{2} +\eta}(1+2(\zeta(1+\eta))^{1/2})}{2^{\frac{\eta}{2}}-1},
\end{eqnarray*}
where $\zeta$ represents the zeta function. Then, $f(s)=r2^{-s}\notin \overline{\hbox{Rec}(C_{\phi})}$ due to the following arguments. Suppose the contrary. Then there exists a recurrent vector $g(s)=\sum_{1}^{\infty}a_{n}n^{-s}$ with $\Vert{g-f\Vert}<1$. Thus, we have
\begin{eqnarray*}
r-1 < \vert{a_{2}\vert},\quad
\sum_{n\neq 2}\vert{a_{n}\vert}^{2} < 1.
\end{eqnarray*}
Furthermore, $g(\phi^{\omega_{k}}(\frac{1+\eta}{2}))$ converges to $g(\frac{1+\eta}{2})$ as $k\rightarrow \infty$ for some $\omega=(\omega_{k})\in \mathfrak{C}$. However, for each $\ell\in \mathbb{N}$,
\begin{eqnarray*}
\left\vert{\sum_{n=1}^{\infty}\frac{a_{n}}{n^{\frac{1+\eta}{2}}}-\sum_{n=1}^{\infty}\frac{a_{n}}{n^{\phi^{\ell}(\frac{1+\eta}{2})}}}\right\vert &\geq& \vert{a_{2}\vert}\left\vert{\frac{1}{2^{\frac{1+\eta}{2}}}-\frac{1}{2^{\phi^{\ell}(\frac{1+\eta}{2})}}}\right\vert-\sum_{n\neq 2}\left(\frac{\vert{a_{n}}\vert}{n^{\frac{1+\eta}{2}}}+\frac{\vert{a_{n}}\vert}{n^{\mathfrak{R}(\phi^{\ell}(\frac{1+\eta}{2}))}}\right)\\
{} &\geq& (r-1)\left(\frac{1}{2^{\frac{1+\eta}{2}}}-\frac{1}{2^{\frac{1}{2}+\eta}}\right)-2 (\zeta(1+\eta)^{1/2}) > 1,\\
\end{eqnarray*}
which is a contradiction.

Hence, we assume from now on that  $\phi(s)=s+it$. In particular, as it easily can be verified, $C_\phi$ then is a surjective isometry.
To proceed, we select a strictly increasing sequence $\omega=(\omega_{k})_{k\in \mathbb{N}}$ such that
\begin{eqnarray*}
n^{it\omega_{k}}\xrightarrow[n\rightarrow \infty]{} 1
\end{eqnarray*}
for each $n\geq 2$ and aim to show that $\mathfrak{L}(\omega)=\mathcal{H}$. So assume that $f(s)=\sum_{n=1}^{\infty} {a_{n}}{n^{-s}}$ is in $\mathcal{H}$.
Then
\begin{eqnarray*}
\Vert{C_{\phi}^{\omega_{k}}f-f\Vert}^2=\sum_{n=1}^{\infty}\vert{a_{n}\vert}^{2}\vert{n^{-it\omega_{k}}-1\vert}^{2}\xrightarrow[k\rightarrow \infty]{} 0.
\end{eqnarray*}
Hence, $\mathrm{Rec}(C_\phi) = \mathcal{H}$. It now follows from Theorem \ref{Hr-H} that $C_{\phi}$ is hyper-recurrent and $\hbox{Hr}(C_{\phi})$ is dense.
\end{proof}

The following statement is a consequence of the above proof as an isometry never is  hypercyclic. For an alternative proof, we refer to \cite[Proposition 4.1]{Bayartcomp}.
\begin{corollary}
No composition operator on $\mathcal{H}$ is hypercyclic.
\end{corollary}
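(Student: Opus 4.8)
The plan is to reduce the statement to the characterization of recurrence obtained in Theorem \ref{dirichlet} together with the implication $\hbox{hypercyclic} \Rightarrow \hbox{recurrent}$ recorded in the introduction. If $C_{\phi}$ is not recurrent, then it is certainly not hypercyclic, so the only case that needs to be treated is the one in which $C_{\phi}$ is recurrent. By Theorem \ref{dirichlet}, in that case $\phi(s)=s+it$ for some $t\in\mathbb{R}$ and, as was observed in its proof, the operator $C_{\phi}$ is then a surjective isometry of $\mathcal{H}$.

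It therefore remains to observe that a surjective isometry of $\mathcal{H}$ can never be hypercyclic. This is immediate: if $T$ is an isometry, then $\Vert T^{n}f\Vert=\Vert f\Vert$ for every $f\in\mathcal{H}$ and every $n\geq 0$, so the orbit $\{T^{n}f:n\geq 0\}$ is contained in the sphere $\{g\in\mathcal{H}:\Vert g\Vert=\Vert f\Vert\}$, which is a proper closed subset of $\mathcal{H}$ (it does not contain $0$ unless $f=0$). Hence no orbit is dense, and in particular $C_{\phi}$ has no hypercyclic vectors.

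No genuine obstacle arises in this argument; the corollary is a direct consequence of Theorem \ref{dirichlet}. For an alternative route one can avoid the classification altogether: if $\phi(s)\neq s+it$ for all $t\in\mathbb{R}$, then the explicit function $f(s)=r\,2^{-s}$ constructed in the proof of Theorem \ref{dirichlet} already lies outside $\overline{\mathrm{Rec}(C_{\phi})}$, hence outside the closure of every orbit, while if $\phi(s)=s+it$ one again invokes that $C_{\phi}$ is an isometry. Either way, $C_{\phi}$ fails to admit a dense orbit.
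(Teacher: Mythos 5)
Your proposal is correct and follows exactly the paper's route: the paper derives the corollary from Theorem \ref{dirichlet} by noting that a non-recurrent operator cannot be hypercyclic and that in the remaining case $\phi(s)=s+it$ the operator is a surjective isometry, which is never hypercyclic. You merely spell out the standard sphere argument for why isometries have no dense orbits, which the paper leaves implicit.
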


We conclude the paper with the following open problems.
\begin{question}
Let $X$ be an infinite-dimensional separable Banach space. Is it true that for any quasi-rigid operator $T$ on $X$, either $T$ is hyper-recurrent or $\eta(T) = \infty$?
\end{question}

\begin{question}
   Is $\mathrm{Hr}(T)$ always dense in $X$ if $T$ is hyper-recurrent on a separable Fréchet space $X$?
\end{question}

\subsection*{Acknowledgements}

The first author was partially supported by CAPES and CNPq-Brazil. The second author acknowledges financial support from CNPq-Brazil and the Fundação Carlos Chagas Filho de Amparo à Pesquisa do Estado do Rio de Janeiro (FAPERJ) through grant E-26/210.388/2019.


\end{document}